 \newcommand{\mb}{\mathbb}
 \newcommand{\mc}{\mathcal}
 \newcommand{\f}{\frac}
 \newcommand{\ld}{\lambda}
 \newcommand{\rt}{\sqrt}
 \newcommand{\fa}{\forall}
 \newcommand{\dd}{\partial}
 \newcommand{\tb}{\textbf}
 \newcommand{\nn}{\nonumber}
 \newcommand{\ep}{\epsilon}
 \newcommand{\qd}{\quad}
 \newcommand{\Ld}{\Lambda}
 \newcommand{\iy}{\infty}
 \newcommand{\tn}{\textnormal}
 \newcommand{\py}{  \partial_{x_{n+1}}^a} 
 \newtheorem{theorem}{Theorem}[section]
 \newtheorem{lemma}[theorem]{Lemma}
 \newtheorem{proposition}[theorem]{Proposition}
 \newtheorem{definition}[theorem]{Definition}
 \numberwithin{equation}{section}
 \newcommand{\PD}{\partial}
 \newcommand{\Rn}{\mathbb{R}^n}
 \newcommand{\bp}{\begin{prob}}
 	\newcommand{\bpr}{\begin{proof}}
 		\newcommand{\epr}{\end{proof}}
 	\renewcommand{\d}{\mathrm{d}}
 	\newcommand{\Rb}{\mathbb{R}}
 	\newcommand{\Hc}{\mathcal{H}}
 	\renewcommand{\O}{\Omega}
	\newcommand{\ch}{\mb{H}^s_{\overline{Q}}}
	\newcommand{\chd}{(\mb{H}^s_{\overline{Q}})^* }	\usepackage{amsmath}
 	\theoremstyle{definition}
 	\title[A fractional parabolic inverse problem]{The Calder\'on problem for  space-time fractional parabolic operators with variable coefficients}  
 	\author[Banerjee, Krishnan and Senapati] {Agnid Banerjee$^{*}$ and Soumen Senapati$^{*}$}
 	\address{$^{*}$	TIFR Centre for Applicable Mathematics, Bangalore 560065, India. 
		\newline\indent E-mail:{\tt \ agnid@tifrbng.res.in, soumen@tifrbng.res.in }}
\thanks{AB is supported  by Department of Atomic Energy,  Government of India, under
Project No.  12-R \& D-TFR-5.01-0520.}
\begin{document}
		\begin{abstract}
		We study an inverse problem for  variable coefficient fractional parabolic operators of the form  $(\PD_t -\mbox{div}(A(x) \nabla_x))^s + q(x,t)$ for $s\in(0,1)$ and show the unique recovery of $q$ from exterior measured data. Similar to the fractional elliptic case, we use Runge type approximation  argument which is obtained via  a global weak unique continuation property. The proof of such a unique continuation result   involves a new Carleman estimate for the associated variable coefficient extension operator. In the latter part of the work, we  prove analogous unique determination results for fractional parabolic operators with  drift.
	\end{abstract}

	\subjclass{35A02, 35B60, 35K05}  	
  \maketitle

  \tableofcontents

 	\section{Introduction and statement of the main results}
 	
 	Let $\O$ be a domain in $\Rb^n$ and let $T>0$. Let $A(x)$ be a positive definite $n\times n$ matrix on $\O$ with Lipschitz coefficients. We denote by  $\Hc=\PD_t -\mbox{div}(A(x) \nabla_x)$ the parabolic operator in $\Rb^{n+1}$, and for $s\in(0,1)$, by $\Hc^{s}$ the fractional parabolic operator. In this article, we study two inverse problems associated to this fractional parabolic operator, which we now proceed to describe precisely.  
 	
 	Let us denote the cylindrical domain $\O\times (-T,T)$ by $Q$ and the exterior domain $\O_{e}\times (-T,T)$ by $Q_{e}$ where $\O_{e}=\Rb^{n}\setminus \overline{\O}$.
 	
 	Let the potential term $q\in L^{\infty}(Q)$. We consider the initial-exterior problem
 	\begin{align}\label{ini-ext prob}
 		\begin{cases}
 			\left( \mc{H}^s + q(x,t) \right)u = 0, & \tn{ in } Q\\
 			u(x,t) = f(x,t), & \tn{ in } Q_e \\ 
 			u(x,t) = 0, & \tn{ for } t\leq -T.
 		\end{cases}
 	\end{align}
 	%
 	We will assume that
 	
 	\begin{equation}\label{asus}
 		\text{ $0$ is not a Dirichlet eigenvalue for \eqref{ini-ext prob}.}
 	\end{equation}
 	We  define the nonlocal Dirichlet to Neumann (DN)  map as follows
 	\begin{align}\label{DN map}
 		\Ld_q : \left.u\right|_{Q_e} \to \left.\mc{H}^s u\right|_{Q_e}
 	\end{align}

Our first result is that  one can recover the potential term $q$ in $Q$ uniquely given the nonlocal DN map. 
 	
 	Next we consider a fractional parabolic problem involving a first order term as well. For $ q \in L^\iy(Q) $ and $ b \in L^\iy \left((-T,T);W^{1-s,\iy}(\Omega)\right) $, we consider the initial-exterior problem 
 	\begin{align}\label{ini-ext prob for b,q}
 		\begin{cases}
 			\left( \mc{H}^s + \langle b(x,t), \nabla_{x} \rangle + q(x,t) \right)u = 0, & \tn{ in } Q\\
 			u(x,t) = f(x,t), & \tn{ in } Q_e\\
 			u(x,t) = 0, & \tn{ for } t \leq-T.
 		\end{cases}
 	\end{align}
 	  As before, we assume that
 	\begin{equation}\label{kju2}
 		\text{$0$ is not a Dirichlet eigenvalue for \eqref{ini-ext prob for b,q}}.
 		\end{equation}
 	and define the nonlocal parabolic DN map 
 	\begin{align}\label{DN map for b,q}
 		\Ld_{b,q}: u\vert_{Q_e} \to \mc{H}^s u\vert_{Q_e} .
 	\end{align}

Our second result is that one can uniquely recover the coefficients $b$ and $q$ from the data $\Ld_{b,q}$.



    

  We now give a brief survey of local and non-local versions of the Calder\'on inverse problem in the elliptic and parabolic settings. Calder\'on initiated the study in this direction in his fundamental article \cite{Calderon_Paper}, where he asked the question whether one can determine the conductivity of a medium from boundary Dirichlet to Neumann data, and gave some partial answers. 
  This work served as the initial impetus for several deep and insightful works in the context of elliptic inverse problems; see \cite{SYL, Nachman_annals, AP, Ch, KEN}. The problem of unique determination of the conductivity from boundary Dirichlet to Neumann map is typically transformed to an inverse problem for the Schr\"odinger equation, that is an equation of the type $(-\Delta +q)$, from the corresponding Dirichlet to Neumann map. The method of complex geometric optics  (CGO) solutions has served as a crucial ingredient in the proofs of these inverse problems.  This has proven versatile to be effective in the solution of several inverse problems involving PDEs. It is not our intention to give a broad survey of exisiting results in inverse problems for PDEs and for this reason we limit ourselves to those problems whose fractional analogues we study in this paper.  Analogous to the case of the Schr\"odinger equation, an inverse problem for the magnetic Schr\"odinger equation, $\sum\limits_{i=1}^{n}\left( \frac{1}{i} \frac{\partial}{\partial x_j} + W_j\right)^2 + q(x)$,  was studied in \cite{Sun_Paper} under a smallness assumption on the first order term $W$, and removing this assumption later in \cite{NSU}. However, in this situation, the inverse problem exhibits a phenomenon of gauge invariance, that is, there is an obstruction to uniquely recovering the first order term from boundary Dirichlet to Neumann data. Inverse problems for parabolic equations have been studied extensively as well. We refer to the following initial works in this context;  \cite{Isakov, Avdonin_Seidman}.

  In recent years, study of inverse problems involving fractional powers of local operators has been gaining significant attention. The work in this direction for the fractional Laplacian involving a zeroth order term was initiated by \cite{GSU}. The results in \cite{GSU} were subsequently extended to variable coefficient operators with smooth principal part in  \cite{GLX}. An inverse problem for the fractional Laplacian with both zeroth and first order term was recently considered in \cite{CLR}; see also \cite{BGU} for a related work. We should mention the important feature that unlike the local case, the phenomenon of gauge invariance disappears in the nonlocal framework. Moving on to fractional analogoues of the parabolic operator, an inverse problem for a fractional parabolic operator  of the form $(\partial_t - \Delta)^s +q$   was recently considered in \cite{LLR}. Two related works with slightly different fractional parabolic operators are \cite{RS,LL}.  
  
  In this work, our main focus  is the unique determination of the potential and the drift term from the nonlocal DN map for more general operators of the type $(\partial_t - \operatorname{div}( A(x)\nabla))^s$ where $A$ is assumed to be Lipschitz continuous. Our results therefore  generalize those in \cite{LLR} as well as those in the elliptic case in  \cite{GLX} where, instead, smooth coefficients are considered.
  
  \subsection{Main results}
  We now proceed to give the main results of the article.    Our first main result concerns the unique determination of the potential $q$.
  \begin{theorem}\label{determining q}  
    Let $T>0$ and $\Omega \subset \mb{R}^n, n\ge 1$ be an open bounded set. Consider $q_1,q_2 \in L^{\iy}(Q)$ and any two nonempty open sets in $\Omega_e$ say $W_1$ and $W_2$ such that 
    \begin{align*}
       \left. \Ld_{q_1}(f)\right|_{W_2 \times (-T,T)} =  \left. \Ld_{q_2}(f)\right|_{W_2 \times (-T,T)}, \ \tn{ for all } f \in C_0^{\iy}(W_1 \times (-T,T))
    \end{align*}
    then $q_1 = q_2$ in $Q$.
  \end{theorem}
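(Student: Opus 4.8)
The plan is to follow the by-now standard Runge approximation strategy pioneered in \cite{GSU} and adapted to the parabolic fractional setting in \cite{LLR}, the novelty here being that all the analytic input (the Caffarelli–Silvestre type extension, the unique continuation property, and the Runge density) must be available for the variable-coefficient parabolic operator $\mc{H}=\PD_t-\mbox{div}(A(x)\nabla_x)$. First I would record the integral identity: for $u_j$ the solution of \eqref{ini-ext prob} with potential $q_j$ and exterior data $f_j\in C_0^\iy(W_1\times(-T,T))$, a direct computation using the symmetry of the bilinear form associated to $\mc{H}^s$ (together with the self-adjointness of $\mc{H}^s$ in the relevant weighted spaces) gives
\begin{equation*}
\langle (\Ld_{q_1}-\Ld_{q_2})f_1, f_2\rangle = \int_Q (q_1-q_2)\, u_1\, u_2 \,\d x\,\d t,
\end{equation*}
where $u_2$ solves the problem with $q_2$ and exterior data $f_2\in C_0^\iy(W_2\times(-T,T))$. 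Hence the hypothesis that the DN maps agree on $W_2\times(-T,T)$ for all $f_1$ supported in $W_1\times(-T,T)$ forces $\int_Q (q_1-q_2)u_1u_2=0$ for all such solution pairs.

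Next I would invoke the Runge approximation property: the set of restrictions to $Q$ of solutions $u_j$ of \eqref{ini-ext prob} (with $q_j$), as $f_j$ ranges over $C_0^\iy(W_j\times(-T,T))$, is dense in a suitable function space on $Q$ (e.g. $L^2(Q)$, or the natural energy space). This density is the crux of the argument and is exactly where the new Carleman estimate and the resulting global weak unique continuation property (UCP) for the variable-coefficient extension operator enter: by a Hahn–Banach/duality argument, density of the solution set is equivalent to the statement that if $v\in Q$ is orthogonal to all such solutions then $v=0$, and this orthogonality translates (via the adjoint problem) into the vanishing of a solution together with its ``nonlocal Neumann data'' $\mc{H}^s$ in the exterior set $W_j$, whence the UCP for $\mc{H}^s$ — proved through the degenerate-parabolic extension and the Carleman estimate advertised in the abstract — yields $v\equiv 0$. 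Granting this, one approximates in $L^2(Q)$ an arbitrary pair $(g_1,g_2)$ by $(u_1,u_2)$, passes to the limit in $\int_Q(q_1-q_2)u_1u_2=0$ to obtain $\int_Q (q_1-q_2)g_1g_2=0$ for all $g_1,g_2$, and concludes $q_1=q_2$ a.e.\ in $Q$ by choosing, say, $g_1$ an approximate identity and $g_2\equiv 1$ on compact subsets.

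The main obstacle, and the part requiring genuinely new work, is the global weak unique continuation property for $\mc{H}^s$ with Lipschitz $A(x)$. In the constant-coefficient case one can use the explicit heat-kernel/Poisson structure of the Caffarelli–Silvestre extension; here instead one must work with the variable-coefficient degenerate parabolic extension in $\Rb^{n+1}_+\times(-T,T)$ with the weight $x_{n+1}^a$, $a=1-2s$, establish well-posedness and the correct trace/flux identification of $\mc{H}^s$, and then prove that a solution of the extension problem that vanishes to infinite order (in the appropriate weighted sense) on an open subset of the thin exterior set must vanish identically. This is precisely the role of the new Carleman estimate: one needs a Carleman inequality for the operator $\mbox{div}(A(x)\nabla)+\PD_{x_{n+1}}(x_{n+1}^a\PD_{x_{n+1}}) - x_{n+1}^a\PD_t$ (or its backward counterpart) that is robust under only Lipschitz regularity of $A$ and that is strong enough near the degenerate hyperplane $\{x_{n+1}=0\}$ to absorb the parabolic scaling. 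Once that estimate and the attendant doubling/vanishing-order machinery are in place, the UCP, Runge density, and the determination of $q$ follow along the lines sketched above; I would therefore structure the paper so that the Carleman estimate is proved first as a standalone analytic result, then the UCP and Runge approximation are deduced, and Theorem~\ref{determining q} is obtained as a corollary of the integral identity plus density.
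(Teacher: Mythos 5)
Your overall architecture coincides with the paper's: an Alessandrini-type integral identity, a Runge approximation theorem proved by Hahn--Banach plus the global weak unique continuation property of Theorem \ref{GUCP}, and a limiting argument in $L^2(Q)$ to conclude $q_1=q_2$. However, there is a concrete flaw in the way you justify the integral identity. You invoke ``the symmetry of the bilinear form'' and ``the self-adjointness of $\mc{H}^s$'', and accordingly take $u_2$ to be a solution of the same forward problem \eqref{ini-ext prob} with potential $q_2$ and data $f_2$. For the parabolic fractional operator this is false: $\mc{H}^s$ is not self-adjoint, its adjoint is $\mc{H}^s_*$ (the fractional power of the backward parabolic operator), and the bilinear form $\mc{B}_q(f,g)=\langle \mc{H}^{s/2}f,\mc{H}_*^{s/2}g\rangle+\int_Q qfg$ is not symmetric in $f$ and $g$. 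Consequently the identity $\langle(\Ld_{q_1}-\Ld_{q_2})f_1,f_2\rangle=\int_Q(q_1-q_2)u_1u_2$ with two forward solutions does not hold; as in Lemma \ref{int identity for q}, the second factor must be a solution $u^*_{f_2}$ of the adjoint \emph{future-exterior} problem \eqref{adjoint problem for q}, i.e.\ $(\mc{H}^s_*+q_2)u^*=0$ with $u^*=f_2$ in $Q_e$ and $u^*=0$ for $t\ge T$, and one must first define the adjoint DN map and check $\langle \Ld_{q_2}f_1,f_2\rangle=\mc{B}_{q_2}(f_1,u^*_{f_2})$.

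This is not merely cosmetic: it propagates into the density step. You need Runge approximation not only for the forward family but also for the adjoint family of solutions (so as to approximate the second factor $g_2$), which requires well-posedness of \eqref{adjoint problem for q} and a unique continuation statement for $\mc{H}^s_*$; the paper obtains the latter from Theorem \ref{GUCP} via the time reversal $\tilde u(x,t)=u(x,-t)$, see \eqref{k}. With these corrections your argument closes exactly as in the paper (the paper even notes that $L^2(Q)$-density suffices for Theorem \ref{determining q}, the stronger $\ch$-density being needed only for the drift case), but as written the self-adjointness step would fail.
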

   We also uniquely recover the coefficients $b,q$ for \eqref{ini-ext prob for b,q} given the nonlocal DN map. The following result below is the parabolic generalization  of Theorem 1.1  in \cite{CLR}. 
   
  \begin{theorem}\label{determining b,q}
  Let $T>0$ and $\Omega \subset \mb{R}^n, n\ge 1$ be an open bounded Lipschitz set. Consider $q_1,q_2 \in L^{\iy}(Q)$ and  $ b_1, b_2 \in L^2\left((-T,T); W^{1-s,\iy}(\Omega)\right)$. We further choose two nonempty open sets from $\Omega_e$ say $W_1$ and $W_2$ such that 
    \begin{align*}
       \left. \Ld_{b_1,q_1}(f)\right|_{W_2 \times (-T,T)} =  \left. \Ld_{b_2,q_2}(f)\right|_{W_2 \times (-T,T)}, \ \tn{ for all } f \in C_0^{\iy}(W_1 \times (-T,T))
    \end{align*}
    then $ b_1 = b_2, \ q_1 = q_2$ in $Q$.
  \end{theorem}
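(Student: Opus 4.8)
The plan is to run the now-standard scheme for fractional Calder\'on problems — pass to an Alessandrini-type integral identity, upgrade it by a Runge approximation resting on a global weak unique continuation property (UCP), and then read off the coefficients — while carrying the extra weight of the first order term $\langle b,\nabla_x\rangle$, which has order $1$ and hence is \emph{not} subordinate to the order $2s$ of $\Hc^s$ when $s\le\tfrac12$; this is exactly why $b_j$ is taken in $L^2\big((-T,T);W^{1-s,\infty}(\Omega)\big)$.

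\emph{Step 1 (well-posedness and the integral identity).} Using the Caffarelli--Silvestre-type extension of the variable coefficient operator $\Hc^s$, together with the realization of $\Hc^s u$ as a weighted normal derivative of that extension, I would first establish that, under \eqref{kju2}, the forward problem \eqref{ini-ext prob for b,q} and its backward-in-time formal adjoint are well posed in the natural fractional parabolic energy spaces and that $\Lambda_{b,q}$ from \eqref{DN map for b,q} is bounded. Here the $W^{1-s,\infty}$ hypothesis on $b$ is precisely what turns $\langle b,\nabla_x u\rangle$ into a legitimate element of the dual space when $u$ only enjoys $H^s$ regularity, while the weak form of the adjoint problem sees $b$ only through $b\,v\,\nabla_x w$ and so tolerates the absence of classical derivatives of $b$. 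A routine computation with the associated bilinear forms then yields, for $u_1$ solving \eqref{ini-ext prob for b,q} with coefficients $(b_1,q_1)$ and datum $f_1\in C_0^\infty(W_1\times(-T,T))$ and $\tilde u_2$ solving the adjoint problem with coefficients $(b_2,q_2)$ and datum $f_2\in C_0^\infty(W_2\times(-T,T))$,
\[
\big\langle(\Lambda_{b_1,q_1}-\Lambda_{b_2,q_2})f_1,\,f_2\big\rangle=\int_Q\Big(\langle(b_1-b_2),\nabla_x u_1\rangle+(q_1-q_2)\,u_1\Big)\,\tilde u_2\,\d x\,\d t ,
\]
and the hypothesis on the DN maps makes the left-hand side vanish for all such $f_1,f_2$.

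\emph{Step 2 (unique continuation and Runge approximation).} I would next invoke the global weak UCP for $\Hc^s$ — the consequence of the new Carleman estimate for the variable coefficient extension operator, the same UCP underpinning Theorem~\ref{determining q}, and the place where the Lipschitz regularity of $A$ is felt — namely that $v$ and $\Hc^s v$ vanishing on a nonempty open subset of $Q_e$ force $v\equiv0$. Feeding this into a Hahn--Banach/duality argument against the appropriate dual problems gives the Runge property: the families $\{u_1|_Q\}$ and $\{\tilde u_2|_Q\}$, as $f_1$ and $f_2$ range over $C_0^\infty(W_1\times(-T,T))$ and $C_0^\infty(W_2\times(-T,T))$ respectively, are dense in the relevant function space on $Q$, and moreover in a topology strong enough — once more through the $W^{1-s,\infty}$-regularity of $b$ and interior estimates for the degenerate extension — that the bilinear expression on the right of the displayed identity is continuous along the approximating sequences. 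Choosing $u_1\to v$ and $\tilde u_2\to w$ for arbitrary $v,w\in C_0^\infty(Q)$ and passing to the limit produces
\[
\int_Q\Big(\langle(b_1-b_2),\nabla_x v\rangle+(q_1-q_2)\,v\Big)\,w\,\d x\,\d t=0\quad\text{for all }v,w\in C_0^\infty(Q),
\]
whence, fixing $v$ and varying $w$, the pointwise relation $\langle(b_1-b_2)(x,t),\nabla_x v\rangle+(q_1-q_2)(x,t)\,v=0$ a.e.\ in $Q$, for every $v\in C_0^\infty(Q)$.

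\emph{Step 3 (separating the coefficients; the main obstacle).} Applying this relation to $v$ and to $v^2$ and subtracting, after multiplying the first by $v$, eliminates the zeroth order term and leaves $v\,\langle(b_1-b_2),\nabla_x v\rangle=\tfrac12\langle(b_1-b_2),\nabla_x(v^2)\rangle=0$ a.e.\ in $Q$ for every $v\in C_0^\infty(Q)$; running this over a countable family of $v$'s whose $x$-gradients span $\Rb^n$ at each point of $Q$ forces $b_1=b_2$ a.e.\ in $Q$, after which the relation itself forces $q_1=q_2$ a.e.\ in $Q$. That this purely nonlocal argument succeeds — with no gauge obstruction of the kind that blocks recovery of the drift in the local problem — is the whole point. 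The genuinely delicate part, as I see it, is not this final algebra but Steps~1--2 at the meeting of the order-$1$ drift with the order-$2s$ operator: making the adjoint problem well posed, $\Lambda_{b,q}$ well defined, and above all the Alessandrini identity stable under the Runge approximation when $2s\le1$, all of which must be extracted from the $W^{1-s,\infty}$-regularity of $b$ together with suitable regularity of solutions of the degenerate extension. This is the parabolic, variable coefficient analogue of the functional analytic work in \cite{CLR}, carried out on top of the new Carleman estimate that also drives Theorem~\ref{determining q}.
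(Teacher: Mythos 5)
Your proposal is correct, and its analytic core is exactly the paper's: well-posedness of the forward and adjoint drift problems under \eqref{kju2} (which, as in the paper's Theorem \ref{well-posedness theorem for b,q} and the estimate \eqref{bdo}, implicitly requires $s>\tfrac12$ so that the Kato--Ponce bound makes the drift bilinear form continuous on $\mb{H}^s\times\mb{H}^s$), the Alessandrini identity of Lemma \ref{int identity for b and q}, the Runge approximation of Theorem \ref{Runge for b and q} in the topology of $\ch$, and the continuity of the right-hand side of the identity under $\ch$-convergence to pass to the limit. Where you diverge is only the endgame. The paper separates the coefficients sequentially through special approximation targets: on the forward side it approximates a cutoff $\psi\in C_0^\infty(Q)$ with $\psi\equiv1$, hence $\nabla_x\psi\equiv0$, on $\operatorname{supp}\phi$, which kills the drift contribution in the limit and gives $q_1=q_2$; then, with $q_1=q_2$ known, it approximates (cut-off) coordinate functions $x_k$ to extract $(b_1-b_2)_k$. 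You instead take arbitrary limits $v,w\in C_0^\infty(Q)$, obtain $\langle (b_1-b_2),\nabla_x v\rangle+(q_1-q_2)v=0$ a.e.\ for every $v$, and separate via the $v$ versus $v^2$ cancellation. This works and condenses the two rounds of special choices into one limiting identity, but you should add one precision: the cancellation yields $v\,\langle (b_1-b_2),\nabla_x v\rangle=0$, which is vacuous at points where $v=0$, so a countable family of $v$'s with spanning gradients is not enough by itself; you must also arrange $v\neq0$ where the gradient information is used (for instance $v=(x_k+c)\chi$ with $\chi$ a bump and $c$ chosen so that $x_k+c\neq0$ on $\operatorname{supp}\chi$), or equivalently first recover $q$ from bumps with $v\equiv1$ locally and then $b$ from coordinate functions, which is precisely the paper's route.
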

  The proofs of our main results, Theorem \ref{determining q} and Theorem \ref{determining b,q}, crucially rely on a global weak unique continuation property  for $\Hc^{s}$ (see Theorem \ref{GUCP} below), and substantial parts of this article rest on proving this result. In the following result, for the fractional parabolic space $\mb{H}^s(\mb{R}^{n+1})$, we refer to the definition in \eqref{fpSS}.


  %
  \begin{theorem}\label{GUCP}
    Let $T>0$ and ${U}$ be a nontrivial open set in $\mb{R}^n,n\ge 1$. For some $u\in \mb{H}^s(\mb{R}^{n+1})$, if
    \begin{align*}
     u = \mc{H}^s u = 0 \tn{ in } {U} \times (-T,T),
    \end{align*}
    then $u=0$ in $\mb{R}^n \times (-T,T)$.
  \end{theorem}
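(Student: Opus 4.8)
The strategy follows the now-standard route for nonlocal unique continuation via the Caffarelli--Silvestre-type parabolic extension: realize $\Hc^s u$ as a weighted normal derivative of a solution $U(x,t,y)$ to a degenerate parabolic PDE in the half-space $\Rn\times(-T,T)\times(0,\infty)$, so that the vanishing of $u$ and $\Hc^s u$ on ${U}\times(-T,T)$ becomes the vanishing of $U$ and its weighted conormal derivative on a piece of the thin manifold $\{y=0\}$. One then applies a strong unique continuation principle from the boundary to conclude $U\equiv 0$ in a neighbourhood of that piece, propagates the vanishing by a connectedness/space-like argument to all of $\{y=0\}$, and reads off $u=0$ on $\Rn\times(-T,T)$.

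First I would recall the extension characterization: for $u\in\mb{H}^s(\mb{R}^{n+1})$ there is a unique $U$ solving the extension problem
\begin{align*}
 y^a\big(\PD_t - \mbox{div}_x(A(x)\nabla_x)\big)U + \PD_y\big(y^a\PD_y U\big) = 0 \quad\text{in }\Rn\times(-T,T)\times(0,\infty),\qquad U(\cdot,\cdot,0)=u,
\end{align*}
with $a=1-2s$, and $\Hc^s u = -c_s\lim_{y\to 0^+} y^a\PD_y U$ in the appropriate weak sense; this reduction is exactly the variable-coefficient analogue used in \cite{LLR,GLX}, and the Lipschitz regularity of $A$ is what makes it available. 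Thus on ${U}\times(-T,T)\times\{0\}$ both $U=0$ and $y^a\PD_y U=0$. Next I would even-reflect across $\{y=0\}$ (using the Neumann condition that the weighted normal derivative vanishes there) to get a solution of a degenerate parabolic equation in a full two-sided neighbourhood of that boundary patch, which vanishes on an open subset of a hyperplane.

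The heart of the matter is then a \emph{strong unique continuation at the thin boundary} for the degenerate parabolic operator with $A_2$-Muckenhoupt weight $|y|^a$ and Lipschitz leading coefficient $A(x)$. I would invoke (or prove, by a Carleman-estimate argument — this is precisely the ``new Carleman estimate for the associated variable coefficient extension operator'' advertised in the abstract) that a solution vanishing to infinite order along ${U}\times(-T,T)\times\{0\}$ must vanish in a neighbourhood. Propagating from there: fix $t$ in a suitable time interval and use a space-like unique continuation / Runge-type chaining in the $x$-variable to spread the zero set from ${U}$ to all of $\Rn$ at each such time; the parabolic nature (finite speed of propagation is \emph{not} available, but backward uniqueness and the cylindrical geometry are) lets one cover the whole slab $\Rn\times(-T,T)$. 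Finally, restricting the vanishing of $U$ back to $\{y=0\}$ gives $u=0$ on $\Rn\times(-T,T)$, as claimed.

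The main obstacle I anticipate is the Carleman estimate itself: one needs a weight function adapted simultaneously to the degeneracy $|y|^a$ at the thin boundary, to the parabolic scaling (which forces a genuinely anisotropic, time-dependent weight rather than the elliptic Gaussian-type weights of \cite{Ruland,GLX}), and to the mere Lipschitz regularity of $A(x)$ — so the usual pseudoconvexity computation must absorb first-order error terms $|\nabla A|$ by the large Carleman parameter, and the boundary terms generated by integration by parts in $y$ must be shown to have a favourable sign. Getting a clean estimate that is uniform up to $y=0$ and strong enough to yield infinite-order vanishing (not merely weak-type unique continuation) is the technically delicate step; everything else (the extension reduction, the reflection, and the connectedness propagation in $(x,t)$) is comparatively routine given the analogous elliptic and constant-coefficient parabolic treatments.
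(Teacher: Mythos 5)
Your overall architecture (extension, even reflection across the thin set, a Carleman estimate for the degenerate variable--coefficient extension operator, then spreading of the zero set) matches the paper's, but there is a genuine gap at the decisive step: how to pass from the zero thin Cauchy data, i.e.\ $U=0$ and $z^a\partial_z U=0$ on the patch where $u$ and $\mathcal{H}^s u$ vanish, to the vanishing of the extension in a \emph{thick} neighbourhood. You propose to invoke that ``a solution vanishing to infinite order along the thin patch must vanish in a neighbourhood'', but infinite-order vanishing is not part of the hypothesis; it must be derived from the zero Cauchy data. In the constant-coefficient case of \cite{LLR} this derivation proceeds by repeatedly differentiating the extension equation and bootstrapping (or, in \cite{BG1}, by space-analyticity), and both devices break down here precisely because $A$ is only Lipschitz: the equation cannot be differentiated repeatedly in $x$. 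Your proposal in fact inverts the logic at one point, asking the Carleman estimate to be ``strong enough to yield infinite-order vanishing'', whereas infinite-order vanishing is the \emph{input} to a Carleman-based strong unique continuation argument, not its output. As written, the plan therefore contains no mechanism for the weak unique continuation from the thin set under Lipschitz coefficients, which is exactly the new difficulty this theorem addresses.

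The paper resolves this differently: the Carleman estimate (Lemma \ref{carl}, of Escauriaza--Fern\'andez--Vessella type) is not used directly for unique continuation from the thin boundary, but is combined with a monotonicity-in-time estimate (Lemma \ref{mont}) to yield a conditional doubling inequality (Theorem \ref{db1}); a blow-up/compactness argument, using the regularity estimates of Lemma \ref{reg} and Lemma \ref{reg2}, then reduces the variable-coefficient problem to the constant-coefficient weak unique continuation property of Theorem \ref{wkt}, where the bootstrap of \cite{LLR} is available. Once $U(\cdot,0)$ vanishes on a thick half-ball, the spreading is performed away from $\{x_{n+1}=0\}$, where $U$ solves a uniformly parabolic equation with Lipschitz coefficients, by quoting \cite{AV} and the space-like strong unique continuation result of \cite{EF_2003}; no ``Runge-type chaining'' enters (Runge approximation is a consequence of the theorem, not a tool for proving it). Note also that to even write the weak formulation with thin Cauchy data for $u\in\mb{H}^s(\mb{R}^{n+1})$ one needs Theorem \ref{energy}, i.e.\ that the extension lies in the weighted energy space and that $z^a\partial_z U\to \mathcal{H}^s u$ in $\mb{H}^{-s}$; your plan takes this for granted, whereas it is a substantive part of the argument (Lemma \ref{ext1}).
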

  
  In an exactly analogous way as  in \cite{GSU}, Theorem \ref{GUCP} is used to prove the Runge approximation properties  in Theorem \ref{Runge for q} and Theorem \ref{Runge for b and q} that is tailored for Theorem \ref{determining q} and Theorem \ref{determining b,q} respectively.  This allows bypassing the method of complex geometric optics (CGO) solutions which is a crucial ingredient in the local case  (see for instance \cite{SYL}). Over here, it is worthwhile to mention that that Runge approximation results were first obtained for $(-\Delta)^s$ in \cite{DSV}; see also \cite{DSV1}.
  
  Now regarding Theorem \ref{GUCP}, we  mention that in the case when $A=\mathbb I_n$, such a result has been established in \cite[Proposition 5.6]{LLR} as a consequence of the following weak unique continuation property for the corresponding extension problem. We refer to Section \ref{Carleman estimate and Unique continuation} for the precise notations.
  
   Before proceeding further, let us declare that  we will denote $1-2s$ by $a$ from now on.  Notice that $a \in (-1,1)$.
  \begin{theorem}[Weak unique continuation property]\label{wkt}
  Let $U_0$ be a weak solution to the following extension problem
  \begin{equation}\label{wkt2}
  \begin{cases}
  \operatorname{div}_X(x_{n+1}^a \nabla U_0) = x_{n+1}^a \partial_t U_0\ \text{in $\mb B_1^+,\ X=(x, x_{n+1}) $}
  \\
  \operatorname{lim}_{x_{n+1} \to 0^+} x_{n+1}^a \partial_{n+1} U_0(x, 0, t)=  U_0(x, 0, t) =0\ \text{in $B_1 \times (0,1)$.}
  \end{cases}
  \end{equation}
  Then $U_0\equiv 0$ in $\mathbb{B}_1^+  \times (0,1)$.
  \end{theorem}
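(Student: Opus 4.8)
The plan is to deduce the weak unique continuation property from a quantitative Carleman estimate for the degenerate parabolic extension operator $\operatorname{div}_X(x_{n+1}^a \nabla \cdot) - x_{n+1}^a \partial_t$, localized near a boundary point of the thin manifold $\{x_{n+1}=0\}$. First I would set up the geometry: since $U_0$ vanishes together with its weighted conormal derivative on $B_1 \times (0,1)$, one may perform an even reflection across $\{x_{n+1}=0\}$ — the vanishing of $\lim_{x_{n+1}\to 0^+} x_{n+1}^a \partial_{n+1} U_0$ is exactly the compatibility condition that makes the reflected function a weak solution of the same degenerate equation on a full (two-sided) neighborhood $\mathbb{B}_1 \times (0,1)$ of a point on the thin set. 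This converts a boundary unique continuation statement into an interior one: we now have a weak solution on a full cylinder that vanishes on an $n$-dimensional slice, and we want to conclude it vanishes identically near that slice.

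Next I would invoke (or prove, mirroring the variable-coefficient Carleman estimate announced for the main $\Hc^s$ result) a Carleman inequality of the form
\begin{equation*}
\tau^{k} \int e^{2\tau \phi} w_\sigma^a |U|^2 \,dX\,dt \;\le\; C \int e^{2\tau \phi} w_\sigma^a \left| \operatorname{div}_X(x_{n+1}^a \nabla U) - x_{n+1}^a \partial_t U \right|^2 \,dX\,dt
\end{equation*}
valid for all large $\tau$ and all $U$ compactly supported in a suitable parabolic cylinder around the base point, where $\phi$ is a carefully chosen weight (typically a function of $|X|$ and $t$, possibly of the form $-\log(\cdot)$ type or a convex exponential, engineered to be pseudoconvex for this degenerate operator) and $w_\sigma^a$ is the natural degenerate weight. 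The degeneracy $x_{n+1}^a$ with $a \in (-1,1)$ is the source of the main technical difficulty: the standard Hörmander conjugation/commutator computation must be carried out with the weight, controlling the error terms coming from $\nabla(x_{n+1}^a)$ and from the parabolic (non-self-adjoint, first-order-in-$t$) structure. This is exactly where I expect to spend the most effort — establishing the correct pseudoconvexity condition for the weight against the degenerate elliptic-parabolic symbol, and absorbing the bad terms, uniformly in $\tau$.

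Finally, with the Carleman estimate in hand, the conclusion follows by a routine cutoff argument: multiply the reflected solution by a cutoff $\eta$ supported in $\mathbb{B}_r \times (-r^2,\ldots)$ and equal to $1$ on a smaller cylinder, apply the estimate to $\eta U_0$, note that $(\operatorname{div}_X(x_{n+1}^a\nabla) - x_{n+1}^a\partial_t)(\eta U_0)$ is supported in the annular region where $\eta$ is not constant, where the weight $e^{2\tau\phi}$ is exponentially smaller than on the inner cylinder by the level-set geometry of $\phi$; letting $\tau \to \infty$ forces $U_0 \equiv 0$ on the inner cylinder, and a standard propagation/chain-of-balls argument then gives $U_0 \equiv 0$ on all of $\mathbb{B}_1^+ \times (0,1)$. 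The regularity needed to justify the integration by parts (so that $\eta U_0$ is an admissible test function and the weighted integrals are finite) should be supplied by interior and boundary De Giorgi–Nash–Moser-type estimates for degenerate parabolic equations with $A_2$-Muckenhoupt weight $x_{n+1}^a$, which I would cite as available in the literature on the Caffarelli–Silvestre extension.
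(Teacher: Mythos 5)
The even-reflection step is fine: the vanishing of the weighted conormal derivative is exactly what makes the evenly reflected function a weak solution of $\operatorname{div}(|x_{n+1}|^a\nabla \tilde U_0)=|x_{n+1}|^a\partial_t \tilde U_0$ across the thin set. The gap is in everything that follows. After reflection, the Dirichlet condition only tells you that $\tilde U_0$ vanishes on the codimension-one slice $\{x_{n+1}=0\}$, a set of measure zero, and your concluding Carleman-plus-cutoff argument never uses this information: as you describe it, the operator applied to $\eta \tilde U_0$ is supported in the annular region where $\nabla\eta\neq 0$, the weight there is claimed to be exponentially smaller than on the inner cylinder, and letting $\tau\to\infty$ kills $\tilde U_0$ inside. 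If that worked, it would apply verbatim to any solution (for instance $U_0\equiv 1$, which solves the equation but not the Cauchy conditions), so it cannot work. Concretely, a weight that is strictly larger on an inner cylinder than on a surrounding annulus must have an interior critical point, which is incompatible with the pseudoconvexity required for a Carleman estimate; the annulus-versus-inner-cylinder geometry is the strong-unique-continuation-at-a-point scheme, and it is viable only when the solution is already known to vanish to infinite order at the center, so that singular (or Gaussian, $\sigma^{-2\alpha}G$-type) weights can be applied to $\eta U_0$ with finite weighted norms and absorbable error terms.

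That missing ingredient is precisely the first step of the proof the paper relies on: Theorem \ref{wkt} is quoted from \cite{LLR}, where one first shows, by repeated differentiation of the equation in the tangential and time directions and a bootstrap of the zero Cauchy data through the degenerate structure, that $U_0$ vanishes to infinite order in space and time at every point of $B_1\times(0,1)$; only then is a Carleman estimate for the constant-coefficient degenerate operator brought in (and, as the paper notes, this second step can even be replaced by the space-analyticity result of \cite{BG1}). Alternatively, you could extend $U_0$ by zero across $\{x_{n+1}=0\}$ (legitimate since both Cauchy data vanish), which does yield vanishing on an open half-cylinder; but the hypersurface to be crossed is exactly where the operator degenerates, so the classical parabolic Carleman estimates you appeal to do not apply, and the degenerate ones available (including Lemma \ref{carl} of this paper) are proved for Neumann/even-type functions with Gaussian weights centered at a thin point and feed into doubling/blowup arguments, not into the one-sided level-set geometry you would need. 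In short, the bootstrap/infinite-order-vanishing step (or an analyticity substitute) is the essential missing idea, and the cutoff geometry as stated would prove a false statement.
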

  Such a result has been derived in \cite{LLR} by the following two steps.
  
  \emph{Step 1:}  By means of repeated differentiation and a bootstrap argument, it is first shown that the zero Cauchy data in \eqref{wkt2} implies that $U_0$ vanishes to infinite order in space and time at every point $(x, 0, t) \in B_1 \times (0,1). $

  \emph{Step 2:} Then by means of a Carleman estimate for the constant coefficient  operator in \eqref{wkt2} that the authors establish, it is shown that $U_0 \equiv 0$ in $\mathbb B_1 ^+ \times (0,1)$.

  Finally once Theorem \ref{wkt} is proven in \cite{LLR}, it is applied to the solution $U$ of the extension problem \eqref{exten prob} corresponding to Dirichlet data $u$  (when $A= \mb I_n$) using which one can  assert that $U$ vanishes in $\mb B_1^+ \times (0,1)$. Then  noting  that $U$ solves a uniformly parabolic PDE with real analytic coefficients away from $\{x_{n+1} =0\}$, one can thus  spread the zero set using the classical theory and thus conclude that $U \equiv 0$ in $\mb R^{n+1}_+ \times (0,1)$. Theorem \ref{GUCP} now follows since $U=u$ at $\{x_{n+1} =0\}$.

  It turns out that more recently in \cite{BG1}, it has been shown that solutions to a more general class of equations modelled on \eqref{wkt2} are real analytic in the space variable $x$ which includes the extension variable and therefore the use of the Carleman estimate in \emph{Step 2} above  can be avoided. We also mention that a certain  variant of the weak unique continuation property in Theorem \ref{wkt} is also used to characterize singular points in the fractional heat obstacle problem in \cite{BDGP}.

  Similar to that in \cite{LLR}, in this work we derive Theorem \ref{GUCP} by obtaining an analogous weak unique continuation property for extension problems of the type \eqref{wk1} where the matrix valued function $A$ satisfies the uniform ellipticity condition in  \eqref{ellip} and the Lipschitz growth condition in \eqref{assum}.  This constitutes one of the key novelties of this work. This is done by means of a new Carleman estimate   that we  establish for degenerate operators of the type \eqref{extop}. The  estimate in Lemma \ref{carl} below can be regarded as a generalization of the Carleman estimate  for uniformly parabolic operators with Lipschitz coefficients  as in the fundamental works of Escauriaza, Fernandez and Vessella in \cite{EF_2003, EFV_2006}.   Such a generalization has  required some  very subtle adaptations of the ideas in \cite{EF_2003, EFV_2006}  for which we  refer to the discussion in Section \ref{Carleman estimate and Unique continuation} below. Inspired by ideas in the recent work \cite{ABDG}, we  combine such an estimate with  a  monotonicity in time result as in Lemma \ref{mont} that we derive using which  we show the validity of a conditional doubling property for solutions to the extension problem. This facilitates  the use of blowup arguments  which then reduces the weak unique continuation property for \eqref{wk1} to that of the constant coefficient extension problem as in Theorem \ref{wkt} above. We would also like to mention that Theorem \ref{energy} is another central result in our work where we show that for $u \in \mb{H}^s(\mb{R}^{n+1})$, the solution $U$ to the corresponding extension problem \eqref{exten prob} belongs to the right energy space and moreover the weighted Neumann derivative  can be interpreted  as a limit in an appropriate norm. Therefore weak type methods can be applied and  this is finally extremely crucial for the proof of the  unique continuation result, Theorem \ref{GUCP}  in Section \ref{Carleman estimate and Unique continuation}. As the reader will see, the proof of our main results rely on several non-trivial facts from analysis and PDE that in our context beautifully combine.
  
  In closing, we provide a brief account of  unique continuation results in the nonlocal setting.  For nonlocal elliptic equations of the type $(-\Delta)^s +V$ a strong unique continuation theorem was obtained by Fall and Felli, see Theorem 1.3 in \cite{FF}. Their analysis combines the approach in \cite{GL1}, \cite{GL2} with the Caffarelli-Silvestre extension method in \cite{CS}.  We also mention the interesting work of Ruland \cite{Ru}, \cite{Ru2}, where the Carleman method has been used, together with \cite{CS}, to obtain results similar to those in \cite{FF} but with weaker assumptions on the potential $V$. In the time dependent case,  for global solutions of \begin{equation}\label{k01}(\partial_t - \Delta)^s u =Vu, \end{equation}   a backward space-time strong unique continuation theorem was previously established  in \cite{BG} with appropriate assumptions on $V$. We also refer to \cite{AT} for some interesting results on the regularity of the nodal sets of such solutions.  The result in \cite{BG} represents the nonlocal counterpart of the one first obtained  by Poon in \cite{Po} for the local case $s=1$. More recently, a space like strong unique continuation result for local solutions to \eqref{k01} has been obtained in \cite{ABDG} which constitutes the nonlocal counterpart of the space-like strong unique continuation results in the aforementioned works \cite{EF_2003, EFV_2006}. It is to be noted that both the works \cite{ABDG} and \cite{BG} uses the extension problem for the fractional heat operator that has been developed in \cite{NS} and \cite{ST} independently. When $s = 1/2$ the extension problem  was first introduced in \cite{Jr1} by Jones.

    The article is organized as follows. We outline the background to define the nonlocal operator $\mc{H}^s$ and its domain in section \ref{Preliminaries}. Also we discuss the mapping property of $\mc{H}^s$ there. In section \ref{Some direct problems}, we derive some results on the extension problem for $\mc{H}^s$ which will be followed by the well-posedness of the initial-exterior problems for $ \mc{H}^s + q$ and $ \mc{H}^s + \langle b,\nabla_{x} \rangle + q$. In section \ref{Carleman estimate and Unique continuation}, we present the unique continuation part where we first establish a Carleman estimate for the extended operator and combine it with a blow-up analysis to deduce the weak unique continuation result mentioned in Theorem \ref{GUCP}. In section \ref{Applications in Inverse Problems}, we prove Runge appoximation properties and provide the unique determination results for the inverse problems, Theorem \ref{determining q} and Theorem \ref{determining b,q}.

    \medskip

    \textbf{Acknowledgement:} Both the authors would like to thank Prof. Venkateswaran Krishnan for various valuable  comments and suggestions  related to this work and for constant encouragement.


   \section{Preliminaries} \label{Preliminaries}
   In this section we introduce the relevant notation and gather some auxiliary  results that will be useful in the rest of the paper. Generic points in $\mb{R}^n \times \mb{R}$ will be denoted by $(x_0, t_0), (x,t)$, etc. For an open set $\Omega\subset \mb{R}^n_x\times \mb{R}_t$, by $C_0^{\infty}(\Omega)$ we mean the set of compactly supported smooth functions in $\Omega$.
    We  will assume that the uniformly parabolic operator $\partial_t - \operatorname{div}(A(x)\nabla_x)$ in $\mb{R}^n\times \mb{R}$ has a globally defined fundamental solution $p(x,x',t)$ that satisfies for every $x\in \mb{R}^n$ and $t>0$
\begin{equation}\label{stoc}
P_t 1(x,t) = \int_{\mb{R}^n} p(x,x',t) \d x' = 1.
\end{equation}   
We also assume that the matrix valued function $A$  is uniformly elliptic, i.e.
\begin{equation}\label{ellip}
\Lambda^{-1} \mb{I} \leq A  \leq \Lambda \mb{I},
\end{equation}
for some $\Lambda>1$ and 
satisfies the following Lipschitz boundedness assumption
\begin{equation}\label{assum}
|A(x) - A(y) | \leq K |x-y|.
\end{equation}
We start by introducing the following notion of evolutive semigroup 
    \begin{align}\label{semigroup}
        P^{\mc{H}}_{\tau} u (x,t) = \int_{\mb{R}^n} p(x,y,\tau) u(y,t-\tau) \ \d y, \qd  \tn{ for } u \in \mc{S}(\mb{R}^{n+1})
    \end{align}
    where $ p(x,z,t) $ is the heat kernel associated to the elliptic operator \begin{equation}\label{L} \mc{L} \overset{def}= \operatorname{div}(A(x) \nabla). \end{equation} Note that, $\{P^{\mc{H}}_{\tau}\}_{\tau \ge 0}$ is a strongly continuous contractive semigroup satisfying $ \|P^{\mc{H}}_{\tau}u - u\|_{L^2(\mb{R}^{n+1})} = O(\tau)$.
    \begin{definition}
    For $s \in (0,1)$ and $u \in \mc{S}(\mb{R}^{n+1})$, we define $\mc{H}^s$ based on the Balakrishnan formula \cite[Eq. (9.63) on pp. 285]{Samko}) in the following way
    \begin{align}\label{defn of frac parabolic}
         \mc{H}^s u (x,t) = -\f{s}{\Gamma(1-s)} \int_{0}^{\iy} \left( P^{\mc{H}}_{\tau} u(x,t) - u(x,t) \right) \f{\d\tau}{\tau^{1+s}}.
    \end{align}
    \end{definition}
    Next we denote  by $\{E_{\lambda}\}$ the spectral measures associated to $\mc{L}$. More precisely, we let
    \begin{equation}\label{spect}
    \mc{L}= - \int_{0}^\infty \lambda\ dE_\lambda.\end{equation}
    Invoking  such a  spectral decomposition for the operator $\mc{L}$ and by using Fourier transform in $t$ variable, we alternatively express $\mc{H}^s u$ in Fourier terms. To do so, we first observe the following representation of the heat semigroup $\{P_t\}_{t\ge 0}$ in terms of spectral measures as well as an important identity for gamma functions
    \begin{align}\label{spec meas}
        P_t = \int_{0}^\iy e^{-\ld t} \d E_{\ld}, \tn{ and } \ -\f{s}{\Gamma(1-s)} \int_{0}^\iy  \f{ e^{-(\ld+i\sigma)t} - 1 }{\tau^{1+s}}\d\tau = \left( \ld+i\sigma \right)^s, \ \ld>0, \sigma\in\mb{R}.
    \end{align}
    We refer to Section 2 in \cite{BGMN} for a detailed account on this aspect.
    Now we consider the Fourier transform in time variable to obtain from \eqref{semigroup}
    \begin{align*}
        \mc{F}_t \left( P^{\mc{H}}_{\tau} u \right) (\xi,\sigma) = e^{-i\sigma\tau} P_\tau\left( \mc{F}_t u(\cdot,\sigma) \right)(\xi)
    \end{align*}
    which results into the Fourier analogue of the definition \eqref{defn of frac parabolic} 
    \begin{align*}
        \mc{F}_t \left(\mc{H}^s u \right)(\cdot,\sigma) & = -\f{s}{\Gamma(1-s)} \int_0^\iy \f{1}{\tau^{1+s}} \int_0^\iy \left( e^{-(\ld+i\sigma)\tau} -1 \right) \d E_\ld(\mc{F}_t u(\cdot,\sigma)) \ \d\tau \\
        & = \int_0^\iy \left(\ld+i\sigma\right)^s \d E_\ld(\mc{F}_t u(\cdot,\sigma))
    \end{align*}
    Here we have crucially used the relations in \eqref{spec meas}. Consequently, we can write for $u\in \mc{S}(\mb{R}^{n+1})$ 
    \begin{align*}
        \|\mc{F}_t \left(\mc{H}^s u \right)(\cdot,\sigma)\|_{L^2(\mb{R}^{n})} = \int_0^\iy |\ld+i\sigma|^{2s} \d\|E_\ld (\mc{F}_t u(\cdot,\sigma))\|^2, \qd  \sigma \in\mb{R}.
    \end{align*}
    Keeping this in mind, we now define the appropriate function space which constitutes the domain of $\mc{H}^s$ and associated norm.
    \begin{definition}
    For $s \in (0,1)$, we define the space $\mc{H}^{2s}(\mb{R}^{n+1})$ to be the completion of $\mc{S}(\mb{R}^{n+1})$ with respect to the norm 
    \begin{align} \label{norm}
        \|u\|_{\mc{H}^{2s}(\mb{R}^{n+1})} = \left( \int_\mb{R} \int_0^\iy \left( 1 + |\ld+i\sigma|^{2} \right)^s  \d\|E_\ld (\mc{F}_t u(\cdot,\sigma))\|^2 \d\sigma \right)^{\f{1}{2}}.
    \end{align}
    \end{definition}
    It is worth noting that $\tn{Dom}(\mc{H}) \subseteq \tn{Dom}(\mc{H}^s)$.
    
    More generally,  we introduce the various   function spaces that are  needed in this set-up. Let  $\mc{O}$  be an open set in $\mb{R}^{n+1}$ and $r \in \mb{R}$. We define
 	\begin{align*}
 	  &  \mc{H}^r\left(\mb{R}^{n+1}\right) = \bigg\{\text{Completion of $\mc{S}(\mb R^{n+1})$ w.r.t  the norm}:\\& \hspace{2.3cm} \int_{\mb{R}} \int_0^\infty \left(\left(1+|\lambda+i\sigma|^2\right)^{r/2}  d||E_\lambda(\mc{F}_{t} u(\cdot,\sigma))||^2\ \d\sigma \right)^{1/2} \bigg\}, \\
 	   & \mc{H}^r\left(\mc{O}\right) = \left\{u\vert_{\mc{O}}; u \in \mc{H}^r\left(\mb{R}^{n+1}\right)\right\}, \qd \widetilde{\mc{H}}^r\left(\mc{O}\right) = \tn{closure of } C_0^\iy(\mc{O}) \tn{ in } H^r\left(\mb{R}^{n+1}\right).
 	\end{align*}   
	Also we define
	\begin{equation}\label{subset}
	||u||_{\mc{H}^r\left(\mc{O}\right)} = \text{inf} \{||v||_{\mc{H}^r(\mb{R}^{n+1})}: v\vert_{\mc{O}} =u\}.
	\end{equation}
	Now from resolution of the parabolic version of the Kato square root problem as in \cite{AEN} and interpolation type arguments, we note   that the following equivalence holds  %
   \begin{align}\label{iden}
       \mb{H}^s (\mb{R}^{n+1}) = \mc{H}^{s} (\mb{R}^{n+1}), \qd s \in (0,1),
   \end{align}    
   where $\mb{H}^s(\mathbb R^{n+1})$ is the parabolic fractional Sobolev space defined as
   \begin{equation}\label{fpSS}
   \mb{H}^{s}(\mathbb R^{n+1}) \overset{def}= \{u \in L^2: (|\xi|^2 + i \rho)^{s/2} \mc{F}_{x,t} u(\xi, \rho) \in L^2\}.
   \end{equation}
   
  We refer to \cite[pages 6-7]{LN} for relevant details. Sometimes when the context is clear, the space-time fourier transform $\mc{F}_{x,t}u$ will be denoted by $\hat{u}.$    From now on in view of \eqref{iden}, we will identify both the spaces $\mb{H}^s$ and $\mc{H}^s$ and furthermore for a closed set $E$ in $\mb{R}^{n+1}$ we let
  \begin{equation}\label{nb1}
  \mb{H}^s_E= \{u \in \mb{H}^{s}(\mb{R}^{n+1}): \text{supp}(u) \subset E\}.
  \end{equation}
  It is easily seen that $\mb{H}^s_E$ is a Hilbert space.
  
  We now note that the adjoint operator $\mc{H}_{*}^s$ is defined in terms of the spectral resolution in the following manner
    \begin{align*}
        \mc{F}_t \left(\mc{H}_{*}^s u \right)(\cdot,\sigma) = \int_0^\iy (\ld-i\sigma)^s \d E_\ld(\mc{F}_t u(\cdot,\sigma)), \qd \tn{ for } u \in \mc{S}(\mb{R}^{n+1}).
    \end{align*}

    For $f,g \in \mc{S}(\mb{R}^{n+1})$, we observe from the properties of the spectral family of projection operators $\{E_\ld\}_{\ld>0}$ that
    \begin{align}
       \nn \langle \mc{H}^s f, g \rangle  = \langle \mc{H}^{\f{s}{2}}f, \mc{H}_{*}^{\f{s}{2}} g \rangle = \langle f, \mc{H}_{*}^s g \rangle & = \int_{\mb{R}}\int_{0}^\iy \left(\ld+i\sigma\right)^s \d\langle E_\ld \mc{F}_t u, \overline{\mc{F}_t v} \rangle(\cdot,\sigma) \d\sigma \\
      \label{mapping}  & \preceq \|f\|_{\mb{H}^{s}(\mb{R}^{n+1})} \|g\|_{\mb{H}^{s}(\mb{R}^{n+1})}.
    \end{align}
  As an outcome of the inequality \eqref{mapping}, we have the mapping property  $\mc{H}^s : \mb{H}^s(\mb{R}^{n+1}) \to \mb{H}^{-s}(\mb{R}^{n+1})$ where $\mb{H}^{-s}$ denotes the dual space. \\

 	%
 
 	%

 	
 	\section{Some Direct problems} \label{Some direct problems}
 	In this section, we study some direct problems related to the fractional operator $\mc{H}^s$. We start with the discussion on the extension problem for $\mc{H}^s$ . Then the well-posedness results for \eqref{ini-ext prob} and  \eqref{ini-ext prob for b,q}      will be presented which mainly relies on the Lax-Milgram type arguments. 
 	\subsection{The extension problem for $\mc{H}^s$}\label{ext}
 	 Our objective here is to solve the extension problem for $\mc{H}^s$ with prescribed Dirichlet data $u \in \mb{H}^s(\mb{R}^{n+1})$. More specifically, we consider solution to the following Dirichlet problem in $\mb{R}^{n+2}_{+}$ 
 	\begin{align}\label{exten prob}
 	\begin{cases}
 	   \mc{L}_a U=  z^{a} \dd_t U - \tn{div} \left( z^{a} A(x) \nabla_{x,z} U \right) = 0,& \qd \tn{ in } \mb{R}^{n+1} \times \mb{R}_{+}, \ a =1-2s\\
 	    U(x,t,0) = u(x,t),&  \qd \tn{ on } \mb{R}^{n+1}.
 	\end{cases}
 	\end{align}
 	by introducing a new variable $ z\in \mb{R}_{+}$. As it is well known by now, \eqref{exten prob} represents the parabolic counterpart of the Caffarelli-Silvestre extension problem as in \cite{CS} for $\mc{H}^s$. See \cite{BGMN, BS, NS, ST}. More precisely, it  has been shown in the cited works  that if $u \in \mb{H}^{2s}$, then we have in $L^{2}(\mathbb{R}^{n+1})$,
	\begin{equation}\label{conv1}
	\operatorname{lim}_{z \to 0^+} z^a \partial_z U= \mc{H}^s.
	\end{equation}

	In our setting of the Calderon problem, it turns out that we need to deal with $u \in \mb{H}^s$. Therefore, this requires generalizing the convergence in \eqref{conv1}  with respect to a weaker norm for functions in $\mb{H}^s$  and this  is the one of the main contents  of Theorem \ref{energy} below. Such a result has already been established in the case when $A=\mathbb{I}$ in \cite[Proposition 4.2]{CLR}.  Moreover, we also show that the extended function belongs to the right energy space so that weak type methods as in \cite{BG} can be subsequently applied.   This is finally relevant to the weak unique continuation result Theorem \ref{GUCP} that we prove in Section \ref{Carleman estimate and Unique continuation}.   In this regard, we now introduce the relevant energy space.
	
 	 For an open set $\Sigma \subseteq \mb{R}^{n+1} \times \mb{R}_{+}$, we define the energy space $\mc{L}^{1,2} \left(\Sigma; z^a \d x \d t \d z \right)$ which consists of all those $ U \in L^2\left(\Sigma; z^a \d x \d t \d z \right) $ such that $\nabla_{x} U$ and $\dd_z U \in L^2\left(\Sigma; z^a \d x \d t \d z \right)$, endowed with the norm 
 	\begin{align*}
 	    \|U\|_{\mc{L}^{1,2} \left( \Sigma; z^a \d x \d t \d z \right)} \overset{def}=  \int_{\Sigma \times \mb{R}_{+}} z^a \left( |U|^2 + |\nabla_x U|^2 + |\dd_z U|^2 \right) \d x \d t \d z.
 	\end{align*}
 As mentioned above, we now state the central result of this subsection  which concerns the various  convergence properties of the extended function in \eqref{exten prob} (and its  weighted Neumann derivative)  corresponding to $u \in \mb{H}^s$. Theorem \ref{energy} below can be regarded as  the variable coefficient analogue of Proposition 4.2 in \cite{CLR}. We crucially adapt some ideas from \cite{BGMN} in our proof  of this result. However unlike that in \cite{BGMN},  since the convergence results  are established in different norms, therefore our proof has required some delicate reworking of the ideas in \cite{BGMN}.
 	\begin{theorem}\label{energy}
 	  Let $s \in (0,1)$ and $u \in \mb{H}^s(\mb{R}^{n+1})$. There exists a solution to \eqref{exten prob} satisfying
 	\begin{align*}
 	   & i)\ \lim_{z \to 0+} U(\cdot,\cdot,z) = u  \tn{ in } \mb{H}^{s}(\mb{R}^{n+1}), \\
 	   & ii)\ \lim_{z \to 0+} \f{2^{-a} \Gamma(s)}{\Gamma(1-s)} z^{a} \dd_z U(\cdot,\cdot, z) = \mc{H}^s u  \tn{ in } \mb{H}^{-s}(\mb{R}^{n+1}), \\
 	   & iii)\  \|U\|_{\mc{L}^{1,2} \left( \mb{R}^{n+1} \times (0,M); z^a \d x \d t \d z \right)} \preceq_{M} \|u\|_{\mb{H}^{s} (\mb{R}^{n+1})}. 
 	\end{align*}
 	\end{theorem}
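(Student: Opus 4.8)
The plan is to construct the solution via the spectral/semigroup representation and then push the convergence statements from the dense class $\mathcal S(\mb R^{n+1})$ (or $\mb H^{2s}$) down to $\mb H^s$ by density together with uniform-in-$z$ norm bounds. First I would write down the explicit solution using the subordination formula: for $u\in\mathcal S(\mb R^{n+1})$ set
\[
U(\cdot,\cdot,z)=\frac{z^{2s}}{4^s\Gamma(s)}\int_0^\infty e^{-z^2/(4\tau)}\,P^{\mc H}_{\tau}u(\cdot,\cdot)\,\frac{\d\tau}{\tau^{1+s}},
\]
or equivalently in terms of the spectral resolution of $\mc L$ and the Fourier transform in $t$, $\widehat{U}(\cdot,\sigma,z)=\int_0^\infty \varphi_z(\lambda+i\sigma)\,\d E_\lambda(\widehat u(\cdot,\sigma))$, where $\varphi_z(\mu)=c_s (z\sqrt{\mu})^s K_s(z\sqrt\mu)$ is the Bessel-type profile solving the ODE $\varphi''+\frac{a}{z}\varphi'-\mu\varphi=0$ with $\varphi_z(0)=1$. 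A direct computation (as in \cite{BGMN, ST}) shows this $U$ solves $\mc L_a U=0$ in $\mb R^{n+1}\times\mb R_+$. The three assertions then become statements about the multipliers $\varphi_z(\lambda+i\sigma)$ and $z^a\partial_z\varphi_z(\lambda+i\sigma)$ acting on the spectral measure of $u$.

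For $(i)$, since $|\varphi_z(\mu)|\le 1$ uniformly and $\varphi_z(\mu)\to 1$ pointwise as $z\to0^+$ for each $\mu$ with $\operatorname{Re}\mu>0$, the dominated convergence theorem applied in the measure $\d\|E_\lambda(\widehat u(\cdot,\sigma))\|^2\,\d\sigma$ against the weight $(1+|\lambda+i\sigma|^2)^s$ gives $U(\cdot,\cdot,z)\to u$ in $\mb H^s$; the bound $\sup_z\|U(\cdot,\cdot,z)\|_{\mb H^s}\le\|u\|_{\mb H^s}$ is immediate from $|\varphi_z|\le1$. For $(ii)$, one computes $z^a\partial_z\varphi_z(\mu)\to -\frac{\Gamma(1-s)}{2^{-a}\Gamma(s)}\mu^s$ as $z\to0^+$ (this is exactly the normalization constant appearing in \eqref{conv1}), and $|z^a\partial_z\varphi_z(\mu)|\lesssim |\mu|^s$ uniformly in $z$; testing against $g\in\mathcal S$ and using the pairing \eqref{mapping} together with dominated convergence gives convergence in $\mb H^{-s}$, with the uniform bound $\sup_z\|z^a\partial_z U(\cdot,\cdot,z)\|_{\mb H^{-s}}\lesssim\|u\|_{\mb H^s}$. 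For $(iii)$, multiply the equation by $U$ and integrate by parts over $\mb R^{n+1}\times(0,M)$ (justified first for smooth data), or more cleanly compute the energy directly in spectral variables: $\int_0^M z^a(|\partial_z\varphi_z|^2+\mu|\varphi_z|^2)\,\d z$ is comparable, uniformly for $\operatorname{Re}\mu>0$, to $(1+|\mu|)^s\cdot(\text{bounded factor depending on }M)$, using the known asymptotics of $K_s$ at $0$ and $\infty$; integrating this estimate against $\d\|E_\lambda(\widehat u)\|^2\d\sigma$ yields $\|U\|_{\mc L^{1,2}(\mb R^{n+1}\times(0,M);z^a)}\preceq_M\|u\|_{\mb H^s}$. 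Finally, density of $\mathcal S$ in $\mb H^s$ plus all the uniform-in-$z$ bounds lets me extend $U$ to general $u\in\mb H^s$ and pass all three limits to the limit function.

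The main obstacle is that the clean spectral/Bessel computation is only transparent when the principal part is constant-coefficient (so that Fourier transform in $x$ is available); here $\mc L=\operatorname{div}(A(x)\nabla)$ has only Lipschitz coefficients, so one must work with the spectral resolution $\{E_\lambda\}$ of $\mc L$ abstractly rather than with an explicit kernel, and verify that the scalar Bessel-profile estimates transfer to the operator setting via the spectral theorem. A secondary technical point is the justification of the integration by parts in $(iii)$ and of the interpretation of the weighted Neumann derivative as an $\mb H^{-s}$-limit for merely $\mb H^s$ data — this is where I expect to have to "delicately rework" the $\mathcal S$-level identities, approximating $u$ by Schwartz functions and controlling the error uniformly in $z$ near $z=0$. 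This is precisely the point the authors flag as requiring adaptation of the arguments in \cite{BGMN}; the heat-kernel bounds \eqref{stoc} and the uniform ellipticity \eqref{ellip}–\eqref{assum} are what make the semigroup $P^{\mc H}_\tau$ well-behaved enough for the subordination integral to converge and for the dominated-convergence arguments to go through.
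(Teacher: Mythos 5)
Your proposal follows essentially the same route as the paper: the subordination/spectral representation of $U$, Bessel-multiplier estimates involving $K_s$ and $K_{1-s}$ for parts (i)--(iii), and a density argument from Schwartz data with uniform-in-$z$ bounds to handle general $u\in \mb{H}^s$. The only step you leave implicit is that in (iii) the gradient term $\|\nabla_x U\|_{L^2(z^a\,\d x\,\d t\,\d z)}$ must be identified with the spectral quantity $\int \lambda \,\d\|E_\lambda \mc{F}_t U\|^2$ via the equivalence $\|\nabla_x f\|_{L^2}\approx \|(-\mc{L})^{1/2} f\|_{L^2}$, which the paper obtains from the resolution of the Kato square root problem.
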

 	\begin{proof}
 	We first note that the solution to \eqref{exten prob} is given by 
 	\begin{align}\label{soln to extn prob}
 	    U(x,t,z) = \int_0^\iy \int_{\mb{R}^n} P^a_z (x,y,\tau) u(y,t-\tau) \ \d y\d\tau
 	\end{align}
 	where \[P^a_z (x,y,t) : = \f{1}{2^{1-a} \Gamma(\f{1-a}{2})} \f{z^{1-a}}{t^{\f{3-a}{2}}} e^{-\f{z^2}{4t}} p(x,y,t).\] By taking the Fourier transform in time variable in \eqref{soln to extn prob}, we have the expression
 	\begin{align}
 	   \nn \mc{F}_t U(x,\sigma,z) & = \f{z^{1-a}}{2^{1-a}\Gamma(\f{1-a}{2})} \int_0^\iy \f{e^{-\f{z^2}{4\tau}}}{\tau^{\f{3-a}{2}}} e^{-i\tau\sigma} P_\tau(\mc{F}_t u(\cdot,\sigma))(x) \ \d\tau. \\
 	  \label{FT of the soln to extn prob}  & = \f{z^{1-a}}{2^{1-a}\Gamma(\f{1-a}{2})} \int_0^\iy \int_0^\iy \f{e^{-\f{z^2}{4\tau}}}{\tau^{\f{3-a}{2}}} e^{-(\ld+i\sigma)\tau} \d\tau \ \d E_{\ld} (\mc{F}_t u)(\cdot, \sigma)(x)
 	\end{align}
 	where we used the spectral representation of $P_\tau$ as in \eqref{spec meas}  in the last line. The relation \eqref{FT of the soln to extn prob} readily implies
 	\begin{align}\label{re1}
 	    \mc{F}_t \left( U(\cdot,\sigma,z) - u(\cdot,\sigma) \right) = \f{z^{2s}}{4^s \Gamma(s)} \int_0^\iy \int_0^\iy \f{e^{-\f{z^2}{4\tau}}}{\tau^{1+s}} \left( e^{-(\ld + i\sigma) \tau } - 1 \right) \d \tau \ \d E_{\ld} (\mc{F}_t u)(\cdot, \sigma)
 	\end{align}
 	To show that $U$ attains the prescribed data $u$ at $z=0$ in the $\mb{H}^s$ sense, we recall the important identity which can be found in \cite[page 369]{GR}
 	\begin{align}\label{Bessel func}
 	    \int_0^\iy t^{\nu - 1} e^{-\left( \f{\beta}{t} + \gamma t \right)} \ \d t = 2 \left( \f{\beta}{\gamma} \right)^{\f{\nu}{2}} K_\nu (2 \rt{\beta\gamma})
 	\end{align}
 	where $ \Re(\beta), \Re(\gamma) > 0 $ and $ K_{\nu} $ is the Bessel function of third kind. We notice that 
 	\begin{align}
 	   \nn \int_0^\iy \f{e^{-(\ld + i\sigma)\f{z^2}{4\theta}}}{\theta^{1+s}} \left[ e^{-\theta} - 1 \right] \d \theta & = 2 \left( \f{z \rt{\ld+i\sigma}}{2} \right)^{-s} K_{s} \left( z \rt{\ld+i\sigma} \right) - \Gamma(s) \left( \f{z \rt{\ld+i\sigma}}{2} \right)^{-2s} \\
 	  \label{reln_Bessel func}  & \hspace{-2cm} = 2^{1+s} \left( z \rt{\ld+i\sigma} \right)^{-2s} \left( \left( z \rt{\ld+i\sigma} \right)^{s} K_{s} \left( z \rt{\ld+i\sigma} \right) - 2^{s-1} \Gamma(s) \right)
 	\end{align}
 	where we took $ \beta = \f{(\ld + i\sigma)z^2}{4} , \gamma = 1, \nu = -s $ in \eqref{Bessel func} and used analytic continuation to the following identity
 	\begin{align}\label{id}
 	    \int_0^\iy e^{-\f{\zeta^2}{\theta}} \f{\d \theta}{\theta^{1+s}} = \int_0^\iy \f{e^{-p}}{\left( \f{\zeta^2}{p} \right)^{1+s}} \f{\zeta^2}{p^2} \d p = \zeta^{-2s}\int_0^\iy p^{s-1} e^{-p} \d p = \Gamma(s) \zeta^{-2s}, \tn{ for } \zeta > 0 . 
 	\end{align}
	Alternatively, extension of the identity \eqref{id} to complex parameters   can also be justified   by a contour type integration in the complex plane. See for instance the proof of Theorem 1.1 in \cite{BGMN}.
 	Using \eqref{re1} we have, 
 	\begin{align}
 	   & \| U(\cdot,\cdot,z) - u(\cdot,\cdot) \|^{2}_{\mb{H}^{s} (\mb{R}^{n+1})} = \int_\mb{R} \int_0^\iy \left( 1 + |\ld + i\sigma|^{\f{s}{2}} \right)^2 \d \|E_\ld (\mc{F}_t \left( U(\cdot,\sigma,z) - u(\cdot,\sigma) \right))\|^2 \d\sigma \nn \\
 	    & \qd \preceq z^{4s} \int_{\mb{R}} \int_0^\iy \left( 1 + |\ld + i\sigma|^{\f{s}{2}} \right)^2 \left\vert \int_0^\iy \f{e^{-\f{z^2}{4\tau}}}{\tau^{1+s}} \left( e^{-(\ld + i\sigma) \tau } - 1 \right) \d \tau \right\vert^2 \d \|E_\ld (\mc{F}_t u)(\cdot,\sigma)\|^2 \d\sigma \nn \\
 	    & \qd \preceq z^{4s} \int_{\mb{R}} \int_0^\iy \left( 1 + |\ld + i\sigma|^{\f{s}{2}} \right)^2 |\ld + i\sigma|^{2s} \left\vert  \int_0^\iy \f{e^{-(\ld + i\sigma)\f{z^2}{4\theta}}}{\theta^{1+s}} \left[ e^{-\theta} - 1 \right] \ \d \theta \right\vert^2 \d \|E_\ld (\mc{F}_t u)(\cdot,\sigma)\|^2 \d\sigma,\notag
	    \end{align}
	    where in the last line, we used a complex change of variable which can be justified  by  a contour type integration.  Now using \eqref{reln_Bessel func} we find from above	    
	    \begin{align}
 	& \| U(\cdot,\cdot,z) - u(\cdot,\cdot) \|^{2}_{\mb{H}^{s} (\mb{R}^{n+1})}\notag\\ &\qd \preceq \int_{\mb{R}} \int_0^\iy \left( 1 + |\ld + i\sigma|^{\f{s}{2}} \right)^2 \left\vert \left( z \rt{\ld+i\sigma} \right)^{s} K_{s} \left( z \rt{\ld+i\sigma} \right) - 2^{s - 1} \Gamma(s) \right\vert^2 \d \|E_\ld (\mc{F}_t u)(\cdot,\sigma)\|^2 \d\sigma \nn \\
 	    & \qd \preceq \underset{|\xi| \le \ep}{\tn{sup}} \ \left|{\xi^s K_s(\xi) - 2^{s - 1}  \Gamma(s)} \right|^2 \int_{\mb{R}} \int_0^\iy \left( 1 + |\ld + i\sigma|^{\f{s}{2}} \right)^2 \d \|E_\ld (\mc{F}_t u)(\cdot,\sigma)\|^2 \d\sigma \nn \\
 	 \label{bdry val of CS extn}  & \qd \qd \qd + \underset{|\xi| > \ep}{\tn{sup}} \ \left|{\xi^s K_s(\xi) - 2^{s - 1}  \Gamma(s)} \right|^2 \int_{\mb{R}} \int_0^\iy \boldsymbol{\chi}_{z |\ld + i\sigma|^{\f{1}{2}} > \ep} \left( 1 + |\ld + i\sigma|^{\f{s}{2}} \right)^2 \d \|E_\ld (\mc{F}_t u)(\cdot,\sigma)\|^2 \d\sigma 
 	\end{align}
 	Taking $ z \to 0+ $ in \eqref{bdry val of CS extn}, we notice that its second term converges to zero. We also use the fact that $\xi^s K_s(\xi)$ is uniformly bounded for large $\xi$ which follows from the fact that for all large $\xi$
	\[
	|K_s(\xi) | \leq C e^{-\xi}.\]
	See for instance \cite[(5.11.8)]{Le}.
	
	After that, we use 
 	\begin{align*}
 	    \lim_{z \to 0} z^s K_s(z) = 2^{s - 1} \Gamma(s)
 	\end{align*}
 	and let $ \ep $ approach to zero in \eqref{bdry val of CS extn} to conclude that the first integral in \eqref{bdry val of CS extn} goes to $0$. This establishes i).
	
	 We now turn our attention to ii), i.e. we show that
 	$ \lim_{z \to 0+} \f{2^{-a} \Gamma(s)}{\Gamma(1-s)} z^{a} \dd_z U(\cdot,\cdot, z) = \mc{H}^s u $ in $ {\mb{H}^{-s}(\mb{R}^{n+1})} $. We first assume that $u \in \mathcal S(\mathbb{R}^{n+1})$. In order to prove ii), we will  make use of  the following identity holds which was observed in \cite[(3.14)]{BGMN}
 	%
	
 	\begin{align}\label{weighted norm der}
 	    \f{2^{-a} \Gamma(s)}{\Gamma(1 - s)} \ z^{a}\dd_{z} \mc{F}_t U (\cdot,\sigma,y) = - \f{1}{\Gamma(1 - s)} \int_0^\iy \left( \ld + i\sigma \right)^s \int_0^\iy \f{e^{-\theta} e^{-(\ld + i\sigma)\f{z^2}{4\theta}}}{\theta^s} \d \theta \ \d E_\ld (\mc{F}_t u) (\cdot,\sigma)
 	\end{align}
 	Also we have
 	\begin{align*}
 	    \mc{F}_t \left( \mc{H}^s u \right)(\cdot, \sigma) = \int_{0}^\iy \left( \ld + i \sigma \right)^s \d E_\ld (\mc{F}_t u)(\cdot,\sigma).
 	\end{align*}
 	We also make use of the following identity which follows from \eqref{Bessel func} by taking $ \beta = \f{(\ld + i\sigma)z^2}{4}, \gamma = 1, \nu = 1-s.$
 	\begin{align}\label{b1}
 	    \int_0^\iy \f{e^{-\theta} e^{-(\ld + i\sigma)\f{z^2}{4\theta}}}{\theta^s} \d \theta = 2 \left( \f{z \rt{\ld + i\sigma}}{2} \right)^{1 - s} K_{1 - s} \left(z \rt{\ld+i\sigma} \right).
 	\end{align}

	 We thus find
 	\begin{align}\label{cr1}
 	   & \f{2^{-a} \Gamma(s)}{\Gamma(1 - s)} \ z^{a}\dd_{z} \mc{F}_t U (\cdot,\sigma,z) - \mc{F}_t \left( \mc{H}^s u \right)(\cdot, \sigma) \\
 	    & = \f{2^s}{\Gamma(1 - s)} \int_0^\iy \left( \ld + i\sigma \right)^s \left( \left( {z \rt{\ld + i\sigma}} \right)^{1 - s} K_{1 - s} \left(z \rt{\ld+i\sigma} \right) - 2^{-s} \Gamma(1 - s) \right) \d E_\ld (\mc{F}_t u) (\cdot,\sigma).\notag
 	\end{align}
 	Now for any test function $ \phi \in {\mb{H}^{s}(\mb{R}^{n+1})} $,  using Cauchy-Schwarz inequality we find
 	\begin{align}\label{bl1}
 	   & \left\langle \f{2^{-a} \Gamma(s)}{\Gamma(1-s)} z^{a} \dd_z U(\cdot,\cdot, z) - \mc{H}^s u, \phi \right\rangle_{\mb{H}^{-s}(\mb{R}^{n+1}), {\mb{H}^{s}(\mb{R}^{n+1})}} \\ 	    & = \int_\mb{R} \int_0^\iy \d \left\langle E_\ld \left( \f{2^{-a} \Gamma(s)}{\Gamma(1 - s)} \ y^{a}\dd_{y} \mc{F}_t U (\cdot,\sigma,z) - \mc{F}_t \left( \mc{H}^s u \right)(\cdot, \sigma) \right), E_\ld \phi \right\rangle \d \sigma \notag\\
 	    & \preceq \left( \int_\mb{R} \int_0^\iy \left( 1 + |\ld + i\sigma| \right)^{-s} \d \left\| E_\ld \left( \f{2^{-a} \Gamma(s)}{\Gamma(1 - s)} \ z^{a}\dd_{z} \mc{F}_t U (\cdot,\sigma,z) - \mc{F}_t \left( \mc{H}^s u \right)(\cdot, \sigma) \right) \right\|^2 \d\sigma \right)^{1/2} \notag \\
 	    & \hspace{1cm}\times\left(  \int_\mb{R} \int_0^\iy \left( 1 + |\ld + i\sigma| \right)^{s} \d \|E_\ld \phi\|^2  \d\sigma\right)^{1/2}\notag \\
 	    & \preceq \|\phi\|_{\mb{H}^{s}(\mb{R}^{n+1})} \left(\int_\mb{R} \int_0^\iy (1+|\ld + i\sigma|)^{-s} \d \left\| E_\ld \left( \f{2^{-a} \Gamma(s)}{\Gamma(1 - s)} \ z^{a}\dd_{z} \mc{F}_t U (\cdot,\sigma,z) - \mc{F}_t \left( \mc{H}^s u \right)(\cdot, \sigma) \right) \right\|^2 \d\sigma \right)^{1/2}.\notag
	    \end{align}
	    Then  using \eqref{cr1}  and also by    using properties of the projection operators  $\{E_\lambda\}$ we infer
 	\begin{align*}
 	   & \left\|  \f{2^{-a} \Gamma(s)}{\Gamma(1-s)} z^{a} \dd_z U(\cdot,\cdot, z) - \mc{H}^s u \right\|_{\mb{H}^{-s}(\mb{R}^{n+1})} \\
 	    & \preceq \underset{|w| \le \ep}{\tn{sup}} \ \left|{w^{1-s} K_{1-s}(w) - 2^{-s}  \Gamma(1-s)} \right| \left(\int_{\mb{R}} \int_0^\iy |\ld + i\sigma|^{s} \d \| E_\ld (\mc{F}_t u) (\cdot,\sigma)\|^2\right)^{1/2}  \\
 	    & \hspace{2cm} + \underset{|w| > \ep}{\tn{sup}} \ \left|{w^{1-s} K_{1-s}(w) - 2^{-s}  \Gamma(1-s)} \right| \left(\int_{\mb{R}} \int_0^\iy \boldsymbol{\chi}_{z |\ld + i\sigma|^{\f{1}{2}} > \ep} |\ld + i\sigma|^{s} \d \|E_\ld (\mc{F}_t u) (\cdot,\sigma)\|^2\right)^{1/2}.
 	\end{align*}	
Similarly as before, we first take $ z \to 0+ $ and then let $\ep \to 0$ to assert that
 	\begin{align}\label{kj}
 	     \left\| \f{2^{-a} \Gamma(s)}{\Gamma(1-s)} z^{a} \dd_z U(\cdot,\cdot, z) - \mc{H}^s u \right\|_{\mb{H}^{-s}(\mb{R}^{n+1})} \to 0, \qd \tn{ as } z \to 0+,
 	\end{align}
	for $u \in \mathcal S(\mathbb{R}^{n+1})$.

	    Now let $u_k \to u$ in $\mb{H}^s$ where $u_k$'s are in $\mathcal S(\mathbb{R}^{n+1})$.  We denote by $U_k$ and $U_l$ the solutions to the extension problem \eqref{exten prob} corresponding  to Dirichlet data $u_k$ and $u_l$ respectively. Then using \eqref{weighted norm der} and by an analogous argument as in \eqref{bl1}  we find as $k\to \infty$
	    
	    \begin{align}\label{cc}
	    &\left\| \f{2^{-a} \Gamma(s)}{\Gamma(1-s)} z^{a} \dd_z (U_k - U)(\cdot,\cdot, z)  \right\|_{\mb{H}^{-s}(\mb{R}^{n+1})}\\
	    &   \preceq \underset{\mathbb R^+}{\tn{sup}} \ \left|w^{1-s} K_{1-s}(w) \right| \left(\int_{\mb{R}} \int_0^\iy |\ld + i\sigma|^{s} \d \| E_\ld (\mc{F}_t (u_k -u) (\cdot,\sigma))\|^2 \right)^{1/2}  \to 0,\notag \\
	    & \text{(since $u_k \to u$ in $\mb{H}^s$)}.\notag
 	  	    	    \end{align}   	    
	    
	Thus  $\{z^a \dd_z U_k\}$  is uniformly Cauchy in $z$ as $z \to 0^+$. This fact combined with \eqref{kj} implies   ii) in a standard way.

 	%
 	
 	%
 	
 	%

	Now we plan to demonstrate the energy estimate 
 	\begin{align}\label{enr}
 	    \|U\|_{\mc{L}^{1,2} \left( \mb{R}^{n+1} \times (0,M); z^a \d x \d t \d y \right)} \preceq_{M} \|u\|_{\mb{H}^{s} (\mb{R}^{n+1})}.
 	\end{align}
 	We closely follow the arguments from \cite{BGMN} to establish \eqref{enr}. We will not be concerned with proving 
 	\begin{align*}
 	    \|U\|_{L^2 (\mb{R}^{n+1} \times (0,M); z^a \d x \d t \d y)} \preceq_{M} \|u\|_{\mb{H}^s (\mb{R}^{n+1})}.
 	\end{align*}\ wer
 	as this is already covered in \cite[(3.15)]{BGMN}.  We first estimate the term
 	\begin{align*}
 	   \left\| z^{\f{a}{2}} \dd_z U \right\|_{L^2 (\mb{R}^{n+1} \times (0,M))} & = \int_0^M \int_\mb{R} z^{a} \| \dd_z \mc{F}_t U (\cdot,\sigma,y) \|_{L^2(\mb{R}^n)}^2 \d \sigma \d z.
 	\end{align*}   
  By using \eqref{weighted norm der} and \eqref{b1}, we find that such a term equals
 	 \begin{align*}
 	   & \int_0^M \int_\mb{R} \int_0^\iy z^{-a} |\ld + i\sigma|^{2s} \left\vert \int_0^\iy \f{e^{-\theta} e^{-(\ld + i\sigma)\f{z^2}{4\theta}}}{\theta^s} \d \theta \right\vert^2 \d\|E_{\ld} \mc{F}_t u(\cdot,\sigma)\|^2 \d\sigma \d z \\
 	  & \simeq \int_0^M \int_\mb{R} \int_0^\iy  z^{-a} |\ld + i\sigma|^{2s} \left\vert \left( {z \rt{\ld + i\sigma}} \right)^{1 - s} K_{1 - s} \left( z\rt{\ld+i\sigma} \right) \right\vert^2 \d\|E_{\ld} \mc{F}_t u(\cdot,\sigma)\|^2 \d\sigma \d z
	  &= I_1+I_2,
 	\end{align*}
	where $I_1$ is the integral over the region where  $|z \rt{\ld + i\sigma}| \le 1$ and    $I_2$ is the integral over the complement. 	We first estimate $I_1$ as follows. 	%
 	\begin{align}
 	  \nn I_1= &  \int_\mb{R} \int_0^\iy \int_0^{|\ld + i\sigma|^{-\f{1}{2}}} z^{-a} |\ld + i\sigma|^{2s} \left\vert \left( {z \rt{\ld + i\sigma}} \right)^{1 - s} K_{1 - s} \left( z \rt{\ld+i\sigma} \right) \right\vert^2 \d\|E_{\ld} \mc{F}_t u(\cdot,\sigma)\|^2 \d\sigma \d z \\
 	 \nn  & \preceq \underset{|z| \le 1}{\tn{sup}} \left\vert z ^{1 - s} K_{1 - s}(z) \right\vert^2 \int_\mb{R} \int_0^\iy \left( \int_0^{|\ld + i\sigma|^{-\f{1}{2}}} z^{2s-1} \d z \right) |\ld + i\sigma|^{2s} \d\|E_{\ld} \mc{F}_t u(\cdot,\sigma)\|^2 \d\sigma \\
 	  \label{energy 1} &  \preceq \int_\mb{R} \int_0^\iy |\ld + i\sigma|^{s} \ \d\|E_{\ld} \mc{F}_t u(\cdot,\sigma)\|^2 \d\sigma \preceq \|u\|_{\mb{H}^s (\mb{R}^{n+1})}.
 	\end{align}
 	Likewise, $I_2$ can be estimated as
 	\begin{align}
 	  \nn I_2 &= \int_\mb{R} \int_0^\iy \int_{|\ld + i\sigma|^{-\f{1}{2}}}^\iy z^{-a} |\ld + i\sigma|^{2s} \left\vert \left( {z \rt{\ld + i\sigma}} \right)^{1 - s} K_{1 - s} \left(z \rt{\ld+i\sigma} \right) \right\vert^2 \d\|E_{\ld} \mc{F}_t u(\cdot,\sigma)\|^2 \d\sigma \d z \\
 	  \nn  & \preceq \int_\mb{R} \int_0^\iy \left(\int_{|\ld + i\sigma|^{-\f{1}{2}}}^\iy z  \left\vert K_{1 - s}\left(z \rt{\ld+i\sigma} \right) \right\vert^2 \ \d z \right) |\ld + i\sigma|^{1+s} \ \d\|E_{\ld} \mc{F}_t u(\cdot,\sigma)\|^2 \d\sigma \\
 	  \nn  & \preceq \int_\mb{R} \int_0^\iy \left(\int_{|\ld + i\sigma|^{-\f{1}{2}}}^\iy e^{-z |\ld + i\sigma|^{\f{1}{2}}}\f{\d z}{|\ld + i\sigma|^{\f{1}{2}}} \right) |\ld + i\sigma|^{1+s} \ \d\|E_{\ld} \mc{F}_t u(\cdot,\sigma)\|^2 \d\sigma \\
 	  \label{energy 2}  & \preceq \int_\mb{R} \int_0^\iy \left( \int_{1}^\iy e^{-m} \ \d m \right) |\ld + i\sigma|^{s} \ \d\|E_{\ld} \mc{F}_t u(\cdot,\sigma)\|^2 \d\sigma \preceq \|u\|_{\mb{H}^s (\mb{R}^{n+1})}.
 	\end{align}
 	where we have used the asymptotic 
 	$ \vert K_{1-s}(z) \vert^2 = \large{O}\left( \f{e^{-|z|}}{|z|} \right)$ for $|z| \ge 1.$
 	See for instance \cite[5.11.10]{Le}. Combining  \eqref{energy 1} and \eqref{energy 2}, we obtain $\left\| z^{\f{a}{2}} \dd_z U \right\|_{L^2 (\mb{R}^{n+1} \times (0,M))} \preceq \|u\|_{\mb{H}^s(\mb{R}^{n+1})}$. Now we estimate the term $\|\nabla_xU\|_{L^2 (\Sigma; z^a \d x \d t \d z)}$. For that, we note that from the resolution of the famous Kato square root problem as in \cite{AHLMT} and \cite{AHLT}, we have for a smooth function $f$ decaying rapidly at infinity in $\mb R^n$  that the following holds
	\begin{equation}\label{kres}
	||\nabla_x f||_{L^{2}(\mb R^n)} \approx ||(-\mc{L})^{1/2} f||_{L^{2}(\mb R^n)}.
	\end{equation}
	Combining this with Plancherel theorem in the $t$ variable we find	 	%
 	\begin{align*}
 	   & \|\nabla_x U\|_{L^2 (\Sigma; z^a \d x \d t \d z)} \approx \|z^{\f{a}{2}} (-\mc{L})^{\f{1}{2}} \mc{F}_t U\|_{L^2 (\mb{R}^{n+1} \times (0,M))}\ \text{(using Plancherel theorem in the $t$ variable)} \\
 	    & = \int_0^M \int_\mb{R} \int_0^\iy z^a \ld \ \d\|E_{\ld} \mc{F}_t U(\cdot,\sigma)\|^2 \d\sigma \d z \\
 	   & = \int_0^M \int_\mb{R} \int_0^\iy z^a \ld z^{4s} \left\vert \int_0^\iy e^{-\f{z^2}{4\tau}} e^{-(\ld+i\sigma)\tau} \f{\d\tau}{\tau^{1+s}}\right\vert^2  \d\|E_{\ld} \mc{F}_t u(\cdot,\sigma)\|^2 \d\sigma \d z \\
 	    & = \int_0^M \int_\mb{R} \int_0^\iy\ld z^{1+2s} |\ld + i\sigma|^{2s} \left\vert \int_0^\iy e^{-\theta} e^{-(\ld + i\sigma) \f{z^2}{4\theta}} \f{\d\theta}{\theta^{1+s}}\right\vert^2 \d\|E_{\ld} \mc{F}_t u(\cdot,\sigma)\|^2 \d\sigma \d z \\
 	    & \approx \int_0^M \int_\mb{R} \int_0^\iy \ld z^{1-2s} \left\vert \left( z \rt{\ld+i\sigma} \right)^{s} K_{s} \left(z \rt{\ld+i\sigma} \right) \right\vert^2 \d\|E_{\ld} \mc{F}_t u(\cdot,\sigma)\|^2 \d\sigma \d z\ \text{(using \eqref{Bessel func} with $\nu=-s$)}
	    \\&=J_1+J_2,
 	\end{align*}
 	where $J_1$ is integration over the region where  $|z \rt{\ld + i\sigma}| \le 1$ and    $J_2$ is the integral over the complement. We find
 	\begin{align}
 	 \nn J_1=  & \int_\mb{R} \int_0^\iy \int_0^{|\ld + i\sigma|^{-\f{1}{2}}} \ld z^{1-2s} \left\vert \left( z \rt{\ld+i\sigma} \right)^{s} K_{s} \left(z \rt{\ld+i\sigma} \right) \right\vert^2 \d\|E_{\ld} \mc{F}_t u(\cdot,\sigma)\|^2 \d\sigma \d z \\
 	  \nn & \preceq \underset{|z| \le 1}{\tn{sup}} \left\vert z^{s} K_{s}(z) \right\vert^2 \int_\mb{R} \int_0^\iy \left( \int_0^{|\ld + i\sigma|^{-\f{1}{2}}} z^{1-2s} \d z \right) \ld \ \d\|E_{\ld} \mc{F}_t u(\cdot,\sigma)\|^2 \d\sigma  \\
 	  \label{energy 3}  & \preceq \int_\mb{R} \int_0^\iy |\ld + i\sigma|^{s-1} \ld \ \d\|E_{\ld} \mc{F}_t u(\cdot,\sigma)\|^2 \d\sigma \preceq \|u\|_{\mb{H}^s (\mb{R}^{n+1})}.
 	\end{align}
 	Likewise, $J_2$ is estimated in the following way.
 	\begin{align}
 	  \nn J_2=& \int_\mb{R} \int_0^\iy \int_{|\ld + i\sigma|^{-\f{1}{2}}}^\iy \ld z^{1-2s} \left\vert \left( z \rt{\ld+i\sigma} \right)^{s} K_{s} \left(z \rt{\ld+i\sigma} \right) \right\vert^2 \d\|E_{\ld} \mc{F}_t u(\cdot,\sigma)\|^2 \d\sigma \d z \\
 	  \nn  & \preceq \int_\mb{R} \int_0^\iy \left( \int_{|\ld + i\sigma|^{-\f{1}{2}}}^\iy  z \left\vert K_{s}\left( z \rt{\ld+i\sigma} \right) \right\vert^2 \d z \right) \ld |\ld + i\sigma|^{s} \d\|E_{\ld} \mc{F}_t u(\cdot,\sigma)\|^2 \d\sigma \d z \\
 	  \nn & \preceq \int_\mb{R} \int_0^\iy \left(\int_{|\ld + i\sigma|^{-\f{1}{2}}}^\iy e^{-z |\ld + i\sigma|^{\f{1}{2}}}\f{\d z}{|\ld + i\sigma|^{\f{1}{2}}} \right) |\ld + i\sigma|^{1+s} \ \d\|E_{\ld} \mc{F}_t u(\cdot,\sigma)\|^2 \d\sigma \\
 	   \label{energy 4} & \preceq \int_\mb{R} \int_0^\iy \left( \int_{1}^\iy e^{-m} \ \d m \right) |\ld + i\sigma|^{s} \ \d\|E_{\ld} \mc{F}_t u(\cdot,\sigma)\|^2 \d\sigma \preceq \|u\|_{\mb{H}^s(\mb{R}^{n+1})}.
 	\end{align}
 	where we have again used the asymptotic 
 	$ \vert K_{s}(z) \vert^2 = \large{O}\left( \f{e^{-|z|}}{|z|} \right)$ for $|z| \ge 1$.
 	From the inequalities \eqref{energy 3} and \eqref{energy 4}, we conclude that $\|\nabla_x U\|_{L^2 (\mb{R}^{n+1} \times (0,M); z^a \d x \d t \d z)} \preceq \|u\|_{\mb{H}^s(\mb{R}^{n+1})}$. 	This finishes the proof of the theorem.

 	\end{proof}

	\subsection{Fundamental solution of the extension problem}\label{fundsol}

   We now  introduce the fundamental solution $ \mc{G}(Y,X,t) $ associated to the extended operator
   \begin{align*}
   	\mc{L}_a & := x_{n+1}^a \dd_t - \tn{div} \left( x_{n+1}^a A(x,t) \nabla \right)
   \end{align*}

   $X=(x, x_{n+1}), Y= (y, y_{n+1})$ will denote  generic points in $\mb{R}^n \times \mb{R}.$  It is to be noted that   $x_{n+1}$ will play the role of extension variable $z$ that was introduced in Subsection \ref{ext}.
   
   For a function $f$, we let
   \begin{equation}\label{normal}
   	\py f= \operatorname{lim}_{x_{n+1} \to 0^+} x_{n+1}^a \partial_{n+1} f(x, x_{n+1}).
   \end{equation}
   This limit is interpreted in $\mb{H}^{-s}$ sense in Theorem \ref{energy} ( where $z$ plays the role of $x_{n+1}$)  but would be eventually interpreted in the strong point wise sense in Section \ref{Carleman estimate and Unique continuation} once we have the regularity result in Lemma \ref{reg}.

   We now recall that it was shown in \cite{Gcm} that given $\phi \in C_0^{\infty}(\mb{R}^{n+1}_+)$ the solution of the Cauchy problem with Neumann condition
   \begin{align}\label{CN}
   	\begin{cases}
   		\mc{L}_a U=0 \hspace{2mm} &\text{in}  \hspace{2mm}\mb{R}^{n+1}_+ \times (0,\infty)\\
   		U(X,0)=\phi(X), \hspace{2mm} & X \in \mb{R}^{n+1}_+,\\
   		\py U(x, 0, t) = 0 \hspace{2mm} & x\in \Rn,\ t \in (0, \infty)
   	\end{cases}
   \end{align}
   is given by the formula 
   \begin{equation}\label{Ga}
   	\mathscr P^{(a)}_t \phi(Y) \overset{def}{=} U(Y,t) = \int_{\mb{R}^{n+1}_+} \phi(X) \mc{G} (Y,X,t) x_{n+1}^a dX,
   \end{equation}
   where 
   \begin{equation}\label{fund}
   	\mc{G}(Y,X,t) = p(y,x, t) \ p_a(x_{n+1},y_{n+1};t), \end{equation}  and where $p(y,x,t)$ is the heat-kernel associated to $\left(\dd_t - \tn{div}(A(x,t)\nabla_{x})\right)$ as in \eqref{stoc} and $p_a$ is the fundamental solution of  the Bessel operator $\dd^2_{x_{n+1}} + \f{a}{x_{n+1}}\dd_{x_{n+1}}$. Such a function $p_a$ is given by the formula
   \begin{equation}\label{pa}
   	p_a(x_{n+1}, y_{n+1}; t) = (2t)^{- \f{1+a}{2}} e^{-\f{x_{n+1}^2 + y_{n+1}^2}{4t}} \left( \f{x_{n+1} y_{n+1}}{2t} \right)^{ \f{1-a}{2}} I_{\f{a-1}{2}}\left( \f{x_{n+1} y_{n+1}}{2t} \right),
   \end{equation}
   where $I_\nu (z)$ the modified Bessel function of the first kind defined by the series
   
   \begin{align}\label{besseries}
   	I_{\nu}(w) = \sum_{k=0}^{\infty}\frac{(w/2)^{\nu+2k} }{\Gamma (k+1) \Gamma(k+1+\nu)}, \hspace{4mm} |w| < \infty,\; |\operatorname{arg} w| < \pi.
   \end{align}
   
   It follows from \eqref{stoc} that
   \begin{equation}\label{stoc1}
   	\int_{\mb{R}^{n+1}_+} x_{n+1}^a \mc{G}(Y,X,t) dX=1,
   \end{equation}
   and also \begin{equation}\label{st}\mathscr P^{(a)}_t \phi(X) \underset{t\to 0^+}{\longrightarrow} \phi(X).\end{equation}
   
   We finally record the following Gaussian bounds for $p(y,x, t)$  as in \cite{Ar}  which will be needed in Section \ref{Carleman estimate and Unique continuation}    %
   \begin{align}\label{gbd}
   	&  \f{1}{N_0 t^{n/2}} e^{-\f{N_0 |x - y|^2}{t}} \le p(y,x;t) \le \f{N_0}{t^{n/2}} e^{-\f{|x - y|^2}{N_0 t}}.\end{align}

 	  	\subsection{The initial-exterior problem for $\mc{H}^s + q(x,t)$} 		
 	In this subsection, we discuss some well-posedness  results for  the forward problem \eqref{ini-ext prob}. More generally, we consider the presence of a non-trivial source term in the PDE, i.e. we look for  existence and uniqueness results of the problem 
 	\begin{align}\label{gen in-ext prob}
    \begin{cases}
      \left( \mc{H}^s + q(x,t) \right)u = F, & \tn{ in } Q:= \Omega \times (-T,T)\\
      u(x,t) = f(x,t), & \tn{ in } Q_e:= \Omega_e \times (-T,T) \\
      u(x,t) = 0, & \tn{ for } t \leq -T,
   \end{cases}
   \end{align}
   where $\Omega_e$ denotes the complement of $\Omega$.

 	We consider the bilinear map $\mc{B}_q(\cdot, \cdot)$ on $ \mb H^s(\mb{R}^{n+1}) \times \mb H^s(\mb{R}^{n+1})$ defined by
 	\begin{align*}
 	    \mc{B}_q \left( f,g \right) = \langle \mc{H}^{\f{s}{2}}f, \mc{H}_*^{\f{s}{2}} g \rangle + \int_{Q} q f \overline g .
 	\end{align*}
 	It follows by an application of Cauchy-Schwartz inequality that
 	\begin{align}\label{continuity}
 	    \lvert \mc{B}_q \left( f,g \right) \rvert \preceq \|f\|_{\mb H^s (\mb{R}^{n+1})} \|g\|_{\mb H^s(\mb{R}^{n+1})}.
 	\end{align}
   Akin to that in \cite{CLR},  it turns out that we need to study a time localized problem.  Therefore to this end, we introduce the notation 
   \begin{align}\label{tct}
       u_T(x,t) = u(x,t) \chi_{[-T,T]}(t)
   \end{align}
 	and note that, $u_T \in \mb{H}^s(\mb{R}^{n+1})$ whenever $u\in \mb{H}^s(\mb{R}^{n+1})$. This follows from the fact that $\chi_{[-T,T]}$ is a multiplier in the Sobolev space $H^{\gamma}(\mathbb R)$ for $|\gamma| \leq \frac{1}{2}$. See for instance \cite[Theorem 11.4 in Chapter 1]{LM}.  Thus we cast all the upcoming analysis for $u_T$ as we can only guarantee the uniqueness  up to $ t = T $.  Also it is to be noted that  from the representation of $\mc{H}^s$ as in \eqref{defn of frac parabolic} it follows that $\mc{H}^s u (x,t) = \mc{H}^s (\chi_{(-\infty, T]} u) (x,t)$ for all $t \leq T$.

	Below we simply denote the distributional pairing $ \langle \cdot, \cdot\rangle_{\chd, \ch} $ by $ \langle \cdot, \cdot \rangle$ where $\chd$ denotes the dual space. 	%
 	\begin{definition}(Weak solutions)\label{wk}
 	    Consider $\Omega$ to be an open bounded set in $\mb{R}^{n}$ and $T>0$. For $F \in \chd$ and $f \in \mc{H}^s\left(Q_e\right)$, we say $u\in \mb{H}^s(\mb{R}^{n+1})$ to be a weak solution of \eqref{gen in-ext prob} if  $ v:= \left(u-f\right)_{T} \in \ch$ and
 	    \begin{align*}
 	        \mc{B}_q \left( u, \phi \right) = \langle F,\phi \rangle, \qd  \fa \phi \in \ch
 	    \end{align*}
 	    or equivalently,
 	    \begin{align*}
 	        \mc{B}_q \left( v, \phi \right) = \langle F - (\mc{H}^s+q) f,\phi \rangle,  \qd  \fa \phi \in \ch.
 	    \end{align*}
 	\end{definition}
 	    
 	Now, we state the well-posedness results of the initial-exterior problem \eqref{ini-ext prob}.     
 	\begin{theorem}\label{well-posedness theorem for q}
 	 Let $\Omega$ be an open bounded set in $\mb{R}^{n}$ and $T>0$. Consider $q \in L^{\iy}(Q), f \in \mc{H}^s(Q_e)$ and $F\in \chd$. Then there exists a countable set of real numbers $\Sigma:= \{\ld_i\}_{1\le i \le \iy}$ such that $ \ld_1 \le \ld_2 \le \ld_3 \le ... \to \iy $ such that given $\ld \in \mb{R} \setminus \Sigma$, there exists a unique  $u_T \in \mb{H}^s(\mb R^{n+1})$  with $(u-f)_T \in \ch$ for which  	 %
 	 \begin{align*}
 	    \left(\mc{H}^s + q(x,t) - \ld \right)u_T = F, \qd \tn{ in } Q.\\
 	     	 \end{align*}
 	 Moreover $u_T$ satisfies $\|u_T\|_{\mb{H}^s(\mb{R}^{n+1})} \preceq \left(\|F\|_{\chd} + \|f\|_{\mc{ H}^s(Q_e)}\right)$. 
 	\end{theorem}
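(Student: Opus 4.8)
The plan follows the classical scheme for such mixed initial--exterior problems: reduce to homogeneous exterior data, prove a G{\aa}rding-type inequality for $\mc{B}_q$ on $\ch$ after a spectral shift, and then apply the Fredholm alternative in the parameter $\lambda$. For the reduction, I would fix an extension of $f$ to $\mb{H}^s(\mb{R}^{n+1})$, still written $f$, with $\|f\|_{\mb{H}^s(\mb{R}^{n+1})}\le 2\|f\|_{\mc{H}^s(Q_e)}$, and seek $u_T$ of the form $u_T=(u-f)_T+f_T$ with $v:=(u-f)_T\in\ch$. By Definition~\ref{wk}, $u_T$ solves $(\mc{H}^s+q-\lambda)u_T=F$ in $Q$ if and only if
\[
\mc{B}_q(v,\phi)-\lambda\int_Q v\,\overline\phi=\big\langle F-(\mc{H}^s+q-\lambda)f,\ \phi\big\rangle\qquad\text{for all }\phi\in\ch .
\]
By the mapping property $\mc{H}^s:\mb{H}^s\to\mb{H}^{-s}$ of \eqref{mapping}, the boundedness of $q$, and $F\in\chd$, the right-hand side equals $\langle G,\phi\rangle$ for some $G\in\chd$ with $\|G\|_{\chd}\preceq_\lambda\|F\|_{\chd}+\|f\|_{\mc{H}^s(Q_e)}$, so it suffices to solve this variational problem for $v$.

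\textbf{Coercivity after a shift.} For $v\in\ch$, the spectral identity behind \eqref{mapping} gives $\langle\mc{H}^s v,v\rangle=\int_{\mb{R}}\int_0^\infty(\mu+i\sigma)^s\,d\|E_\mu(\mc{F}_t v(\cdot,\sigma))\|^2\,d\sigma$. Since the spectrum of $-\mc{L}$ is nonnegative, $\arg(\mu+i\sigma)\in[-\tfrac{\pi}{2},\tfrac{\pi}{2}]$, so $\mathrm{Re}\,(\mu+i\sigma)^s\ge\cos(\tfrac{\pi s}{2})\,|\mu+i\sigma|^s$; combining this with the elementary bound $(1+|\mu+i\sigma|^2)^{s/2}\le 2^s(1+|\mu+i\sigma|^s)$ and with $\mathrm{supp}\,v\subseteq\overline{Q}$ (so that $\|v\|_{L^2(\mb{R}^{n+1})}=\|v\|_{L^2(Q)}$) I would obtain
\[
\mathrm{Re}\,\mc{B}_q(v,v)\ \ge\ c_0\,\|v\|_{\mb{H}^s(\mb{R}^{n+1})}^2-C_0\,\|v\|_{L^2(Q)}^2 ,
\]
with $c_0>0$ and $C_0\ge0$ depending only on $s$ and $\|q\|_{L^\infty(Q)}$. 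Together with the continuity \eqref{continuity}, this shows that the shifted form $\mc{A}(v,\phi):=\mc{B}_q(v,\phi)+C_0\int_Q v\,\overline\phi$ is bounded and coercive on the Hilbert space $\ch$, hence by the Lax--Milgram lemma induces an isomorphism $\mc{A}:\ch\to\chd$ with $\|\mc{A}^{-1}\|\le c_0^{-1}$; the same estimate shows that $\phi\mapsto\mc{B}_q(v,\phi)-\lambda\int_Q v\,\overline\phi$ is itself coercive whenever $\lambda\le-C_0$, which already settles those $\lambda$.

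\textbf{Fredholm alternative and conclusion.} Writing the variational problem as $\mc{A}(v,\phi)-(\lambda+C_0)\int_Q v\,\overline\phi=\langle G,\phi\rangle$, I would represent $(v,\phi)\mapsto\int_Q v\,\overline\phi$ by an operator $K:\ch\to\chd$ which is compact, since it factors through the inclusion $\ch\hookrightarrow L^2(Q)$; the latter is compact because elements of $\ch$ are supported in the fixed bounded set $\overline{Q}$ and carry $s>0$ derivatives in the parabolic scale. Thus $T:=\mc{A}^{-1}K$ is a compact operator on $\ch$ and the problem becomes $(\mathrm{Id}-(\lambda+C_0)T)v=\mc{A}^{-1}G$. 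By Riesz--Schauder theory the spectrum of $T$ is discrete and accumulates only at $0$, so $\mathrm{Id}-(\lambda+C_0)T$ is invertible for every real $\lambda$ outside a set $\Sigma\subset\mb{R}$ with no finite accumulation point; by the coercivity of the previous step $\Sigma$ contains no $\lambda\le-C_0$, hence it is bounded below and can be enumerated (with multiplicities) as $\lambda_1\le\lambda_2\le\cdots$, which tends to $+\infty$ unless $\Sigma$ is finite. For $\lambda\in\mb{R}\setminus\Sigma$ one then gets a unique $v\in\ch$ with $\|v\|_{\mb{H}^s}\le\|(\mathrm{Id}-(\lambda+C_0)T)^{-1}\|\,\|\mc{A}^{-1}\|\,\|G\|_{\chd}$, and $u_T:=v+f_T$ is the sought unique solution; the asserted bound follows from the estimate on $\|G\|_{\chd}$ above together with $\|f_T\|_{\mb{H}^s}\preceq\|f\|_{\mb{H}^s(\mb{R}^{n+1})}\preceq\|f\|_{\mc{H}^s(Q_e)}$, the first inequality holding because $\chi_{[-T,T]}$ is a multiplier on $H^{1/2}(\mb{R})$ as recalled after \eqref{tct}.

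\textbf{Main obstacle.} The crucial step is the coercivity estimate: it hinges on the fact that the Balakrishnan symbol $(\mu+i\sigma)^s$ stays in the sector $\{|\arg(\cdot)|\le\pi s/2\}$, so that its real part is comparable to $|\mu+i\sigma|^s$ and therefore controls the full $\mb{H}^s$ norm up to an $L^2(Q)$ error; without this one cannot even get started. The only other mildly delicate point is the compactness of $\ch\hookrightarrow L^2(Q)$ for the anisotropic parabolic space, and the remaining arguments are the routine Lax--Milgram-plus-Fredholm package.
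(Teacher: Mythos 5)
Your proposal is correct and follows essentially the same route as the paper: reduction to homogeneous exterior data, coercivity of the shifted form via the sectoriality of the symbol $(\lambda+i\sigma)^s$ (i.e.\ $\operatorname{Re}(\lambda+i\sigma)^s\ge \cos(\pi s/2)\,|\lambda+i\sigma|^s$), Lax--Milgram, and then compactness of the embedding into $L^2(Q)$ together with the Fredholm alternative to handle all $\lambda$ outside a countable set. The only cosmetic difference is that you phrase the compactness through the operator $\mc{A}^{-1}K$ on $\ch$ while the paper works with the solution operator $G_\mu$ on $L^2(Q)$, which is the same argument.
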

 	\begin{proof}
 	We argue as in Theorem 3.2 in \cite{CLR}. Let $ v:= (u-f)_T  $ and $ \widetilde{F} := F - \left(\mc{H}^s + q \right)f $. 
 	%
 	%
 	 We first show the coercivity of the bilinear map $ \mc{B}_{q}(v,w) + \mu \int_Q vw $, for $\mu$ large enough.  For $ w \in \ch $ and $ \mu \ge \|\tn{min}\{q,0\}\|_{L^\iy(Q)} $, we notice that
 	\begin{align}\label{io1}
 	   \mc{B}_{q}(w,w) & + \mu \int_Q |w|^2(x,t) \d x\d t  = \langle \mc{H}^{\f{s}{2}}w, \mc{H}_*^{\f{s}{2}} w \rangle + \int_Q \left( \mu + q(x,t) \right) |w|^2(x,t) \ \d x\d t \\
 	 \nn & = \int_\mb{R} \int_{0}^\iy \left( \ld + i \sigma \right)^s \d \|E_\ld (\mc{F}_t w)(\cdot,\sigma)\|^2 \d\sigma + + \int_Q \left( \mu + q(x,t) \right) |w|^2(x,t) \ \d x\d t \\
 	  \nn  & = \int_\mb{R} \int_{0}^\iy \left\vert \ld + i \sigma \right\vert^s( \cos (s\theta) + i \operatorname{sin}(s\theta)) \d \|E_\ld (\mc{F}_t w)(\cdot,\sigma)\|^2 \d\sigma + \int_Q \left( \mu + q(x,t) \right) |w|^2(x,t) \ \d x\d t\\
	  \nn  & = \int_\mb{R} \int_{0}^\iy \left\vert \ld + i \sigma \right\vert^s \cos (s\theta) \d \|E_\ld (\mc{F}_t w)(\cdot,\sigma)\|^2 \d\sigma + \int_Q \left( \mu + q(x,t) \right) |w|^2(x,t) \ \d x\d t,	  
	  \end{align}
	  where $\operatorname{tan}(\theta)= \frac{\sigma}{\lambda}$ and where we utilized that $\operatorname{sin}(s\theta)$ is an odd function  in the last identity above.  Since $\lambda \geq 0$, it is seen that $\theta \in (-\pi/2, \pi/2)$ and thus for a fixed $0<s<1$ 
	  \begin{equation}\label{io}
	  \operatorname{cos}(s\theta) \geq \cos(s \pi/2) \overset{def}= c_s>0.\end{equation}  
	  Using \eqref{io} along with \eqref{iden} in  \eqref{io1} we obtain
	  \begin{align}\label{ct1}
	&   \mc{B}_{q}(w,w)  + \mu \int_Q |w|^2(x,t) \d x\d t\\
	    	 \nn & \succeq \int_{\mb{R}^{n+1}} \left\vert |\xi|^2 + i\rho \right\vert^s \left\vert \hat{w}(\xi, \rho) \right\vert^2 \d \xi\d \rho  \succeq \int_{\mb{R}} \left\| \left( -\Delta_x \right)^{\f{s}{2}} \mc{F}_t w (\cdot, \sigma) \right\|^2_{L^2(\mb{R}^n)} \d\sigma \succeq \|w\|_{L^2(\mb{R}^{n+1})}
 	\end{align}
 	where in the last inequality, we  used Hardy-Littlewood-Sobolev inequality in $x$ variable combined with the fact $w$ is compactly supported. Thus from \eqref{continuity} and \eqref{ct1} we conclude that the bilinear form $\mc{B}_{q}(v,w) + \mu \int_Q vw$ is coercive and bounded. Thus by Lax-Milgram theorem, there is a unique solution $ v = G_\mu \widetilde{F} \in \ch $ which satisfies
 	\begin{align*}
 	    \mc{B}_{q}(v,w) + \mu \int_Q vw = \langle \widetilde{F}, w \rangle, \qd \fa w \in \ch.
 	\end{align*}
 	alongwith the bound
 	\begin{align}\label{stability of solution}
 	    \|v\|_{\ch} \preceq \|\widetilde{F}\|_{\chd}
 	\end{align}
 	From \eqref{stability of solution}, we find $ \|u_T\|_{\mb{H}^s(\mb{R}^{n+1})} \preceq \left( \|F\|_{\chd} + \|f\|_{\mc{H}^s(Q_e)} \right)$.
 	In particular, \eqref{stability of solution} implies that the source to solution map i.e $G_{\mu}: \chd \to \ch$ is continuous.  Thus using  \eqref{iden}, by an  application of the compact Sobolev embedding we deduce that
 	\begin{align*}
 	    G_\mu : L^2(Q) \to L^2(Q)
 	\end{align*}
 	is a compact operator and therefore  by  the spectral theorem,  there exists a countable set of eigenvalues of $G_{\mu}$  which are $\f{1}{\ld_i + \mu}$ with $ \ld_i \to \iy$. This is evident from the following observation
 	\begin{align*}
 	    \mc{B}_{q}(v,w) - \ld \int_{Q} vw = \langle \widetilde{F} + (\mu+\ld)v, w\rangle
 	\end{align*}
 	Also it is not hard to see $ \Sigma: = \{\ld_i\}_{1 \le i \le \iy} \subseteq \left( - \|\tn{min}\{q,0\}\|_{L^{\iy}(Q)}, \iy \right)$. Finally, the Fredholm alternative ensures the existence and uniqueness of the problem under consideration.
 	\end{proof} 	
 	\tb{Remark}: In view of Theorem \ref{well-posedness theorem for q}, we could rephrase the eigenvalue condition \eqref{asus} by saying $0 \notin \Sigma$. It follows from the inequality  \eqref{ct1} that for non-negative potentials i.e. when $ q \ge 0$ a.e. in $Q$, we have $\Sigma \subset \mb{R}_{+}$. The same assertion holds for small enough potentials i.e. when $\|q\|_{L^\iy(Q)}$ is small.

	 	Similarly, one can prove the well-posedness results for  the adjoint equation to \eqref{ini-ext prob} which is the future-exterior problem
 	\begin{align}\label{adjoint problem for q}
 	\begin{cases}
      \left( \mc{H}_{*}^s + q(x,t) \right)u^* = 0, & \tn{ in } Q:= \Omega \times (-T,T)\\
      u^*(x,t) = f(x,t), & \tn{ in } Q_e:= \Omega_e \times (-T,T) \\
      u^*(x,t) = 0, & \tn{ for } t \geq T.
    \end{cases}
 	\end{align}
 	The analysis here would be identical to that of initial-exterior problem \eqref{ini-ext prob} and we could have similar well-posedness result here also.  Moreover we observe that if we let	\[
	\tilde u(x,t)= u(x,-t)
	\]
	then 
	\begin{equation}\label{k}
	(\mc{H}_{*}^s u)(x,t)= (\mc{H}^s \tilde u)(x, -t).
	\end{equation}
 	Moreover from \eqref{asus}  and Fredholm alternative it follows that
	\begin{equation}\label{asus1}
	\text{$0$ is not a Dirichlet eigenvalue for the adjoint problem \eqref{adjoint problem for q}}.
	\end{equation}

 	\subsection{The initial-exterior problem for $\mc{H}^s +\langle b(x,t), \nabla_{x} \rangle+ q(x,t)$} We will introduce the notion of weak solutions for the problem \eqref{ini-ext prob for b,q}. For the weak formulation, we define   the corresponding bilinear form as follows%
 	\begin{align*}
 	    \mc{B}_{b,q} (f,g) = \langle \mc{H}^{\f{s}{2}}f, \mc{H}_*^{\f{s}{2}} g \rangle + \int_Q \langle b(x,t), \nabla_{x} f \rangle \ g + \int_{Q} q f \overline g , 
 	\end{align*}
 	where $ b \in L^\iy\left((-T,T); W^{1-s,\iy}(\Omega) \right)$ and $q\in L^\iy(Q)$.  Similar to that in \cite{CLR}, a Kato-Ponce type inequality will be used to obtain the boundedness of the term $\int_Q \langle b(x,t),\nabla_{x} f  \rangle g$ (see  \eqref{bdo} below). We now define the weak formulation of \eqref{gen in-ext prob}. 
 	\begin{definition}
 	    Let $\Omega$ be a  Lipschitz  domain  in $\mb{R}^{n}$, $s > \f{1}{2}$ and $T>0$. For $F \in \chd$ and $f \in \mc{H}^s\left(Q_e\right)$, we say that  $u\in \mb{H}^s(\mb{R}^{n+1})$ is a  weak solution of \eqref{gen in-ext prob} if   $ v:= \left(u-f\right)_{T} \in \ch$ and   
 	    \begin{align*}
 	        \mc{B}_{b,q} \left( u, \phi \right) = \langle F,\phi \rangle, \qd  \fa \phi \in \ch
 	    \end{align*}
 	    or equivalently,
 	    \begin{align*}
 	        \mc{B}_{b,q} \left( v, \phi \right) = \langle F - (\mc{H}^s + \langle b, \nabla_{x} \rangle + q) f,\phi \rangle, \qd  \fa \phi \in \ch.
 	    \end{align*}
 	\end{definition}
 	
 	Now, we state and prove  the well-posedness result for \eqref{ini-ext prob for b,q}. 
 	\begin{theorem}\label{well-posedness theorem for b,q}
 	 Let $\Omega$ be a Lipschitz domain in $\mb{R}^{n}$, $s > \f{1}{2}$ and $T>0$. 
 	 Assume $b\in L^\iy\left((-T,T); W^{1-s,\iy}(\Omega) \right)$,\\ $q \in L^{\iy}(Q), f \in \mc{H}^s(Q_e)$ and $F\in \chd$. Then there exists a countable set of real numbers $\Sigma:= \{\ld_i\}_{1\le i \le \iy}$  with $ \ld_1 \le \ld_2 \le \ld_3 \le ... \to \iy $ such that whenever  $\ld \in \mb{R} \setminus \Sigma$,    there exists a unique solution solution $u_T \in \mb{H}^s(\mb{R}^{n+1})$ satisfying
 	 \begin{align*}
 	 \begin{cases}
 	    \left(\mc{H}^s + \langle b(x,t), \nabla_{x} \rangle+ q(x,t)  - \ld \right)u_T = F, \qd \tn{ in } Q,\\
 	    u_T(x,t) = f(x,t), \qd \tn{ for } (x,t) \in Q_e.
 	 \end{cases}
 	 \end{align*}
 	 such that $\|u_T\|_{\mb{H}^s(\mb{R}^{n+1})} \preceq \left(\|F\|_{\chd} + \|f\|_{\mc{ H}^s(Q_e)}\right)$. 
 	\end{theorem}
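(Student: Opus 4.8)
The plan is to follow the proof of Theorem \ref{well-posedness theorem for q} almost verbatim, the only genuinely new point being the first order term $\langle b,\nabla_x\rangle$. First I would set $v:=(u-f)_T$ and $\widetilde F:=F-\left(\mc{H}^s+\langle b,\nabla_x\rangle+q\right)f$ and reduce the problem to finding $v\in\ch$ such that
\begin{align}\label{varfbq}
\mc{B}_{b,q}(v,w)-\ld\int_Q v\overline w=\langle\widetilde F,w\rangle,\qquad \fa\, w\in\ch .
\end{align}
For \eqref{varfbq} to be meaningful one first needs $\widetilde F\in\chd$: the pieces $\mc{H}^s f$ and $qf$ are handled exactly as in the proof of Theorem \ref{well-posedness theorem for q}, while $\langle b,\nabla_x f\rangle\in\chd$ is precisely the content of the fractional Leibniz (Kato--Ponce) estimate \eqref{bdo}. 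This is where the hypothesis $s>\f{1}{2}$ enters: writing, for $\phi\in\ch$, the pairing $\int_Q\langle b,\nabla_x f\rangle\overline\phi$ as a duality between $\nabla_x f\in\mb{H}^{s-1}(\mb{R}^{n+1})$ and $b\overline\phi$, and using that $b\in L^\iy\left((-T,T);W^{1-s,\iy}(\Omega)\right)$ acts as a multiplier on the spatial space $H^{1-s}$ (where the Lipschitz regularity of $\Omega$ is used, through extension/multiplier properties on $\Omega$), one obtains $\left|\int_Q\langle b,\nabla_x f\rangle\overline\phi\right|\preceq\|b\|_{L^\iy((-T,T);W^{1-s,\iy}(\Omega))}\|f\|_{\mb{H}^s(\mb{R}^{n+1})}\|\phi\|_{\mb{H}^{1-s}(\mb{R}^{n+1})}$, and $\mb{H}^s\hookrightarrow\mb{H}^{1-s}$ precisely because $1-s\le s$.

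The heart of the argument is boundedness and coercivity of the shifted form $(v,w)\mapsto\mc{B}_{b,q}(v,w)+\mu\int_Q v\overline w$ on $\ch\times\ch$ for $\mu$ large. Boundedness is immediate from \eqref{continuity} and \eqref{bdo}. For coercivity, I would reuse \eqref{ct1}: for $\mu_0\ge\|\min\{q,0\}\|_{L^\iy(Q)}$ large enough one has $\mc{B}_q(w,w)+\mu_0\int_Q|w|^2\succeq\|w\|_{\mb{H}^s(\mb{R}^{n+1})}^2=\|w\|_{\ch}^2$ for all $w\in\ch$ (using \eqref{io}, \eqref{iden}, and $\operatorname{supp}w\subset\overline Q$, so that $\int_Q|w|^2=\|w\|_{L^2(\mb{R}^{n+1})}^2$). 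The additional drift contribution is bounded, by the same computation as above, by $\preceq\|b\|_{L^\iy((-T,T);W^{1-s,\iy}(\Omega))}\|w\|_{\mb{H}^s(\mb{R}^{n+1})}\|w\|_{\mb{H}^{1-s}(\mb{R}^{n+1})}$. Since $1-s<s$, for every $\delta>0$ the interpolation inequality $\|w\|_{\mb{H}^{1-s}}\le\delta\|w\|_{\mb{H}^s}+C_\delta\|w\|_{L^2}$ holds, and after an application of Young's inequality the drift term is dominated by $\f{1}{2}\|w\|_{\mb{H}^s(\mb{R}^{n+1})}^2+C(\|b\|)\|w\|_{L^2(\mb{R}^{n+1})}^2$. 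Absorbing this into the lower bound for $\mc{B}_q$ and taking $\mu:=\mu_0+C(\|b\|)$ (or larger) yields $\mc{B}_{b,q}(w,w)+\mu\int_Q|w|^2\succeq\|w\|_{\ch}^2$ for all $w\in\ch$ --- crucially with \emph{no} smallness assumption on $b$, exactly because $\mb{H}^{1-s}$ sits strictly between $L^2$ and $\mb{H}^s$. This absorption step is the main obstacle and the sole substantial departure from the $q$-only case; it is also the reason for the restriction $s>\f{1}{2}$.

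Granted boundedness and coercivity, the remainder is identical to Theorem \ref{well-posedness theorem for q}. Lax--Milgram provides, for this fixed $\mu$, a unique $v=G_\mu\widetilde F\in\ch$ solving \eqref{varfbq} with $\ld$ replaced by $-\mu$, together with $\|v\|_{\ch}\preceq\|\widetilde F\|_{\chd}\preceq\|F\|_{\chd}+\|f\|_{\mc{H}^s(Q_e)}$, whence $u_T$ has the prescribed exterior values and $\|u_T\|_{\mb{H}^s(\mb{R}^{n+1})}\preceq\|F\|_{\chd}+\|f\|_{\mc{H}^s(Q_e)}$. The map $G_\mu:\chd\to\ch$ is bounded, so composing with the compact Sobolev embedding $\ch=\mb{H}^s_{\overline Q}\hookrightarrow\hookrightarrow L^2(Q)$ (valid since $\Omega$ is bounded Lipschitz) and the inclusion $L^2(Q)\hookrightarrow\chd$ shows that $G_\mu:L^2(Q)\to L^2(Q)$ is compact. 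Rewriting \eqref{varfbq} for general $\ld$ in the form $\left(I-(\mu+\ld)G_\mu\right)v=G_\mu\widetilde F$ and invoking the spectral theory of compact operators and the Fredholm alternative exactly as in Theorem \ref{well-posedness theorem for q}, one obtains a countable set $\Sigma=\{\ld_i\}$ with $\ld_1\le\ld_2\le\cdots\to\iy$, bounded below, such that for $\ld\in\mb{R}\setminus\Sigma$ the equation has a unique solution $u_T$ obeying the stated estimate.
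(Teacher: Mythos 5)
Your proposal is correct and follows essentially the same route as the paper: boundedness of $\mc{B}_{b,q}$ via the Kato--Ponce/multiplier estimate \eqref{bdo} (extending $b$ off the Lipschitz set $\Omega$ and using $s>\f{1}{2}$ so that $1-s\le s$), coercivity of the $\mu$-shifted form by absorbing the drift term through an interpolation-plus-Young argument, and then Lax--Milgram, compact Sobolev embedding, and the Fredholm alternative exactly as in Theorem \ref{well-posedness theorem for q}. The only cosmetic difference is that the paper phrases the absorption via a H\"older interpolation giving $\|w\|_{L^2}^{\f{2s-1}{s}}\|w\|_{\mb{H}^s}^{\f{1}{s}}$ before Young's inequality, while you invoke the equivalent $\ep$-interpolation inequality directly.
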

 	\begin{proof}
 	It suffices to show the boundedness of $\mc{B}_{b,q}$ and the coercivity of $\mc{B}_{b,q}(w,w) + \mu \int_Q |w|^2$ for large enough $\mu$ and then one can argue similarly as in the proof of Theorem \ref{well-posedness theorem for q}. For boundedness, we need to show 
   \[ 
   \lvert \mc B_{b,q} (u,v) \rvert \preceq \|u\|_{\mb H^s (\mb{R}^{n+1})} \|v\|_{\mb H^s (\mb{R}^{n+1})}.
   \]
   Now as before, we have
   \begin{align*}
       \left\vert \langle \mc{H}^s u ,\mc{H}_*^s v \rangle + \int_Q q uv \right\vert \preceq \|u\|_{\mb H^s (\mb{R}^{n+1})} \|v\|_{\mb H^s (\mb{R}^{n+1})}.
   \end{align*}
 Thus it suffices to show that 
   \begin{align}\label{Kato-Ponce}
    \left\vert \int_{Q} u(x,t) \langle b(x,t), \nabla_{x} v  \rangle\ \d x\d t \right\vert \preceq \ \|u\|_{\mb{H}^s(\mb{R}^{n+1})} \|v\|_{\mb{H}^s(\mb{R}^{n+1})}  
   \end{align}
   In order to establish \eqref{Kato-Ponce}, we argue as in \cite{CLR}. We first choose $B \in L^\iy\left((-T,T); W^{1-s,\iy}(\mb{R}^{n}) \right) $ with $ B = b $ a.e in $ Q $, such that 
   \[ \|B\|_{L^\iy\left((-T,T); W^{1-s,\iy}(\mb{R}^{n+1}) \right)} \le C \|b\|_{L^\iy\left((-T,T); W^{1-s,\iy}(\Omega) \right)} \]
   and notice the following estimate for all $t\in (-T,T)$ 
   \begin{align}\label{bdo}
   	\left\vert\int_\Omega u(x,t) \langle b(x,t), \nabla_{x} v(x,t) \rangle\ \d x \right\vert \\
   	& \preceq \|B(\cdot,t)u(\cdot,t)\|_{ \mb{H}^{1-s}(\mb{R}^{n})} \|\nabla_x v\|_{\mb{H}^{s-1} (\mb{R}^n)}\notag \\
   	& \preceq \|B\|_{L^\iy\left((-T,T); W^{1-s,\iy}(\mb{R}^{n}) \right)} \|u(\cdot,t)\|_{\mb{H}^{1-s}(\mb{R}^{n})} \|v(\cdot,t)\|_{\mb{H}^s(\mb{R}^{n})}\notag \\
   	& \preceq \|u(\cdot,t)\|_{\mb{H}^{s}(\mb{R}^{n})} \|v(\cdot,t)\|_{\mb{H}^s(\mb{R}^{n})}, \notag
   \end{align}
    where we crucially  used the assumption $ s > \f{1}{2} $ and  also employed the Kato-Ponce inequality  in \cite[Theorem 1]{GO_2014} to obtain
   \begin{align*}
       \|B(\cdot,t)u(\cdot,t)\|_{\mb{H}^{1-s}(\mb{R}^{n})} & \approx \left\| \mc{J}^{1-s}(B(\cdot,t)u(\cdot,t)) \right\|_{L^2(\mb{R}^{n})} \\ 
      & \preceq \|B(\cdot,t)\|_{L^\iy (\mb{R}^{n})} \|\mc{J}^{1-s} u(\cdot,t)\|_{L^2(\mb{R}^{n})} + \|\mc{J}^{1-s} B(\cdot,t)\|_{L^\infty(\mb{R}^{n})} \|u\|_{L^2 (\mb{R}^{n})}  \\
      & \preceq \|B\|_{L^\iy\left((-T,T); W^{1-s,\iy}(\mb{R}^{n}) \right)} \|u(\cdot,t)\|_{\mb{H}^s(\mb{R}^{n})} \preceq \|u(\cdot,t)\|_{\mb{H}^s(\mb{R}^{n})}.
   \end{align*}
   Here $ \mc{J}:= \left( \tn{Id} - \Delta_x\right)^{\f{1}{2}}$.
   Now we use Cauchy-Schwarz inequality and  the Plancherel theorem in the $t$ variable to find 
   \begin{align*}
   &	\left\vert \int_{Q} u(x,t) \langle b(x,t), \nabla_{x} v \rangle \ \d x\d t \right\vert  \preceq \int_{\mb{R}} \|u(\cdot,t)\|_{H^{s}(\mb{R}^{n})} \|v(\cdot,t)\|_{H^s(\mb{R}^{n})} \d t \\
   	& \preceq \left(\int_{\mb{R}^{n+1}} \left( 1 + |\xi|^{2s}\right) |\mc{F}_x u|^2(\xi,t) \d\xi\d t\right)^{1/2} \left(\int_{\mb{R}^{n+1}} \left( 1 + |\xi|^{2s}\right) |\mc{F}_x v|^2(\xi,t) \d\xi \d t\right)^{1/2} \\
   	& \preceq \|u\|_{\mb{H}^s(\mb{R}^{n+1})} \|v\|_{\mb{H}^s(\mb{R}^{n+1})}.
   \end{align*}
    Next we head towards proving the coercivity of  $ \mc{B}_{b,q} (w,w) + \mu \int_Q |w|^2$.  In this regard we observe that
   \begin{align*}
       \langle \mc{H}^{\f{s}{2}}w, \mc{H}_*^{\f{s}{2}}w\rangle + \int_{Q} w \langle b,  \nabla_{x} w \rangle + \int_{Q} q |w|^2 \ge c_0 \|f\|^2_{\mb H^s(\mb{R}^{n+1})} - \left\vert \int_{Q} w \langle b, \nabla_{x,t} w \rangle\right\vert -  \|q\|_{L^\iy (Q)} \|w\|^2_{L^2(Q)}. 
   \end{align*}
As in \eqref{bdo}, we observe that
   \begin{align*}
      \left\vert \int_{Q} w \langle b, \nabla_{x} w \rangle\right\vert & \preceq \int_\mb{R} \|w(\cdot,t)\|_{\mb H^{1-s}(\mb{R}^{n})} \|w(\cdot,t)\|_{H^s(\mb{R}^{n})} \d t \\
       & \preceq \left(\int_{\mb{R}^{n+1}} \langle\xi\rangle^{2(1-s)} |\mc{F}_x w|^2(\xi,t) \d\xi\d t\right)^{1/2} \left(\int_{\mb{R}^{n+1}} \left( 1 + |\xi|^{2s}\right) |\mc{F}_x w|^2(\xi,t) \d\xi \d t\right)^{1/2}\\ & \text{(where $\langle \xi \rangle\overset{def}= \sqrt{1+|\xi|^2}$)}  \\
       & \preceq \left(\int_{\mb{R}^{n+1}} \langle\xi\rangle^{2s} |\mc{F}_x w|^2(\xi,t) \d\xi\d t\right)^{\f{1-s}{2s}} \left(\int_{\mb{R}^{n+1}} |\mc{F}_x w|^2(\xi,t) \d\xi\d t\right)^{\f{2s-1}{2s}} \|f\|_{\mb{H}^s(\mb{R}^{n+1})}\\
       &\preceq \|w\|^{\f{2s-1}{s}}_{L^2(\mb{R}^{n+1})} \|w\|^{\f{1}{s}}_{\mb{H}^s(\mb{R}^{n+1})}. 
   \end{align*}
    Now an application of Young's inequality gives
   \begin{align*}
        \left\vert \int_{Q} w \langle b, \nabla_{x} w \rangle\right\vert \le \ep \|w\|_{\mb{H}^s(\mb{R}^{n+1})}^2 + C_\ep \|w\|^2_{L^2(Q)}.
   \end{align*}
Thus by choosing $\epsilon$ small enough, we can conclude
   \begin{align*}
      \mc{B}_{b,q}(w,w)  \ge \f{c_0}{2} \|w\|^2_{\mb H^s(\mb{R}^{n+1})} - \left( C_\ep + \|q\|_{L^\iy(Q)} \right) \|w\|^2_{L^2(Q)}
   \end{align*}
   Finally by  choosing $ \mu \ge \left( C_\ep + \|q\|_{L^\iy(Q)} \right) $, we find that the coercivity of the corresponding bilinear form follows. As previously said, the rest of the proof remains the same as that for Theorem \ref{well-posedness theorem for q}.   %
\end{proof}

 	Likewise, we have the well-posedness result for the adjoint problem defined in the following way
 	\begin{align*}
 	    \begin{cases}
      \mc{H}_{*}^s u^* - \nabla_{x} \cdot (bu^*)+ q(x,t) u^* = 0, & \tn{ in } Q:= \Omega \times (-T,T)\\
      u^*(x,t) = f(x,t), & \tn{ in } Q_e:= \Omega_e \times (-T,T) \\
      u^*(x,t) = 0, & \tn{ for } t \geq T.
    \end{cases}
 	\end{align*}

 	
 	\section{Global unique continuation property} \label{Carleman estimate and Unique continuation}
	\subsection{Carleman estimate}
	In this section, we prove the global weak unique continuation result stated in Theorem \ref{GUCP}.  We follow the strategy  in \cite{ABDG} by first establishing a conditional doubling estimate for solutions  to the extension problem  and then we use a blowup argument to reduce the problem to a weak unique continuation property for the homogeneous extension problem with constant coefficients as in Theorem \ref{wkt}. Therefore, as a first step we prove a Carleman estimate for the extended operator. Keeping in mind  the possibility  of other applications,  we allow the matrix coefficients   to depend on both space and time variables in the Carleman estimate as stated in Lemma \ref{car1} below.

	Similar to that in Subsection \ref{fundsol},  in  this section, we remind the reader of  the following  notations.  $(X,t) = (x, x_{n+1}, t), (Y,s)= (y, y_{n+1}, s)$ will denote generic points in $\mb{R}^{n} \times \mb{R} \times \mb{R}$. For a given $r>0$, we will denote by $\mathbb B_r(Y)$, the Euclidean ball  in $\mathbb{R}^{n+1}$ of radius $r$ centered at $Y$ and $B_r(y) \overset{def}= \{x: (x, 0) \in \mathbb B_r(Y) \}$. Likewise we let $\mathbb B_r^+(Y) \overset{def}= \mathbb B_r(Y) \cap \{ X:x_{n+1}>0\}$. When the center $Y$ of $\mathbb B_r(Y)$ is not explicitly indicated, then we are taking $Y = 0$. Similar agreement for the thick half-balls $\mathbb B_r^+(x_0,0)$.

	For notational ease, $\nabla U$ and  $\operatorname{div} U$ will respectively refer to the quantities  $\nabla_X U$ and $ \operatorname{div}_X U$.  The partial derivative in $t$ will be denoted by $\partial_t U$ and also at times  by $U_t$. The partial derivative $\partial_{x_i} U$  will be denoted by $U_i$ and also by $\partial_i U$. 
	
	We will assume that 	$ A(x,t) := \left( a_{ij}(x,t) \right)_{ij} $ be a $ (n+1) \times (n+1) $ is a positive definite block matrix  valued function  satisfying \eqref{ellip} with 
	\begin{align}\label{matrix}
		A(0,0) = \mathbb I_{n+1}, \ a_{(n+1)i}(x,t) = \delta_{(n+1)i}, \ \ \fa i \in \{1,2,...,n+1\},
	\end{align} 
	such that  the following Lipschitz growth condition holds
	\begin{equation}\label{lip1}
		|A(x,t)- A(y,s)| \leq K (|x-y|+|t-s|).
	\end{equation}
	It follows from \eqref{lip1} that if we let  $ B(X,t) \equiv \{b_{ij}(x,t)\}_{1 \le i,j \le n+1} := A(x,t) - \mathbb I_{n+1}$, then
	\begin{align*}
		b_{ij}(x,t) = O(|x|+t) \ \fa i,j \in \{1,2,...,n+1\} \tn{ and } b_{(n+1)i}(x,t) = 0, \ \fa i \in \{1,2,...,n+1\}.
	\end{align*}

	\hspace{0.5mm}
	
	Corresponding to $A$ as in \eqref{matrix} above, we  consider the following extended backward parabolic operator 	
	\begin{align}\label{extop}
		\widetilde{\mc{H}} & := x_{n+1}^a \dd_t + \tn{div} \left( x_{n+1}^a A(x,t) \nabla \right).
	\end{align}	
	For notational convenience, it will be  easier to work with this backward extension operator in \eqref{extop} above. Similar to that in Subsection \ref{fundsol}, it should be clear to the reader that $x_{n+1}$ plays the role of the extension variable $z$ in Subsection \ref{ext}.


	In the proof of our Carleman estimate, we adapt the approach in the fundamental works  \cite{EF_2003} and \cite{EFV_2006} to our setting of degenerate operators as in \eqref{extop} and this has required some delicate adaptations. It is to be mentioned that although our method is inspired by ideas in \cite{EF_2003, EFV_2006},  nevertheless at a technical level, our proof of the Carleman estimate somewhat differs from that in \cite{EF_2003} even in the case when $a=0$. The proof of such an estimate in \cite{EF_2003} relies on first establishing a generic Rellich type identity  with respect to appropriate Carleman weights in the Gaussian space  ( see \cite[Lemma 1]{EF_2003}). This is then  combined with a clever use  of some  logarithmic inequalities  as stated in Lemma \ref{rl} below which is needed  to absorb certain error terms that arises due to  the perturbation of the variable coefficient principal part.   Differently from that in \cite{EF_2003}, in our proof we instead analyse the positivity property  of the associated conjugate operator directly. Our method however also uses  the ODE satisfied by the Carleman weight given in Lemma \ref{sigma} below in the same spirit as in \cite{EF_2003, EFV_2006}. We are of the opinion that our proof revisits the approach in \cite{EF_2003, EFV_2006} with a somewhat different viewpoint which can possibly be of independent interest.

	Before we state  our main Carleman estimate, we first  gather some  relevant results from \cite{EF_2003}  that are crucially needed in our context. The following  result which is Lemma 4 in \cite{EF_2003} is regarding the existence  of a suitable weight function $\sigma$ which has the    appropriate pseudo-convexity property   needed for the  Carleman estimate.
	
	\begin{lemma} \label{sigma}
		Let
		\begin{equation}\label{theta'}\theta(t) = t^{\f{1}{2}} \left( \log \f{1}{t} \right)^{\f{3}{2}}.\end{equation}  Then the solution to the ordinary differential equation 
		$$\frac{d}{dt} \log \left(\frac{\sigma}{t\sigma'}\right)= \frac{\theta(\lambda t)}{t},~\sigma(0)=0,~\sigma'(0)=1,$$
		where $\lambda >0,$ has the following properties when $0\leq \lambda t\leq 1$:
		\begin{enumerate}
			\item $t e^{-N} \leq \sigma(t) \leq t,$
			\item $e^{-N} \leq \sigma'(t)\leq 1,$
			\item $|\partial_t[\sigma \log \frac{\sigma}{\sigma' t}]|+|\partial_t[\sigma \log \frac{\sigma}{\sigma' }]|\leq 3N$,
			\item $\left|\sigma \partial_t \left(\frac{1}{\sigma'}\partial_t[\log \frac{\sigma}{\sigma'(t)t}]\right)\right| \leq 3N e^{N} \frac{\theta(\gamma t)}{t},$
		\end{enumerate}	
		where $N$ is some universal constant.
	\end{lemma}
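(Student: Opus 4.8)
The plan is to integrate the ordinary differential equation explicitly --- despite appearances it is essentially first order --- and then to read off (1)--(4) by differentiating the resulting closed forms, in the spirit of \cite{EF_2003, EFV_2006}. Set $v(t) := \log\bigl(\sigma(t)/(t\sigma'(t))\bigr)$. The conditions $\sigma(0)=0$, $\sigma'(0)=1$ force $\sigma(t)/t \to 1$ as $t\to 0^+$, hence $v(0)=0$; since the right-hand side $\theta(\lambda t)/t = \lambda^{1/2}t^{-1/2}(\log\frac{1}{\lambda t})^{3/2}$ is integrable at the origin, the equation gives at once
\begin{align*}
v(t) = \int_0^{\lambda t} \frac{\theta(u)}{u}\, du .
\end{align*}
Then $v$ determines $\sigma$: the relation $v = \log\bigl(\sigma/(t\sigma')\bigr)$ is equivalent to $\sigma' = (\sigma/t)e^{-v}$, i.e. $\partial_t\log(\sigma/t) = (e^{-v(t)}-1)/t$, whose right-hand side is again integrable at $0$ because $v(0)=0$. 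Writing $I(t) := \int_0^t (1-e^{-v(\tau)})\,d\tau/\tau$ we obtain
\begin{align*}
\sigma(t) = t\,e^{-I(t)}, \qquad \sigma'(t) = e^{-I(t)-v(t)}, \qquad \sigma''(t) = -\frac{e^{-I(t)-v(t)}}{t}\bigl(1 - e^{-v(t)} + \theta(\lambda t)\bigr),
\end{align*}
which one checks has the prescribed initial values.

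The next step is to record uniform bounds on $\{0\le\lambda t\le 1\}$, with constants independent of $\lambda$. Since $\theta\ge 0$ one has $v\ge 0$, and integrability of $u^{-1/2}(\log\frac{1}{u})^{3/2}$ on $(0,1)$ gives $v(t)\le c_\theta := \int_0^1 u^{-1/2}(\log\frac{1}{u})^{3/2}\,du < \infty$. Using $1-e^{-v}\le v$ together with Fubini's theorem (and $\lambda t\le 1$),
\begin{align*}
0 \le I(t) \le \int_0^t \frac{v(\tau)}{\tau}\, d\tau = \int_0^{\lambda t} \frac{\theta(u)}{u}\log\frac{\lambda t}{u}\, du \le \int_0^1 u^{-1/2}\Bigl(\log\frac{1}{u}\Bigr)^{5/2} du =: c_I < \infty .
\end{align*}
Setting $N := c_\theta + c_I$ (enlarged below if needed), the identity $\sigma = te^{-I}$ yields (1), and $\sigma' = e^{-I-v}$ together with $I+v\le N$ yields (2).

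For (3)--(4) the crucial observation is that the equation says $\log\bigl(\sigma/(\sigma'(t)t)\bigr) = v(t)$ \emph{identically}, so these expressions are differentiated through their closed forms. Thus $\sigma\log\frac{\sigma}{\sigma' t} = \sigma v$ and $\partial_t(\sigma v) = \sigma' v + \sigma v' = e^{-I-v}v + e^{-I}\theta(\lambda t)$, which is bounded by $c_\theta + \sup_{[0,1]}\theta$; the companion term in (3) is treated the same way from the formulas for $\sigma,\sigma',\sigma''$ above. For (4) one has $\frac{1}{\sigma'}\partial_t[\log\frac{\sigma}{\sigma' t}] = \frac{\theta(\lambda t)}{t}e^{I+v}$; differentiating once more and multiplying by $\sigma = te^{-I}$, the terms $-\theta(\lambda t)/t$ (from $\partial_t(1/t)$) and $+\theta(\lambda t)/t$ (from $\partial_t\log(\sigma/t)$) partially cancel, leaving
\begin{align*}
\sigma\,\partial_t\Bigl(\frac{1}{\sigma'}\partial_t[\log\tfrac{\sigma}{\sigma' t}]\Bigr) = e^{v}\lambda\theta'(\lambda t) + e^{v}\frac{\theta(\lambda t)}{t}\bigl(\theta(\lambda t) - e^{-v}\bigr) .
\end{align*}
Here one uses the self-improving scaling of $\theta$: from $\theta(s) = s^{1/2}(\log\frac{1}{s})^{3/2}$ one computes $s\theta'(s) = \tfrac12\theta(s) - \tfrac32 s^{1/2}(\log\frac{1}{s})^{1/2}$, so $|\lambda\theta'(\lambda t)| \le \tfrac1t\bigl(\tfrac12\theta(\lambda t) + \tfrac32(\lambda t)^{1/2}(\log\frac{1}{\lambda t})^{1/2}\bigr)$; on the relevant range each summand, as well as $\theta(\lambda t)^2/t$ and $\theta(\lambda t)/t$, is dominated by $\theta(\gamma t)/t$, and combined with $v\le c_\theta$ this gives (4) with a constant of the shape $3Ne^{N}$.

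I expect (4) to be the main obstacle: the weight must be differentiated \emph{twice} in $t$ yet still be controlled by the single borderline-singular quantity $\theta(\gamma t)/t$, which forces careful bookkeeping of the cancellation above, a verification that $\lambda\theta'(\lambda t)$ and the perturbative factors $1-e^{-v}$ and $\sigma''/\sigma'$ are of exactly the right order, and --- throughout all of (1)--(4) --- keeping every constant universal, i.e. independent of the large parameter $\lambda$. Everything else is a direct (if lengthy) computation with the explicit $\sigma,\sigma',\sigma''$ of the first step, exactly as in \cite{EF_2003, EFV_2006}.
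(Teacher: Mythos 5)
The paper never proves this lemma; it is quoted verbatim as Lemma 4 of \cite{EF_2003}, so there is no in-paper argument to compare with and your proposal has to stand on its own. Its core is sound: setting $v=\log\bigl(\sigma/(t\sigma')\bigr)$ you correctly get $v(t)=\int_0^{\lambda t}\theta(u)\,du/u$, $\sigma=te^{-I}$, $\sigma'=e^{-I-v}$ with $0\le v\le c_\theta$, $0\le I\le c_I$ (the Fubini bound is right), which gives (1), (2) and the first half of (3); your identity $\sigma\,\partial_t\bigl(\tfrac{1}{\sigma'}\partial_t\log\tfrac{\sigma}{t\sigma'}\bigr)=e^{v}\lambda\theta'(\lambda t)+e^{v}\tfrac{\theta(\lambda t)}{t}\bigl(\theta(\lambda t)-e^{-v}\bigr)$ is also correct.

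Two steps, however, do not go through as written. First, the companion term in (3) is \emph{not} "treated the same way": your own formulas give $\sigma/\sigma'=te^{v}$, hence $\partial_t\bigl[\sigma\log\tfrac{\sigma}{\sigma'}\bigr]=e^{-I-v}(\log t+v)+e^{-I}\bigl(1+\theta(\lambda t)\bigr)$, which contains $\sigma'\log t$ and is unbounded on $\{0\le\lambda t\le 1\}$ (take $t=\lambda^{-2}$, so $|\log t|=2\log\lambda$, while the claimed bound $3N$ is universal). So the quantity as printed cannot be bounded; your computation in fact shows the displayed form is a misquotation of \cite{EF_2003} — the quantity your method does bound is, e.g., $\partial_t\bigl[\sigma\log\tfrac{1}{\sigma'}\bigr]=e^{-I-v}(I+v)+e^{-I}\bigl(1-e^{-v}+\theta(\lambda t)\bigr)$ — and a correct write-up must either prove such a corrected version or flag the discrepancy, not assert boundedness of a term that blows up.

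Second, in (4) the domination of the summand $\tfrac{3}{2t}(\lambda t)^{1/2}\bigl(\log\tfrac{1}{\lambda t}\bigr)^{1/2}$ coming from $\lambda\theta'(\lambda t)$ by $C\,\theta(\lambda t)/t$ requires $\log\tfrac{1}{\lambda t}\ge c>0$: as $\lambda t\to 1$ one has $\theta(\lambda t)\sim\bigl(\log\tfrac{1}{\lambda t}\bigr)^{3/2}\ll\bigl(\log\tfrac{1}{\lambda t}\bigr)^{1/2}$, and the two terms in your identity have the same sign, so there is no cancellation to save it. Your argument therefore proves (4) only with $\lambda t$ bounded away from $1$ (e.g. $\lambda t\le e^{-1}$), or with $\theta(\gamma t)$ on the right for a fixed $\gamma\le\lambda/e$ — presumably the meaning of the otherwise unexplained $\gamma$ in the statement, and harmless for the paper's application where $t\le\tfrac{1}{3\lambda}$ — but you must say which, rather than claim that every summand "is dominated by $\theta(\gamma t)/t$ on the relevant range." As it stands, the step you yourself identified as the main obstacle is not closed, and (3) is asserted incorrectly.
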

	
	We also need the following  real analysis lemma from \cite{EF_2003}.  See lemma 3.3 in \cite{EF_2003}.
	\begin{lemma}\label{rl}
		Given $m>0$, $\exists C_m$ such that for all $y \ge 0$ and $0 < \epsilon < 1$, 
		\begin{equation}\label{ein}
			y^m e^{-y} \le C_m \left [ \epsilon + \left(\log(\frac{1}{\epsilon}) \right )^m e^{-y} \right].
		\end{equation}
	\end{lemma}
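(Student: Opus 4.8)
The plan is to prove \eqref{ein} by a dichotomy on the size of $y$ relative to $L := \log(1/\epsilon)$, which is positive since $0 < \epsilon < 1$. In the regime $0 \le y \le 2L$ one simply bounds the polynomial factor: $y^m \le (2L)^m = 2^m \big(\log(1/\epsilon)\big)^m$, whence $y^m e^{-y} \le 2^m \big(\log(1/\epsilon)\big)^m e^{-y}$, which is controlled by the second term on the right-hand side with the constant $2^m$.

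In the complementary regime $y > 2L$, the idea is to peel off half of the exponential and write $y^m e^{-y} = \big(y^m e^{-y/2}\big)\, e^{-y/2}$. Since $y/2 > L$ we have $e^{-y/2} < e^{-L} = \epsilon$, and by elementary calculus $\sup_{z \ge 0} z^m e^{-z/2} = (2m/e)^m$, attained at $z = 2m$; hence $y^m e^{-y} \le (2m/e)^m \,\epsilon$, which is controlled by the first term. Taking $C_m := 2^m + (2m/e)^m$ (or the maximum of the two) then yields \eqref{ein} for all admissible $y$ and $\epsilon$.

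There is no genuine obstacle here, but one subtlety is worth flagging: one should \emph{not} split at $y = L$, since in that case, after extracting $e^{-y} < \epsilon$, the leftover polynomial $y^m$ is not bounded uniformly in $\epsilon$. Splitting at $y = 2L$ (any threshold strictly larger than $L$ would serve) is precisely what leaves a full factor $e^{-y/2}$ available to absorb $\epsilon$ after the polynomial has been estimated, making the argument uniform in $\epsilon$.
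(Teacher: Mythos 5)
Your argument is correct: the split at $y=2\log(1/\epsilon)$ works, since for $y\le 2\log(1/\epsilon)$ one has $y^m e^{-y}\le 2^m(\log(1/\epsilon))^m e^{-y}$, while for $y>2\log(1/\epsilon)$ the factorization $y^m e^{-y}=(y^m e^{-y/2})e^{-y/2}\le (2m/e)^m\,\epsilon$ gives the other term, so $C_m=\max\{2^m,(2m/e)^m\}$ suffices. Note that the paper does not prove Lemma \ref{rl} at all — it is quoted from Lemma 3.3 of \cite{EF_2003} — and your dichotomy is precisely the standard proof of that elementary inequality, including the correct observation that splitting at $y=\log(1/\epsilon)$ itself would leave an unbounded polynomial factor.
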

	Finally, we also need the following Hardy type inequality in the Gaussian space which  can be found in Lemma 2.2 in \cite{ABDG}.  This can be regarded as the weighted analogue of  Lemma 3 in \cite{EFV_2006}.
	\begin{lemma}[Hardy type inequality]\label{hardy}
		For all $h \in C_0^{\infty}(\overline{\mb{R}^{n+1}_+})$ and $b>0$ the following inequality holds
		\begin{align*}
			& \int_{\mb{R}^{n+1}_+} x_{n+1}^a h^2 \frac{|X|^2}{8b} e^{-|X|^2/4b}dX \leq  2b \int_{\mb{R}^{n+1}_+} x_{n+1}^a|\nabla h|^2 e^{-|X|^2/4b} dX
			\\
			& + \frac{n+1+a}{2} \int_{\mb{R}^{n+1}_+}x_{n+1}^a h^2  e^{-|X|^2/4b}  dX.
		\end{align*}
	\end{lemma}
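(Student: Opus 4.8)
The plan is to prove this by a standard integration-by-parts argument against the Gaussian weight, exploiting the fact that the radial vector field $X$ has divergence $n+1$ (plus the contribution of the $x_{n+1}^a$ factor), and then completing the square. Concretely, set $G(X) = e^{-|X|^2/(4b)}$ and note that $\nabla G = -\frac{X}{2b} G$, so that $\operatorname{div}(x_{n+1}^a X\, G) = x_{n+1}^a\big((n+1+a) - \frac{|X|^2}{2b}\big) G$. First I would consider, for $h \in C_0^\infty(\overline{\mathbb{R}^{n+1}_+})$, the integral
\begin{align*}
  \int_{\mathbb{R}^{n+1}_+} x_{n+1}^a h^2 \frac{|X|^2}{2b} G\, dX
  &= \int_{\mathbb{R}^{n+1}_+} x_{n+1}^a h^2\big((n+1+a) G - \operatorname{div}(x_{n+1}^a X G)/x_{n+1}^a\big)\,dX,
\end{align*}
and then integrate the divergence term by parts. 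Because $a \in (-1,1)$, the weight $x_{n+1}^a$ is locally integrable up to $\{x_{n+1}=0\}$, and since $X = (x,x_{n+1})$ has vanishing $(n+1)$-st component on that face, there is no boundary contribution from $\{x_{n+1}=0\}$; the contribution at infinity vanishes by the Gaussian decay and the compact support of $h$. This yields
\begin{align*}
  \int_{\mathbb{R}^{n+1}_+} x_{n+1}^a h^2 \frac{|X|^2}{2b} G\, dX
  = (n+1+a)\int_{\mathbb{R}^{n+1}_+} x_{n+1}^a h^2 G\, dX
  + 2\int_{\mathbb{R}^{n+1}_+} x_{n+1}^a h\, \langle X, \nabla h\rangle\, G\, dX.
\end{align*}

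Next I would estimate the cross term $2\int x_{n+1}^a h\,\langle X,\nabla h\rangle\, G\,dX$ by Cauchy--Schwarz in the form $2|h\langle X,\nabla h\rangle| \le \frac{|X|^2}{4b} h^2 + 4b\,|\nabla h|^2$ (pointwise, with the splitting chosen so that the $h^2$ term can be absorbed). Substituting this bound gives
\begin{align*}
  \int_{\mathbb{R}^{n+1}_+} x_{n+1}^a h^2 \frac{|X|^2}{2b} G\, dX
  \le (n+1+a)\int_{\mathbb{R}^{n+1}_+} x_{n+1}^a h^2 G\, dX
  + \frac14\int_{\mathbb{R}^{n+1}_+} x_{n+1}^a h^2 \frac{|X|^2}{b} G\, dX
  + 4b\int_{\mathbb{R}^{n+1}_+} x_{n+1}^a |\nabla h|^2 G\, dX.
\end{align*}
Moving the term $\frac14\int x_{n+1}^a h^2 \frac{|X|^2}{b} G\,dX = \frac12 \int x_{n+1}^a h^2 \frac{|X|^2}{2b} G\,dX$ to the left-hand side leaves $\frac12\int x_{n+1}^a h^2 \frac{|X|^2}{2b}G\,dX$ on the left, i.e. $\int x_{n+1}^a h^2 \frac{|X|^2}{4b}G\,dX \le (n+1+a)\int x_{n+1}^a h^2 G\,dX + 4b\int x_{n+1}^a|\nabla h|^2 G\,dX$. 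A final division by $2$ produces the stated inequality with the constants $2b$ in front of the gradient term and $\frac{n+1+a}{2}$ in front of the zeroth order term.

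The only genuinely delicate point is the justification that the boundary term at $\{x_{n+1}=0\}$ vanishes in the integration by parts: one must check that $x_{n+1}^a$ times the relevant flux is integrable near the face and that its trace is zero. This is where the structural hypotheses $a \in (-1,1)$ and the vanishing of the $(n+1)$-component of the radial field $X$ on $\{x_{n+1}=0\}$ are used; a clean way to handle it rigorously is to integrate over $\{x_{n+1}>\delta\}$, obtain a boundary term on $\{x_{n+1}=\delta\}$ of the form $\delta^a \int_{\{x_{n+1}=\delta\}} h^2\, \delta\, G\,dx = \delta^{1+a}\int h^2 G\,dx \to 0$ as $\delta\to 0^+$ since $1+a>0$, and pass to the limit using dominated convergence (the weight $x_{n+1}^a$ being locally integrable). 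Everything else is elementary manipulation of Gaussian integrals.
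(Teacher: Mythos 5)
Your proof is correct: the divergence identity $\operatorname{div}(x_{n+1}^a X G)=x_{n+1}^a\big((n+1+a)-\tfrac{|X|^2}{2b}\big)G$, the vanishing of the flux through $\{x_{n+1}=0\}$ because the boundary term scales like $\delta^{1+a}$ with $1+a>0$, and the Cauchy--Schwarz splitting $2|h\langle X,\nabla h\rangle|\le \tfrac{|X|^2}{4b}h^2+4b|\nabla h|^2$ followed by absorption reproduce the stated constants exactly. The paper itself does not prove the lemma but quotes it from Lemma 2.2 of \cite{ABDG} (the weighted analogue of Lemma 3 in \cite{EFV_2006}), and the argument there is essentially the same standard Gaussian integration-by-parts computation you carried out, so your proposal is a correct self-contained version of the cited proof.
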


	We now state and prove our main Carleman estimate which constitutes the generalization of the Carleman estimate  in \cite[Lemma 6]{EFV_2006}  to degenerate operators of the type \eqref{extop}. 
	
	\begin{theorem}\label{carl1}

		Let $ \widetilde{\mc{H}}$ be the   backward in time extension operator  in \eqref{extop} where  $A(x,t)$ is  a matrix valued function satisfying \eqref{matrix} and \eqref{lip1}. Let $ w \in C_0^\iy \left( \overline{\mb{B}^+_4} \times [0,\left. \f{1}{3\ld}\right) \right)$  such that $\py w\equiv 0$ on $\{x_{n+1}=0\}$ where $ \ld = \frac{\alpha}{ \delta^2}$ for some  $ \delta \in (0,1) $. Then the following estimate holds  for all large $\alpha$	 and $\delta$ sufficiently small  %
		\begin{align}\label{car1}
			& \alpha^2 \int_{\mb{R}^{n+1}_+ \times [c, \iy)} x_{n+1}^{a} \sigma^{-2 \alpha}(t) \ w^2 \ G  + \alpha \int_{\mb{R}^{n+1}_+ \times [c, \iy)} x_{n+1}^{a} \sigma^{1-2 \alpha}(t)\  |\nabla w|^2 \ G  \\
			& \preceq  \int_{\mb{R}^{n+1}_+ \times [c, \iy)} \sigma^{ 1-2 \alpha}(t) x_{n+1}^{-a} \ \lvert \widetilde{\mc{H}} w \rvert ^2 \ G  + \alpha^{c' \alpha} \ \tn{sup}_{t \ge c} \int_{\mb{R}^{n+1}_+} x_{n+1}^{a} \left( w^2 + t |\nabla w|^2 \right) \ \d X \notag\\
			& + \sigma^{-2 \alpha}(c) \left\{ - c \int_{t=c} x_{n+1}^a \ |\nabla w(X,c)|^2 \ G(X,c) \ \d X + \alpha \int_{t=c} x_{n+1}^a \ |w(X,c)|^2 \ G(X,c) \ \d X\right\}.\notag
		\end{align}
		Here $\sigma$ is as in Lemma \ref{sigma},  $G(X,t) = \f{1}{{t^\f{n+1+a}{2}}} e^{-\f{|X|^2}{4t}}$ and $0 < c \le \f{1}{5\ld}. $
	\end{theorem}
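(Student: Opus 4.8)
The plan is to follow the approach of Escauriaza--Fernandez--Vessella but to work directly with the conjugate operator rather than via a generic Rellich identity. First I would introduce the Carleman weight in the form $G(X,t)=t^{-\frac{n+1+a}{2}}e^{-|X|^2/4t}$ and conjugate: set $v=\sigma^{-\alpha}(t)\,x_{n+1}^{a/2}\,G^{1/2}\,w$ (or an equivalent substitution that turns the weighted $L^2$ quantities on the left into the unweighted $L^2$ norm of $v$ and its gradient), and compute the conjugated operator $\mathcal{L}_\alpha v := \sigma^{-\alpha}x_{n+1}^{-a/2}G^{-1/2}\,\widetilde{\mathcal H}(\sigma^{\alpha}x_{n+1}^{a/2}G^{1/2}v)$. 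Splitting $\mathcal{L}_\alpha$ into a symmetric part $S$ and an antisymmetric part $\mathcal{A}$ with respect to the $L^2(x_{n+1}^a\,dX\,dt)$ (or plain $dX\,dt$) inner product, the identity $\|\mathcal{L}_\alpha v\|^2 = \|Sv\|^2 + \|\mathcal{A}v\|^2 + \langle [S,\mathcal{A}]v,v\rangle$ reduces everything to establishing the positivity (modulo controllable errors) of the commutator $[S,\mathcal{A}]$ together with boundary contributions coming from integration by parts in $t$ at $t=c$ and from the thick face $\{x_{n+1}=0\}$ (the latter is handled precisely because $\partial^a_{n+1}w\equiv 0$ there, so the weighted Neumann term vanishes).

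The core computation is the commutator estimate. Here the two pieces are: (i) the ``flat'' part, where $A=\mathbb I_{n+1}$ and one recovers the constant-coefficient Carleman inequality; the positivity of the leading $\alpha^2\sigma^{-2\alpha}G$ and $\alpha\sigma^{1-2\alpha}|\nabla w|^2\,G$ terms comes out of the algebra of the Gaussian together with properties (1)--(4) of $\sigma$ in Lemma~\ref{sigma} (these control exactly the error terms $\partial_t[\sigma\log\frac{\sigma}{\sigma' t}]$, $\sigma\partial_t(\frac1{\sigma'}\partial_t[\cdots])$, etc.\ that appear when one differentiates the weight in time); and (ii) the perturbation $B(X,t)=A(X,t)-\mathbb I_{n+1}$, which by \eqref{lip1} and \eqref{matrix} satisfies $|B(X,t)|\lesssim K(|x|+t)$ and has vanishing last row. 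The terms generated by $B$ are of two types: those with an explicit factor $|x|+t$ multiplying $|\nabla w|^2 G$, and those with $|\nabla w|\,|\nabla^2 w|$-type structure that must be integrated by parts. To absorb the first type against the good term $\alpha\sigma^{1-2\alpha}|\nabla w|^2 G$, one uses that on the support (times $\le \frac{1}{3\lambda}$, and the Gaussian weight) the factor $(|x|+t)\,|X|^2/t$ appearing after completing the square is of the form $y^m e^{-y}$ with $y=|X|^2/(4t)$, and Lemma~\ref{rl} converts this into $\epsilon + (\log\frac1\epsilon)^m e^{-y}$; choosing $\epsilon$ a suitable negative power of $\alpha$ (this is where the factor $\alpha^{c'\alpha}$ in the zeroth-order ``junk'' term on the right-hand side is born) lets the $\epsilon$-part be absorbed by $\alpha\sigma^{1-2\alpha}|\nabla w|^2 G$ and the $(\log\frac1\epsilon)^m$-part be dominated by a fractional power of $\alpha$ times the same good term, provided $\alpha$ is large and $\delta$ (hence $\lambda t$) is small. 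The Hardy-type inequality of Lemma~\ref{hardy} is used to trade the zeroth-order Gaussian term $w^2\,|X|^2 G$ against $|\nabla w|^2 G$ wherever the commutator produces it, and to close the loop between the $\alpha^2 w^2 G$ and $\alpha|\nabla w|^2 G$ terms.

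Finally I would collect the boundary terms. Integration by parts in $t$ over $[c,\infty)$ produces, at $t=c$, exactly the bracketed expression $\sigma^{-2\alpha}(c)\{-c\int x_{n+1}^a|\nabla w(X,c)|^2 G(X,c)+\alpha\int x_{n+1}^a|w(X,c)|^2 G(X,c)\}$ on the right (with the correct sign coming from the monotonicity direction of $\sigma^{-2\alpha}$); there is no contribution at $t=+\infty$ because $w$ is compactly supported in time, and no lateral contribution in $X$ by compact support in space. The remaining error from the lowest-order coupling of $B$, after the Lemma~\ref{rl} splitting, is exactly the $\alpha^{c'\alpha}\sup_{t\ge c}\int x_{n+1}^a(w^2+t|\nabla w|^2)$ term. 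Putting $\lambda=\alpha/\delta^2$ and taking $\alpha$ large, $\delta$ small, guarantees $0\le \lambda t\le 1$ on the support (since $t\le \frac{1}{3\lambda}$) so that all four estimates in Lemma~\ref{sigma} are in force, and the inequality \eqref{car1} follows.

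\textbf{Main obstacle.} The hard part will be the bookkeeping of the variable-coefficient error terms: showing that every term produced by $B=A-\mathbb I$ in the commutator $[S,\mathcal A]$ either carries a factor $(|x|+t)$ that, after completing the square in the Gaussian and applying Lemma~\ref{rl}, is absorbable by $\alpha\sigma^{1-2\alpha}|\nabla w|^2 G$ up to the explicitly allowed $\alpha^{c'\alpha}$-weighted remainder, or can be integrated by parts (using that $B$ has vanishing $(n+1)$-st row, so no boundary term on $\{x_{n+1}=0\}$ is created and the degenerate weight $x_{n+1}^a$ is not differentiated in a harmful way) into such a form. Keeping the dependence on $\alpha$, $\delta$ and the Lipschitz constant $K$ explicit throughout — so that the smallness of $\delta$ and largeness of $\alpha$ can genuinely close the estimate — is the delicate point, and is precisely the ``subtle adaptation'' of \cite{EF_2003,EFV_2006} alluded to in the introduction.
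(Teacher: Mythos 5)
Your proposal is correct and follows essentially the same route as the paper: the paper also conjugates $\widetilde{\mc{H}}$ by the weight (setting $v=\sigma^{-\alpha}(t)e^{-|X|^2/(8t)}w$ while keeping the measure $x_{n+1}^a\,dX\,dt$), analyses the positivity of the conjugated operator directly by splitting it into a transport piece $P\sim\mc{Z}v=(2t\partial_t+X\cdot A\nabla)v$ and a remainder $Q$ and estimating the cross terms $\int PQ$ (which is exactly your symmetric/antisymmetric commutator bookkeeping up to boundary terms at $t=c$), with Lemma \ref{sigma} supplying the pseudoconvexity that produces the $\alpha^2$ and $\alpha|\nabla w|^2$ terms, Lemma \ref{rl} (with $\epsilon=(\ld t)^{2\alpha+\mu+3/2}$) absorbing the $O(|X|+t)$ errors coming from $A-\mathbb{I}_{n+1}$ and generating the $\alpha^{c'\alpha}$ remainder, and the Neumann condition $\py w=0$ killing the thin-face boundary terms. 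The only minor difference is that in the paper the Hardy inequality of Lemma \ref{hardy} is needed solely for the $|X|^3$ boundary term at $t=c$, the bulk $|X|^k w^2 G$ errors being handled by Lemma \ref{rl} alone, but this does not affect the validity of your outline.
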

	\begin{proof}
		Let $\theta$ be as in Lemma \ref{sigma}.   For $t\in[0,\f{1}{3\ld})$, we first make the preliminary observation that
		\begin{align}\label{large}
			\f{\theta(\ld t)}{t} \ge \ld^{1/2} t^{-\f{1}{2}} \left( \log 3 \right)^{\f{3}{2}} \ge \sqrt{3}\ld \left( \log 3 \right)^{\f{3}{2}} \geq \ld .
		\end{align}
		Also, with a slight abuse of notation, we treat the quantity $- \left(\f{t\sigma'}{\sigma}\right)'$ as $\f{\theta(\ld t)}{t}$ since the term $\f{t\sigma'}{\sigma}$ is positively bounded from both sides in view of Lemma \ref{sigma}. The solid integrals below will be taken in $ \mb{R}^n \times [c, \iy) $ where $ 0 < c \le \f{1}{\ld} $ and we refrain from mentioning explicit limits in the rest of our discussion. \\

		Note that we have the following equivalent expression for $\widetilde{\mc{H}}$ 
		\[ x_{n+1}^{-\f{a}{2}} \widetilde{\mc{H}} = x_{n+1}^{\f{a}{2}} \left( \dd_t + \tn{div}(A(x,t)\nabla) + \f{a}{x_{n + 1}} \dd_{n+1}\right).\]
		Consider the conjugation 
		\[ w(X,t) = \sigma^\alpha(t) e^{\f{|X|^2}{8t}} v(X,t). \]
		We then note that
		\begin{align}\label{sto1}
			w_t = e^{\f{|X|^2}{8t}} \left( \sigma^\alpha(t) v_t + \alpha \sigma^{\alpha - 1}(t) \sigma'(t) v - \f{|X|^2}{8t^2}\sigma^\alpha(t) v \right), \ \nabla w = e^{\f{|X|^2}{8t}} \sigma^\alpha(t) \left( \nabla v + \f{X}{4t} v \right).
		\end{align}
		From \eqref{sto1} we find
		\begin{align*}
			& \tn{div} (A(x,t)\nabla w) = \tn{div} \left( \sigma^\alpha(t) e^{\f{|X|^2}{8t}} A(x,t) \left( \nabla v + \f{X}{4t} v \right) \right) \\ 
			& = \sigma^\alpha(t) e^{\f{|X|^2}{8t}} \left[ \tn{div} (A(x,t)\nabla v) + \f{\langle X, A(x,t) \nabla  v \rangle} {2t} + \left( \f{\langle X, A(x,t) X \rangle}{16 t^2} + \f{\tn{div} (A(x,t) \cdot X)} {4t} \right) v \right]
		\end{align*}
		Now we define the vector field 
		\begin{equation}\label{defz} \mc{Z} := 2t \dd_t + X \cdot A(x,t) \nabla \end{equation}
		and combine the preceding observations to deduce
		\begin{align*}
			x_{n+1}^{-\f{a}{2}} \sigma^{-\alpha}(t) e^{-\f{|X|^2}{8t}} \widetilde{\mc{H}}w 
			& = x_{n+1}^{\f{a}{2}}  \left[ \tn{div}\left( A(x,t)\nabla v\right) + \f{1}{2t} \mc{Z}v + \left( \f{\tn{div}(A(x,t) X) + a} {4t} + \f{\alpha \sigma'} {\sigma}\right) v \right.\\
			& \hspace{5cm}\left. + \left( \f{\langle X, A(x,t) X \rangle}{16 t^2} - \f{|X|^2}{8t^2}\right) v  + \f{a}{x_{n+1}} \dd_{n+1} v \right].
		\end{align*}
		Using \eqref{matrix} and \eqref{lip1}, we further note that the following relations hold for $(X,t)$ varying in a compact set containing the origin 
		\begin{align}\label{1}
			&  \tn{div}(A(x,t)X) = n+1 + O(|X|+t),  \qd \qd  \langle X, A(x,t) X \rangle = |X|^2 + |X|^2 O(|X|+t), \end{align}
		\begin{align}\label{2}
			&  \tn{div}(x_{n+1}^a A(x,t)X) = x_{n+1}^a (\tn{div}(A(x,t)X) + a) = x_{n+1}^a \left( n+1+a+O(|X|+t) \right) .
		\end{align}
		Next we consider the expression
		\begin{align}\label{ex1}
			&  \int \sigma^{-2 \alpha}(t) t^{-\mu} x_{n+1}^{-a} e^{-\f{|X|^2}{4t}} \left(\f{t \sigma'}{\sigma}\right)^{-\f{1}{2}}  \lvert \widetilde{\mc{H}} w \rvert ^2 \\ 
			&= \int x_{n+1}^{a} t^{-\mu} \left(\f{t \sigma'}{\sigma}\right)^{-\f{1}{2}} \left[ \tn{div}\left( A(x,t)\nabla v\right) + \f{1}{2t} \mc{Z}v + \f{a}{x_{n+1}} \dd_{n+1} v \right.\notag \\
			&  \left. + \left( \f{\tn{div}(A(x,t) X) + a} {4t} + \f{\alpha \sigma'}{\sigma} \right) v + \left( \f{\langle X, A(x,t) X \rangle}{16 t^2} - \f{|X|^2}{8t^2}\right) v  \right] ^2,\notag
		\end{align}
		where $\mu$ is to be chosen later. Then we estimate the integral \eqref{ex1} from below with an application of the algebraic inequality
		\[ \int P^2 + 2 \int PQ \le \int \left( P + Q \right)^2 \]
		where $P$ and $Q$ are chosen as
		\begin{align*}
			& P = \f{x_{n+1}^{\f{a}{2}} t^{-\f{\mu + 2}{2}}}{ 2} \left(\f{t \sigma'}{\sigma}\right)^{-\f{1}{4}} \mc{Z}v, \\ 
			& Q = x_{n+1}^{\f{a}{2}} t^{-\f{\mu}{2}} \left(\f{t \sigma'}{\sigma}\right)^{-\f{1}{4}} \left[ \left( \f{\tn{div}(A(x,t) X) + a} {4t} + \f{\alpha \sigma'}{\sigma} \right) v + \tn{div}\left( A(x,t)\nabla v\right) + \f{a \dd_{n+1}v}{x_{n+1}} \right. \\
			& \hspace{4cm} \left. + \f{\langle X, A(x,t) X \rangle - 2 |X|^2}{16 t^2} v  \right].
		\end{align*} 
		To establish the Carleman estimate, we calculate all the terms coming from the cross product, i.e. from $\int PQ.$ We write $$ \int PQ: = \sum_{k=1}^4 \mc{I}_k,$$ where
		\begin{align*}
			&  \mc{I}_1 = \int x_{n+1}^{a} t^{-\mu} \left(\f{t \sigma'}{\sigma}\right)^{-\f{1}{2}} \f{1}{2t} \mc{Z}v \left( \f{\tn{div}(A(x,t) X) + a}{4t} + \f{\alpha \sigma'} {\sigma} \right)v, \\
			&  \mc{I}_2 = \int x_{n+1}^a t^{-\mu} \left(\f{t \sigma'}{\sigma}\right)^{-\f{1}{2}} \f{\mc{Z}v} {2t} \ \tn{div} \left( A(x,t)\nabla v \right), \\
			& \mc{I}_3 = \int x_{n+1}^a t^{-\mu} \left(\f{t \sigma'}{\sigma}\right)^{-\f{1}{2}} \f{\mc{Z}v} {2t} \left( \f{\langle X, A(x,t) X \rangle}{16 t^2} - \f{|X|^2}{8t^2}\right) v, \\
			&  \mc{I}_4 = \int x_{n+1}^{a} t^{-\mu} \left(\f{t \sigma'}{\sigma}\right)^{-\f{1}{2}} \f{\mc{Z}v} {2t} \f{a \dd_{n+1}v}{x_{n+1}}.
		\end{align*}
		
		We start with the term $\mc{I}_1$. We have
		\begin{align}
			\nn \mc{I}_1 & = \int x_{n+1}^{a} t^{-\mu} \left(\f{t \sigma'}{\sigma}\right)^{-\f{1}{2}} \f{1}{2t} \mc{Z}v \left( \f{\tn{div}(A(x,t) X) + a}{4t} + \f{\alpha \sigma'} {\sigma} \right)v \\
			\nn & = \int x_{n+1}^{a} t^{-\mu} \left(\f{t \sigma'}{\sigma}\right)^{-\f{1}{2}} \f{\mc{Z}v}{2t} \left( \f{n+1+a + O(|X|+t)}{4t} + \f{\alpha \sigma'} {\sigma}  \right) v \\
			\nn & = \f{n+1+a}{8} \int x_{n+1}^{a} t^{-\mu-2} \left(\f{t \sigma'}{\sigma}\right)^{-\f{1}{2}} \mc{Z}\left( \f{v^2}{2} \right) + \f{\alpha}{2} \int x_{n+1}^{a} t^{-\mu-2} \left(\f{t \sigma'}{\sigma}\right)^{\f{1}{2}} \mc{Z}\left( \f{v^2}{2} \right) \\
			\nn  & \hspace{0.7cm} + \int x_{n+1}^{a} t^{-\mu-2} \left(\f{t \sigma'}{\sigma}\right)^{-\f{1}{2}} \mc{Z}v \ O(|X|+t)v,
		\end{align}
		which after employing the AM-GM inequality  to the term  $\int x_{n+1}^{a} t^{-\mu-2} \left(\f{t \sigma'}{\sigma}\right)^{-\f{1}{2}} \mc{Z}v \ O(|X|+t)v$  can be  bounded from below in the following way
		\begin{align}\label{e0}
			& \mc{I}_1 \geq \f{n+1+a}{8} \int x_{n+1}^{a} t^{-\mu-2} \left(\f{t \sigma'}{\sigma}\right)^{-\f{1}{2}} \mc{Z}\left( \f{v^2}{2} \right)         + \f{\alpha}{2} \int x_{n+1}^{a} t^{-\mu-2} \left(\f{t \sigma'}{\sigma}\right)^{\f{1}{2}} \mc{Z}\left( \f{v^2}{2} \right) 
			\\
			& \hspace{0.7cm} -\ep \int x_{n+1}^{a} t^{-\mu-2} \left(\f{t \sigma'}{\sigma}\right)^{-\f{1}{2}} |\mc{Z}v|^2 + \f{O(1)}{\ep} \int x_{n+1}^{a} t^{-\mu} \left(\f{t \sigma'}{\sigma}\right)^{-\f{1}{2}} \left(\f{|X|^2 v^2}{t^2} + v^2 \right).\notag
		\end{align}
		We remark here that $ \ep>0 $ will be chosen in a way so  that the term 
		\[ -\ep \int x_{n+1}^{a} t^{-\mu-2} \left(\f{t \sigma'}{\sigma}\right)^{-\f{1}{2}} |\mc{Z}v|^2 \]
		gets absorbed in $\f{1}{2} \int P^2$. We would  also like to  mention that the term $ \f{O(1)}{\ep} \int x_{n+1}^{a} t^{-\mu} \left(\f{t \sigma'}{\sigma}\right)^{-\f{1}{2}} \left(\f{|X|^2 v^2}{t^2} + v^2 \right)$ will be eventually estimated favourably  by using the log-inequality stated in Lemma \ref{rl}.	  See \eqref{error 1}-\eqref{error 2} below. 
		Therefore, we first  engage our attention on the terms
		\[ \f{n+1+a}{8} \int x_{n+1}^{a} t^{-\mu-2} \left(\f{t \sigma'}{\sigma}\right)^{-\f{1}{2}} \mc{Z}\left( \f{v^2}{2} \right) \tn{ and } \  \f{\alpha}{2} \int x_{n+1}^{a} t^{-\mu-2} \left(\f{t \sigma'}{\sigma}\right)^{\f{1}{2}} \mc{Z}\left( \f{v^2}{2} \right). \]
		We  choose $\mu$ such that
		\[
		\tn{div}_{{X,t}}(x_{n+1}^a t^{-\mu-2} \mc{Z}(0,0))=0.
		\]
		Note that $\mc{Z}(0,0)= X \cdot \nabla_X + 2t \partial_t$. This implies that 
		\begin{equation}\label{mu}
			\mu=\frac{n-1+a}{2}.\end{equation} 
		With such a choice of $\mu$,  by integrating by  parts and also by using \eqref{1}  we  then observe
		\begin{align}\label{e2}
			&  \f{n+1+a}{8} \int x_{n+1}^{a} t^{-\mu-2} \left(\f{t \sigma'}{\sigma}\right)^{-\f{1}{2}} \mc{Z}\left( \f{v^2}{2} \right) \\
			& = \f{(n+1+a)}{16} \int x_{n+1}^{a} t^{-\mu-1} \left(\f{t \sigma'}{\sigma}\right)^{-\f{3}{2}}\left(\f{t \sigma'}{\sigma}\right)' v^2 + O(1) \int x_{n+1}^{a} t^{-\mu-2} \left(\f{t \sigma'}{\sigma}\right)^{-\f{1}{2}} \left(|X|+t \right) v^2\notag  \\
			&  - \left( \f{n+1+a}{8} \right) \ c^{-\mu-1} \left(\f{c \sigma'(c)}{\sigma(c)}\right)^{-\f{1}{2}} \int_{t=c} x_{n+1}^{a} v^2(X,c) \ \d X.\notag
		\end{align}
		Similarly we find
		\begin{align}\label{e40}
			& \f{\alpha}{2} \int x_{n+1}^{a} t^{-\mu-2} \left(\f{t \sigma'}{\sigma}\right)^{\f{1}{2}} \mc{Z}\left( \f{v^2}{2} \right) \\
			& = -\f{\alpha}{4} \int x_{n+1}^{a} t^{-\mu-1} \left(\f{t \sigma'}{\sigma}\right)^{-\f{1}{2}} \left(\f{t \sigma'}{\sigma}\right)' v^2  - \f{\alpha}{2} c^{-\mu-1} \left(\f{c \sigma'(c)}{\sigma(c)}\right)^{-\f{1}{2}} \int_{t=c} x_{n+1}^{a} v^2(X,c) \ \d X\notag \\
			& + O(1) \ \alpha \int x_{n+1}^{a} t^{-\mu-2} \left(\f{t \sigma'}{\sigma}\right)^{\f{1}{2}} \left(|X|+t \right) v^2.\notag
		\end{align}
		From \eqref{e0}, \eqref{e2} and \eqref{e40} it follows  using Lemma \ref{sigma} that the following inequality holds for all $\alpha$ large
		\begin{align}\label{nm1}
			& \mc{I}_1  \succeq \alpha \int x_{n+1}^{a} t^{-\mu-1} \f{\theta(\ld t)} {t} v^2 - \alpha \int x_{n+1}^{a} t^{-\mu-2} |X| v^2 - \alpha c^{-\mu-1}\int_{t=c} x_{n+1}^{a} v^2(X,c) \ \d X \\
			&-\ep \int x_{n+1}^{a} t^{-\mu-2} \left(\f{t \sigma'}{\sigma}\right)^{-\f{1}{2}} |\mc{Z}v|^2\notag \\	     & \succeq \alpha \int x_{n+1}^{a} \sigma^{-2\alpha}(t) \f{\theta(\ld t)}{t} \ w^2  G - \alpha \int x_{n+1}^{a} t^{-\mu-2} \sigma^{-2\alpha}(t) |X| \ w^2 e^{-\f{|X|^2}{4t}} - \alpha \sigma^{-2\alpha}(c) \int_{t=c} x_{n+1}^{a} w^2  G \notag
			\\
			&-\ep \int x_{n+1}^{a} t^{-\mu-2} \left(\f{t \sigma'}{\sigma}\right)^{-\f{1}{2}} |\mc{Z}v|^2, \notag	 \end{align}
		where all  terms with sub-critical power in $t$ can be absorbed in $\alpha \int x_{n+1}^{a} t^{-\mu-1} \f{\theta(\ld t)} {t} v^2$  by using the largeness of $\f{\theta(\ld t)}{t}$ as observed  in \eqref{large} above.
		
		Next we consider the term $\mc{I}_2$ which  finally contributes the positive  gradient terms in our Carleman estimate.  This is accomplished by a Rellich type argument. By integrating by parts and also by using $\py v=0$, we find 	
		\begin{align}\label{k0}
			& \mc{I}_2  = \int x_{n+1}^a t^{-\mu} \left(\f{t \sigma'}{\sigma}\right)^{-\f{1}{2}} \f{\mc{Z}v} {2t} \ \tn{div} \left( A(x,t)\nabla v \right) = \int x_{n+1}^a t^{-\mu} \left(\f{t \sigma'}{\sigma}\right)^{-\f{1}{2}} \sum_{i,j = 1}^{n+1} \left( a_{ij}(x,t) v_j\right)_i \dd_t v\\
			&   + \f{1}{2} \int x_{n+1}^a t^{-\mu-1} \left(\f{t \sigma'}{\sigma}\right)^{-\f{1}{2}} \langle X, A(x,t) \nabla  v \rangle\  \tn{div}(A(x,t)\nabla v)\notag \\
			& = - \int x_{n+1}^a t^{-\mu} \left(\f{t \sigma'}{\sigma}\right)^{-\f{1}{2}} \sum_{i,j = 1}^{n+1} a_{ij} v_j \dd_t v_i - a \int x_{n+1}^{a-1} t^{-\mu} \left(\f{t \sigma'}{\sigma}\right)^{-\f{1}{2}} v_{n+1}\dd_t v \notag\\
			&    + \f{1}{2} \int x_{n+1}^a t^{-\mu-1} \left(\f{t \sigma'}{\sigma}\right)^{-\f{1}{2}} \sum_{i,j,p,q = 1}^{n+1} X_i a_{ij} v_j \left( a_{pq} v_q \right)_p \notag\\
			& = - \f{1}{2} \int x_{n+1}^a t^{-\mu} \left(\f{t \sigma'}{\sigma}\right)^{-\f{1}{2}} \dd_t \langle \nabla v, A\nabla v\rangle + \f{1}{2} \int x_{n+1}^a t^{-\mu} \left(\f{t \sigma'}{\sigma}\right)^{-\f{1}{2}} \sum_{i,j = 1}^{n+1} v_j  v_i \dd_ta_{ij}\notag \\
			&  - \int x_{n+1}^{a} t^{-\mu} \left(\f{t \sigma'}{\sigma}\right)^{-\f{1}{2}} \f{\mc{Z}v} {2t} \f{a \dd_{n+1}v}{x_{n+1}} - \f{1}{2} \int x_{n+1}^a t^{-\mu-1} \left(\f{t \sigma'}{\sigma}\right)^{-\f{1}{2}} \sum_{i,j,p,q} (X_i a_{ij} v_j)_p a_{pq} v_q \notag\\
			& = - \f{1}{4} \int x_{n+1}^a t^{-\mu} \left(\f{t \sigma'}{\sigma}\right)^{-\f{3}{2}} \left( \f{t \sigma'}{\sigma} \right)' \langle \nabla v, A\nabla v\rangle - \f{\mu}{2} \int x_{n+1}^a t^{-\mu-1} \left(\f{t \sigma'}{\sigma}\right)^{-\f{1}{2}} \langle \nabla v, A\nabla v\rangle\notag  \\
			&   + \f{1}{2} c^{-\mu} \left(\f{c \sigma'(c)}{\sigma(c)}\right)^{-\f{1}{2}} \int_{t=c} x_{n+1}^{a} \langle \nabla v, A \nabla v \rangle (X,c) \ \d X  + O(1) \int x_{n+1}^a t^{-\mu} \left|\nabla v\right|^2 - \mc{I}_4 + \mc{K}, \notag
		\end{align}
		where  
		\begin{align}
			\nn  \mc{K}  = - \f{1}{2} \int x_{n+1}^a t^{-\mu-1} \left(\f{t \sigma'}{\sigma}\right)^{-\f{1}{2}} \sum_{i,j,p,q} (X_i a_{ij} v_j)_p \  a_{pq} v_q. \end{align}	
		Using \eqref{1} and \eqref{2} we then obtain
		\begin{align}
			&   \mc{K}  = - \f{1}{2} \int x_{n+1}^a t^{-\mu-1} \left(\f{t \sigma'}{\sigma}\right)^{-\f{1}{2}} \sum_{i,j,p,q} (X_i a_{ij} v_j)_p \  a_{pq} v_q\\
			& = - \f{1}{2} \int x_{n+1}^a t^{-\mu-1} \left(\f{t \sigma'}{\sigma}\right)^{-\f{1}{2}} \sum_{i,j,p,q} \left( X_i a_{ij} v_{jp} + \delta_{ip} a_{ij} v_j + X_i a_{ij,p} v_j \right) \  a_{pq} v_q \notag\\
			& = - \f{1}{2} \int x_{n+1}^a t^{-\mu-1} \left(\f{t \sigma'}{\sigma}\right)^{-\f{1}{2}} \left( \left| A \nabla v \right|^2 + \sum_{i,p,q} X_i v_{ip} a_{pq} v_q + \sum_{i,j,p,q} X_i b_{ij} v_{jp} a_{pq} v_q  + \sum_{i,j,p,q} X_i a_{ij,p} v_j a_{pq} v_q \right) \notag\\
			& = - \f{1}{2} \int x_{n+1}^a t^{-\mu-1} \left(\f{t \sigma'}{\sigma}\right)^{-\f{1}{2}} \bigg( \left| A \nabla v \right|^2 + \f{1}{2} X \cdot \nabla \langle A \nabla v, \nabla v \rangle 
			+ \f{1}{2} \langle X, B(X,t) \nabla \langle A \nabla v, \nabla v \rangle \rangle \notag \\ & + O(|X|) \langle A \nabla v, \nabla v \rangle   \bigg) .\notag  \end{align}
		
		Now by integrating by parts the integral $  - \f{1}{4} \int x_{n+1}^a t^{-\mu-1}  X \cdot \nabla \langle A \nabla v, \nabla v \rangle$, we obtain from above that the following holds,
		\begin{align}\label{k1}
			& \mc{K} = \f{n-1+a}{4} \int x_{n+1}^a t^{-\mu-1} \left(\f{t \sigma'}{\sigma}\right)^{-\f{1}{2}} \langle A \nabla v, \nabla v \rangle + O(1)  \int x_{n+1}^a t^{-\mu-1} ( |X| + t ) \left|\nabla v\right|^2.
		\end{align}
		
		where we also used that $ a_{ij}(X,t) = \delta_{ij} + b_{ij}(X,t) $. Since $\mu=\frac{n-1+a}{2}$, from \eqref{k0} and \eqref{k1} we obtain
		\begin{align}\label{24}
			& \mc{I}_2 + \mc{I}_4 = - \f{1}{4} \int x_{n+1}^a t^{-\mu} \left(\f{t \sigma'}{\sigma}\right)^{-\f{3}{2}} \left( \f{t \sigma'}{\sigma} \right)' \langle \nabla v, A \nabla v \rangle   + O(1) \int x_{n+1}^a t^{-\mu-1} \left( |X| + t \right) \left|\nabla v\right|^2  \\  
			&  + \f{1}{2} c^{-\mu} \left(\f{c \sigma'(c)}{\sigma(c)}\right)^{-\f{1}{2}} \int_{t=c} x_{n+1}^{a} \langle \nabla v, A \nabla v \rangle (X,c) \ \d X.\notag
		\end{align}
		To express the above relation in terms of $ u $, we first recall 
		\begin{equation}\label{uv} \nabla v = \sigma^{-\alpha}(t) e^{-\f{|X|^2}{8t}} \left( \nabla w - \f{X}{4t} w \right). \end{equation}
		We now consider the term  $- \f{1}{4} \int x_{n+1}^a t^{-\mu} \left(\f{t \sigma'}{\sigma}\right)^{-\f{3}{2}} \left( \f{t \sigma'}{\sigma} \right)' \langle \nabla v, A \nabla v \rangle$. 	Using \eqref{uv}  and also \eqref{1} and \eqref{2} we have
		\begin{align}\label{i1}
			& - \f{1}{4} \int x_{n+1}^a t^{-\mu} \left(\f{t \sigma'}{\sigma}\right)^{-\f{3}{2}} \left( \f{t \sigma'}{\sigma} \right)' \langle \nabla v, A \nabla v \rangle \\
			& = - \f{1}{4} \int x_{n+1}^a t^{-\mu} \left(\f{t \sigma'}{\sigma}\right)^{-\f{3}{2}} \left( \f{t \sigma'}{\sigma} \right)' \sigma^{-2\alpha}(t) \left\langle \nabla w - \f{X}{4t} w, A \left( \nabla w - \f{X}{4t} w \right) \right\rangle e^{-\f{|X|^2}{4t}}\notag \\
			& =  - \f{1}{4} \int x_{n+1}^a t^{-\mu} \left(\f{t \sigma'}{\sigma}\right)^{-\f{3}{2}} \left( \f{t \sigma'}{\sigma} \right)' \sigma^{-2\alpha}(t) \left( \langle \nabla w, A \nabla w \rangle + \f{\langle X, AX \rangle}{16 t^2} w^2 - \f{1}{4t} \ \langle AX \cdot \nabla (w^2) \rangle \right)  \ e^{-\f{|X|^2}{4t}}\notag  \\
			& = - \f{1}{4} \int x_{n+1}^a t^{-\mu} \left(\f{t \sigma'}{\sigma}\right)^{-\f{3}{2}} \left( \f{t \sigma'}{\sigma} \right)' \sigma^{-2\alpha}(t) \left( \langle \nabla w, A \nabla w \rangle - \f{\langle X, AX \rangle}{16t^2} w^2 \right) \ e^{-\f{|X|^2}{4t}}\notag \\
			& \hspace{0.7cm} - \f{1}{16} \int  t^{-\mu-1} \left(\f{t \sigma'}{\sigma}\right)^{-\f{3}{2}} \left(\f{t \sigma'}{\sigma} \right)' \tn{div}\left( x_{n+1}^a AX \right) w^2  \ e^{-\f{|X|^2}{4t}}\notag \\
			& = - \f{1}{4} \int x_{n+1}^a t^{-\mu} \left(\f{t \sigma'}{\sigma}\right)^{-\f{3}{2}} \left( \f{t \sigma'}{\sigma} \right)' \sigma^{-2\alpha}(t) \left( \langle \nabla w, A \nabla w \rangle - \f{\langle X, AX \rangle}{16t^2} w^2 \right) \ e^{-\f{|X|^2}{4t}} \notag\\
			&  - \f{n+1+a}{16} \int x_{n+1}^a t^{-\mu-1} \left(\f{t \sigma'}{\sigma}\right)^{-\f{3}{2}} \left(\f{t \sigma'}{\sigma} \right)' \sigma^{-2\alpha}(t) w^2 e^{-\f{|X|^2}{4t}}\notag \\ 
			\label{changed gradient} & \hspace{0.7cm}  + O(1) \int x_{n+1}^a t^{-\mu-1} \left(\f{t \sigma'}{\sigma} \right)' \sigma^{-2\alpha}(t) \left( |X| + t \right) w^2  e^{-\f{|X|^2}{4t}}.\notag
		\end{align}
		A purely negative term in \eqref{i1} above is 
		\begin{equation}\label{hi2} \mc{I}^{*}_2 = \f{1}{64} \int x_{n+1}^a t^{-\mu-2} \left(\f{t \sigma'}{\sigma}\right)^{-\f{3}{2}} \left( \f{t \sigma'}{\sigma} \right)' \sigma^{-2\alpha}(t) \langle X, AX \rangle \ w^2 \ e^{-\f{|X|^2}{4t}}.  \end{equation}
		This  will be handled eventually after combining $\mc{I}_2$ with $\mc{I}_3$ due to the presence of a similar term in $\mc{I}_3$. See \eqref{I_3} and \eqref{jusi2} below.
		The boundary integral in \eqref{k0}  above,  i.e.  the term \[ \f{1}{2} c^{-\mu} \left(\f{c \sigma'(c)}{\sigma(c)}\right)^{-\f{1}{2}} \int_{t=c} x_{n+1}^{a} \langle \nabla v, A \nabla v \rangle (X,c) \] can be treated in a similar fashion which finally results in
		\begin{align*}
			& \f{1}{2} c^{-\mu} \left(\f{c \sigma'(c)}{\sigma(c)}\right)^{-\f{1}{2}} \int_{t=c} x_{n+1}^{a} \langle \nabla v, A \nabla v \rangle (X,c) \ \d X \\
			& = \f{1}{2} c^{-\mu} \sigma^{-2\alpha}(c) \left(\f{c \sigma'(c)}{\sigma(c)}\right)^{-\f{1}{2}} \int_{t=c} x_{n+1}^{a} \left( \langle \nabla w, A \nabla w \rangle - \f{\langle X, A(X) \rangle}{16c^2} w^2 + \f{n+1+a}{4c} w^2 \right) \ e^{-\f{|X|^2}{4c}} \ \d X \\
			& \hspace{0.7cm} + O(1) \ c^{-\mu-1} \sigma^{-2\alpha}(c) \int_{t=c} x_{n+1}^{a} \left( |X| + c \right) w^2  \ e^{-\f{|X|^2}{4c}}.
		\end{align*}
		Now a purely negative term in the above expression is 
		\begin{equation}\label{hi22} 
			\mc{I}^{**}_2 = {-} \f{1}{32} c^{-\mu-2} \sigma^{-2\alpha}(c) \left(\f{c \sigma'(c)}{\sigma(c)}\right)^{-\f{1}{2}} \int_{t=c} x_{n+1}^{a} {\langle X, A(X,c)X \rangle} \ w^2(X,c) \ e^{-\f{|X|^2}{4c}} \d X
		\end{equation}
		which will be taken care of by a similar term in \eqref{I_3}. See also \eqref{jusi22} below.  Using \eqref{uv} and also by making use of the inequality $ \left( a + b \right)^2 \le 2 (a^2 + b^2) $, we obtain
		\begin{align}\label{e4}
			\int x_{n+1}^a t^{-\mu-1} |X| \left|\nabla v\right|^2 & \le 2 \int x_{n+1}^a t^{-\mu-1} \sigma^{-2\alpha}(t) |X| |\nabla w|^2 e^{-\f{|X|^2}{4t}} \\
			& \hspace{3cm} + \frac{1}{8} \int x_{n+1}^a t^{-\mu-3} \sigma^{-2\alpha}(t) |X|^3 |w|^2 e^{-\f{|X|^2}{4t}} \notag
		\end{align}
		and 
		\begin{align}\label{e5}
			\int x_{n+1}^a t^{-\mu-1} t \left|\nabla v\right|^2 & \le 2 \int x_{n+1}^a t^{-\mu} \sigma^{-2\alpha}(t) |\nabla w|^2 e^{-\f{|X|^2}{4t}} \\
			& \hspace{3cm} + \frac{1}{8} \int x_{n+1}^a t^{-\mu-2} \sigma^{-2\alpha}(t) |X|^2 |w|^2 e^{-\f{|X|^2}{4t}}.\notag
		\end{align}
		%
		%
		%
		%
		%
		%
		%
		Thus  from \eqref{24}-\eqref{e5} and also by using Lemma \ref{sigma} we deduce the following estimate
		\begin{align}\label{nm2}
			& \mc{I}_2 + \mc{I}_4 - \mc{I}^{*}_2 - \mc{I}^{**}_2  \succeq  \int x_{n+1}^{a} \sigma^{1-2\alpha}(t) \frac{\theta(\lambda t)}{t} \left| \nabla w \right|^2 G  + c \sigma^{-2\alpha}(c) \int_{t=c} x_{n+1}^{a} \left| \nabla w \right|^2 G \\ 
			& - \int x_{n+1}^a t^{-\mu-1} \sigma^{-2\alpha}(t) \left( |X| |\nabla w|^2 + \f{|X|^2}{t} w^2 + \f{|X|^3}{t^2}w^2 \right) e^{-\f{|X|^2}{4t}} - \ c^{-\mu-1} \sigma^{-2\alpha}(c) \int_{t=c} x_{n+1}^{a} |X| w^2 e^{-\f{|X|^2}{4c}}\notag \\
			&  - O(1) \int x_{n+1}^a t^{-\mu-2} \sigma^{-2\alpha}(t) \left( |X| + t \right) w^2  e^{-\f{|X|^2}{4t}}.  \notag
		\end{align} The only cross-product term from $ \int P Q $ which remains to be addressed is $\mc{I}_3$. We have
		\begin{align}\label{I_3}
			\mc{I}_3  & = \int x_{n+1}^a t^{-\mu} \left(\f{t \sigma'}{\sigma}\right)^{-\f{1}{2}} \f{\mc{Z}v} {2t} \left( \f{\langle X, A X \rangle}{16 t^2} - \f{|X|^2}{8t^2}\right) v \\
			& = \f{1}{32} \int x_{n+1}^a t^{-\mu-2} \left(\f{t \sigma'}{\sigma}\right)^{-\f{1}{2}} \left( \langle X, A X \rangle - 2|X|^2 \right)\ \dd_t(v^2)\notag \\
			&  + \f{1}{64} \int x_{n+1}^a t^{-\mu-3} \left(\f{t \sigma'}{\sigma}\right)^{-\f{1}{2}} \left( \langle X, A X \rangle - 2|X|^2 \right)\ \langle X, A \nabla (v^2) \rangle\notag \\
			& = \f{n+3+a}{64} \int x_{n+1}^a t^{-\mu-3} \left(\f{t \sigma'}{\sigma}\right)^{-\f{1}{2}} \left( \langle X, A X \rangle - 2|X|^2 \right)\ v^2\ \text{(using $\mu=\frac{n-1+a}{2}$)} \notag \\
			& + \f{1}{64} \int x_{n+1}^a t^{-\mu-2} \left(\f{t \sigma'}{\sigma}\right)^{-\f{3}{2}} \left(\f{t \sigma'}{\sigma}\right)' \left( \langle X, A(x,t) X \rangle - 2|X|^2 \right)v^2 + \f{1}{32}  \int x_{n+1}^a t^{-\mu-2} \left(\f{t \sigma'}{\sigma}\right)^{-\f{1}{2}} \langle X, A_t X \rangle v^2 \notag\\
			&  - \f{1}{64} \int t^{-\mu-3} \left(\f{t \sigma'}{\sigma}\right)^{-\f{1}{2}} \left( \langle X, A X \rangle - 2|X|^2 \right) \tn{div}\left( x_{n+1}^a A(x,t)X\right) v^2\notag  \\
			&  - \f{1}{64} \int x_{n+1}^a t^{-\mu-3} \left(\f{t \sigma'}{\sigma}\right)^{-\f{1}{2}} \left\langle X, A\nabla\left( \langle X, A X \rangle - 2|X|^2 \right) \right\rangle v^2\notag \\
			&  - \f{1}{32} c^{-\mu-2} \left( \f{c \sigma'(c)}{\sigma(c)} \right)^{-\f{1}{2}} \int_{t=c} x_{n+1}^a \left( \langle X, A X \rangle - 2|X|^2 \right) v^2 .\notag
		\end{align}	
		Now using   
		\begin{align}\label{ere1}
			&  \tn{div}\left( x_{n+1}^a A(x,t)X\right) = n+1+a+O(|X|+t)\\
			&\text{and also that} \left(\langle X, A X \rangle - 2 |X|^2\right) O(|X|+t) = O(1) \left(|X|^3 + t|X|^2\right),\notag
		\end{align}
		we find 
		\begin{align}\label{just1}
			& \f{n+3+a}{64} \int x_{n+1}^a t^{-\mu-3} \left(\f{t \sigma'}{\sigma}\right)^{-\f{1}{2}} \left( \langle X, A X \rangle - 2|X|^2 \right)\ v^2\\
			& - \f{1}{64} \int t^{-\mu-3} \left(\f{t \sigma'}{\sigma}\right)^{-\f{1}{2}} \left( \langle X, A X \rangle - 2|X|^2 \right) \tn{div}\left( x_{n+1}^a A(x,t)X\right) v^2\notag \\
			&= \f{1}{32} \int x_{n+1}^a t^{-\mu-3} \left(\f{t \sigma'}{\sigma}\right)^{-\f{1}{2}} \left( \langle X, A X \rangle - 2 |X|^2 \right)\notag\\
			& + O(1) \int x_{n+1}^a t^{-\mu-3} |X|^3 v^2+ O(1) \int x_{n+1}^a t^{-\mu-2} |X|^2 v^2. \notag
		\end{align}
		Similarly by using
		\begin{align*}
			\langle X, A(x,t) X \rangle - 2|X|^2 = - \langle X, A(x,t)X \rangle + O(1) (|X|^3+ t|X|^2)
		\end{align*}
		
		we have
		\begin{align}\label{jusi2}
			& \f{1}{64} \int x_{n+1}^a t^{-\mu-2} \left(\f{t \sigma'}{\sigma}\right)^{-\f{3}{2}} \left(\f{t \sigma'}{\sigma}\right)' \left( \langle X, A(x,t) X \rangle - 2|X|^2 \right)v^2= -\mc{I}_2^*\\
			& + O(1) \int x_{n+1}^a t^{-\mu-2} \left(\f{t \sigma'}{\sigma}\right)' |X|^3 v^2 + O(1) \int x_{n+1}^a t^{-\mu-1} \left(\f{t \sigma'}{\sigma}\right)' v^2,\notag
		\end{align}
		and
		\begin{align}\label{jusi22}
			&  - \f{1}{32} c^{-\mu-2} \left( \f{c \sigma'(c)}{\sigma(c)} \right)^{-\f{1}{2}} \int_{t=c} x_{n+1}^a \left( \langle X, A X \rangle - 2|X|^2 \right) v^2\\
			&= -\mc{I}_2^{**} + O(1) \ c^{-\mu-2} \int_{t=c} x_{n+1}^a |X|^3 v^2 + O(1) \ c^{-\mu-1} \int_{t=c} x_{n+1}^a |X|^2 v^2,   \notag
		\end{align}
		where $\mc{I}_2^*$ and $\mc{I}_2^{**}$ are as in \eqref{hi2} and \eqref{hi22} respectively.
		Thus using \eqref{just1}-\eqref{jusi22} in \eqref{I_3} combined with the fact that
		$  \langle X, A_t X \rangle = O(|X|^2)$,
		we obtain 
		\begin{align}\label{io4}
			\mc{I}_3  & = \f{1}{32} \int x_{n+1}^a t^{-\mu-3} \left(\f{t \sigma'}{\sigma}\right)^{-\f{1}{2}} \left( \langle X, A X \rangle - 2 |X|^2 - \f{1}{2} \langle X, A\nabla (\langle X, A X \rangle - 2|X|^2) \rangle \right) v^2 \\
			&+ O(1) \int x_{n+1}^a t^{-\mu-2} |X|^2 v^2  + O(1) \int x_{n+1}^a t^{-\mu-2} \left(\f{t \sigma'}{\sigma}\right)' |X|^3 v^2 + O(1) \int x_{n+1}^a t^{-\mu-1} \left(\f{t \sigma'}{\sigma}\right)' v^2  \notag \\
			& - \mc{I}^*_2 - \mc{I}^{**}_2 + O(1) \int x_{n+1}^a t^{-\mu-3} |X|^3 v^2 + O(1) \ c^{-\mu-2} \int_{t=c} x_{n+1}^a |X|^3 v^2 + O(1) \ c^{-\mu-1} \int_{t=c} x_{n+1}^a |X|^2 v^2.\notag
		\end{align}
		Now using  the largeness of $\f{\theta(\ld t)}{t}$ and also the fact that 
		\begin{align*}
			\langle X, A X \rangle - 2|X|^2 = -|X|^2 + O(1) \left(|X|^3 + t|X|^2\right), \\
			\langle X, A\nabla (\langle X, A X \rangle - 2|X|^2) \rangle = - 2|X|^2 + O(1) \left(|X|^3 + t|X|^2\right),
		\end{align*} 
		we find
		\begin{align}\label{tyur}
			&	 \f{1}{32} \int x_{n+1}^a t^{-\mu-3} \left(\f{t \sigma'}{\sigma}\right)^{-\f{1}{2}} \left( \langle X, A X \rangle - 2 |X|^2 - \f{1}{2} \langle X, A\nabla (\langle X, A X \rangle - 2|X|^2) \rangle \right) v^2\\
			&=  O(1) \int x_{n+1}^a t^{-\mu-2}\sigma^{-2\alpha} |X|^2  w^2e^{-\f{|X|^2}{4t}}  
			+ O(1) \int x_{n+1}^a t^{-\mu-3}\sigma^{-2\alpha} |X|^3 w^2 e^{-\f{|X|^2}{4t}}.\notag
		\end{align}
		
		Using \eqref{tyur} in \eqref{io4} we thus  deduce  the following estimate

		\begin{align}\label{pen}
			&   \mc{I}_3 + \mc{I}^*_2 + \mc{I}^{**}_2   =    O(1) \int x_{n+1}^a \sigma^{-2\alpha} \frac{\theta(\lambda t)}{t} w^2 G  + O(1) \int x_{n+1}^a t^{-\mu-2}\sigma^{-2\alpha} |X|^2  w^2e^{-\f{|X|^2}{4t}}  \\
			&  + O(1) \int x_{n+1}^a t^{-\mu-3}\sigma^{-2\alpha} |X|^3 w^2 e^{-\f{|X|^2}{4t}}  + O(1) \sigma^{-2\alpha}(c) \ c^{-\mu-2} \int_{t=c} x_{n+1}^a |X|^3 w^2 e^{-\f{|X|^2}{4c}}      \notag \\
			&  + O(1) \sigma^{-2\alpha}(c) \int_{t=c} x_{n+1}^{a} w^2  G.\notag
		\end{align}
		We now estimate all the error terms in \eqref{nm1}, \eqref{nm2} and \eqref{pen} above using the log inequality in Lemma \ref{rl}.

		Taking $\ep = (\ld t)^{2\alpha+\mu+\f{3}{2}}$ and $m = \f{1}{2}$ in \eqref{ein}, we observe
		\begin{align*}
			t^{-\mu -2} |X| e^{-\f{|X|^2}{4t}} = 2t^{-\mu-\f{3}{2}} \left( \f{|X|^2}{4t} \right)^{1/2} e^{-\f{|X|^2}{4t}} & \le C t^{-\mu-\f{3}{2}} \left( (\ld t)^{2\alpha+\mu+\f{3}{2}} + \left(2\alpha+\mu+\f{3}{2}\right)^\f{1}{2} \left(\log \f{1}{\ld t} \right)^\f{1}{2} e^{-\f{|X|^2}{4t}} \right) \\
			& \le C \left( \ld^{2\alpha + N} t^{2\alpha} + \rt \alpha t^{-\mu-\f{3}{2}} \left(\log \f{1}{\ld t} \right)^\f{1}{2} e^{-\f{|X|^2}{4t}}            \right) \\
			& \le C \left( \ld^{2\alpha + N} t^{2\alpha} + \delta \left( \ld t \log \f{1}{\ld t} \right)^\f{1}{2} t^{-\mu-2} e^{-\f{|X|^2}{4t}}  \right)\\
			&\le C \left( \ld^{2\alpha + N} t^{2\alpha} + \delta \ \f{\theta(\ld t)}{t} t^{-\mu-1} e^{-\f{|X|^2}{4t}} \right).	\end{align*} 
		Since $\sigma \sim t$, it follows from the above  inequality that the following estimate holds
		\begin{align}\label{error 1}
			\alpha   \int x_{n+1}^a t^{-\mu-2} \sigma^{-2\alpha} |X| e^{-\f{|X|^2}{4t}} w^2 \le C \left( \ld^{2\alpha+ N} \int x_{n+1}^a  u^2 + \delta \alpha \int x_{n+1}^a \f{\theta(\ld t)}{t} \sigma^{-2\alpha}  w^2 G\right).
		\end{align}
		Again by applying \eqref{ein} with $m = \f{3}{2}$ and $\ep = \left(\ld t\right)^{2\alpha + \mu + \f{3}{2}}$  we obtain
		\begin{align}\label{ty}
			&  t^{-\mu -3} |X|^3 e^{-\f{|X|^2}{4t}} = 8 t^{-\mu - \f{3}{2}} \left( \f{|X|^2}{4t} \right)^{\f{3}{2}} e^{-\f{|X|^2}{4t}} \le C t^{-\mu - \f{3}{2}} \left( (\ld t)^{2\alpha + \mu + \f{3}{2}} + \left(\alpha+\mu+\f{3}{2}\right)^\f{3}{2} \left(\log \f{1}{\ld t} \right)^\f{3}{2} e^{-\f{|X|^2}{4t}} \right) \\
			& \le C \left( \ld^{2\alpha + N} t^{2\alpha} + \delta \alpha \ \f{\theta(\ld t)}{t} t^{-\mu-1} e^{-\f{|X|^2}{4t}} \right) \notag
		\end{align}
		and thus similarly as for \eqref{error 1}, using \eqref{ty} we deduce the following inequality
		\begin{align}\label{error 2}
			\int x_{n+1}^a t^{-\mu-3} \sigma^{-2\alpha} |X|^3  e^{-\f{|X|^2}{4t}} w^2 \le C \left( \ld^{2\alpha+ N} \int x_{n+1}^a  w^2 + \delta \alpha \int x_{n+1}^a \f{\theta(\ld t)}{t} \sigma^{-2\alpha}  w^2 G\right).
		\end{align}
		Also in an essentially similar way, we get
		\begin{align}\label{error 3}
			\int x_{n+1}^a t^{-\mu-1} \sigma^{-2\alpha}(t) e^{-\f{|X|^2}{4t}} |X| \left|\nabla w \right|^2 \le C \left( \ld^{2\alpha+N} \int x_{n+1}^a t |\nabla w|^2 + \delta \int x_{n+1}^a \f{\theta(\ld t)}{t} \sigma^{1-2\alpha}  |\nabla w|^2  G\right).
		\end{align}
		
		Now we note that the other error terms such as   $O(1) \int x_{n+1}^a \sigma^{-2\alpha} \frac{\theta(\lambda t)}{t} w^2 G$ that shows up in \eqref{nm2} and \eqref{pen},   $\delta \alpha \int x_{n+1}^a \f{\theta(\ld t)}{t} \sigma^{-2\alpha}  w^2 G$ that shows up in   \eqref{error 1} and \eqref{error 2}  can be absorbed in the integral $ \alpha \int x_{n+1}^{a} \sigma^{-2\alpha}\f{\theta(\ld t)} {t} w^2G$ in \eqref{nm1} provided $\delta$ is sufficiently small. 
		
		\medskip

		Likewise for small enough $\delta$,  the term $\delta \int x_{n+1}^a \f{\theta(\ld t)}{t} \sigma^{1-2\alpha}  |\nabla w|^2  G$  in \eqref{error 3} can be absorbed in the integral $\int x_{n+1}^a \f{\theta(\ld t)}{t} \sigma^{1-2\alpha}  |\nabla w|^2  G$ which appears in \eqref{nm2}. 	
		
		We finally control the error term  $\sigma^{-2 \alpha}(c) \int_{t=c} x_{n+1}^a \f{|X|^3}{c} \ |w|^2 \ G$ that appears in \eqref{pen} in the following way.
		We have
		\begin{align}\label{ha1}
			& \sigma^{-2 \alpha}(c) \int_{t=c} x_{n+1}^a \f{|X|^3}{c} \ |w|^2 \ G \\
			& = \sigma^{-2 \alpha}(c) \int_{\mb{B}_\rho, \ t=c} x_{n+1}^a \f{|X|^3}{c} \ |w|^2 \ G + \sigma^{-2 \alpha}(c) \int_{\mb{B}^c_\rho, \ t=c} x_{n+1}^a \f{|X|^3}{c} \ |w|^2 \ G\notag \\
			& \le \rho \sigma^{-2 \alpha}(c) \int_{t=c} x_{n+1}^a \f{|X|^2}{c} \ |w|^2 \ G + N^\alpha \ld^{2\alpha+N} \int_{t=c} x_{n+1}^a w^2(X,c),\notag
		\end{align} 
		where we used the estimate $ \f{|X|^3}{c} G(X,c) \sigma(c)^{-2\alpha} \le N^\alpha \ld^{2\alpha+N} $ for $ x \in \mb{B}^c_\rho$ in the last line in the above inequality. Finally the term  $\rho \sigma^{-2 \alpha}(c) \int_{t=c} x_{n+1}^a \f{|X|^2}{c} \ |w|^2 \ G$ is estimated by using the Hardy inequality in Lemma \ref{hardy} as  follows
		
		\begin{equation}\label{ha2}
			\rho \sigma^{-2 \alpha}(c) \int_{t=c} x_{n+1}^a \f{|X|^2}{c} \ |w|^2 \ G	\leq C \rho \sigma^{-2\alpha}(c)  \left( \int x_{n+1}^a 	 w^2 G 	 + c \int x_{n+1}^a |\nabla w|^2 G\right).
		\end{equation}
		Now the term $C \rho \sigma^{-2\alpha}(c) c \int x_{n+1}^a |\nabla w|^2 G$ can be absorbed in the integral $c \sigma^{-2\alpha}(c) \int_{t=c} x_{n+1}^{a} \left| \nabla w \right|^2 G$  in \eqref{nm2} provided $\rho$ is small enough. 	Thus from \eqref{ex1}, \eqref{nm1}, \eqref{nm2}, \eqref{pen} and \eqref{error 1}-\eqref{ha2}, we finally deduce the following estimate
		
		\begin{align}\label{ce}
			& \alpha \int_{\mb{R}^{n+1}_+ \times [c, \iy)} x_{n+1}^{a} \frac{\theta(\lambda t)}{t} \sigma^{-2 \alpha}(t) \ w^2 \ G  +  \int_{\mb{R}^{n+1}_+ \times [c, \iy)} x_{n+1}^{a} \sigma^{1-2 \alpha}(t)\ \frac{\theta(\lambda t)}{t}  |\nabla w|^2 \ G  \\
			& \preceq  \int_{\mb{R}^{n+1}_+ \times [c, \iy)}  \int \sigma^{-2 \alpha}(t) t^{-\mu} x_{n+1}^{-a} e^{-\f{|X|^2}{4t}} \left(\f{t \sigma'}{\sigma}\right)^{-\f{1}{2}}  \lvert \widetilde{\mc{H}} w \rvert ^2  + \alpha^{c' \alpha} \ \tn{sup}_{t \ge c} \int_{\mb{R}^{n+1}_+} x_{n+1}^{a} \left( w^2 + t |\nabla w|^2 \right) \ \d X \notag\\
			& + \sigma^{-2 \alpha}(c) \left\{ - c \int_{t=c} x_{n+1}^a \ |\nabla w(X,c)|^2 \ G(X,c) \ \d X + \alpha \int_{t=c} x_{n+1}^a \ |w(X,c)|^2 \ G(X,c) \ \d X\right\}.\notag
		\end{align}	
		The desired estimate as claimed in \eqref{car1} follows  from \eqref{ce} above  by using \eqref{large}, the fact that $\lambda \sim \alpha$ and also that
		\begin{align*}
			&  \int_{\mb{R}^{n+1}_+ \times [c, \iy)}  \int \sigma^{-2 \alpha}(t) t^{-\mu} x_{n+1}^{-a} e^{-\f{|X|^2}{4t}} \left(\f{t \sigma'}{\sigma}\right)^{-\f{1}{2}}  \lvert \widetilde{\mc{H}} w \rvert ^2 
			\\
			&\approx \int_{\mb{R}^{n+1}_+ \times [c, \iy)} \sigma^{ 1-2 \alpha}(t) x_{n+1}^{-a} \ \lvert \widetilde{\mc{H}} w \rvert ^2 \ G.	  \end{align*}  
		
	\end{proof}
	
	Now by a translation in time,  we find from Lemma  \ref{carl1} that the following estimate holds.
	
	\begin{lemma}\label{carl}
		Let $ \widetilde{\mc{H}}$ be as in \eqref{extop} where  $A(x,t)$  satisfies \eqref{matrix}. Let $ w \in C_0^\iy \left( \overline{\mb{B}^+_4} \times [0,\left. \f{1}{3\ld}\right) \right)$  be such that $\py w\equiv 0$ on $\{x_{n+1}=0\}$ where $ \ld = \frac{\alpha}{ \delta^2}$ for  $ \delta \in (0,1) $   sufficiently small. Then the following estimate holds  for all large $\alpha$	  %
		\begin{align}\label{car10}
			& \alpha^2 \int_{\mb{R}^{n+1}_+ \times [0, \iy)} x_{n+1}^{a} \sigma_c^{-2 \alpha}(t) \ w^2 \ G_c  + \alpha \int_{\mb{R}^{n+1}_+ \times [0, \iy)} x_{n+1}^{a} \sigma_c^{1-2 \alpha}(t)\  |\nabla w|^2 \ G_c  \\
			& \preceq  \int_{\mb{R}^{n+1}_+ \times [0, \iy)} \sigma_c^{ 1-2 \alpha}(t) x_{n+1}^{-a} \ \lvert \widetilde{\mc{H}} w \rvert ^2 \ G_c  + \alpha^{c' \alpha} \ \tn{sup}_{t \ge 0} \int_{\mb{R}^{n+1}_+} x_{n+1}^{a} \left( w^2 + t |\nabla w|^2 \right) \ \d X \notag\\
			& + \sigma^{-2 \alpha}(c) \left\{ - c \int_{t=0} x_{n+1}^a \ |\nabla w(X,0)|^2 \ G(X,c) \ \d X + \alpha \int_{t=0} x_{n+1}^a \ |w(X,0)|^2 \ G(X,c) \ \d X\right\}.\notag
		\end{align}
		Here $\sigma_c(t) = \sigma (c+t)$, $G_c(X,t)= G(X, t+c)$  and $0 < c \le \f{1}{5\ld}. $	
	\end{lemma}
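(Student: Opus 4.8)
The plan is to deduce \eqref{car10} from the Carleman estimate \eqref{car1} of Lemma~\ref{carl1} by a translation in the time variable. Given $w \in C_0^\iy(\overline{\mathbb{B}_4^+} \times [0, \tfrac{1}{3\lambda}))$ with $\py w \equiv 0$ on $\{x_{n+1} = 0\}$, I would introduce the time-translate $\widetilde w(X, s) := w(X, s - c)$, which is smooth, compactly supported in $\overline{\mathbb{B}_4^+} \times [c, c + \tfrac{1}{3\lambda})$, and still satisfies $\py \widetilde w \equiv 0$ on $\{x_{n+1} = 0\}$. Correspondingly, I would consider the time-translated degenerate operator $\widetilde{\mathcal{H}}^{(c)} := x_{n+1}^a \partial_s + \operatorname{div}\big(x_{n+1}^a A(x, s - c)\nabla\big)$, for which the pointwise identity $(\widetilde{\mathcal{H}}^{(c)} \widetilde w)(X, s) = (\widetilde{\mathcal{H}} w)(X, s - c)$ holds for all $s \ge c$.

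The next step is to apply Lemma~\ref{carl1} to $\widetilde w$ and $\widetilde{\mathcal{H}}^{(c)}$, with the solid integrals taken over $\mathbb{R}^{n+1}_+ \times [c, \iy)$ and the boundary terms on the slice $\{t = c\}$. Two minor points must be checked. First, $\widetilde w$ is supported in the slab $[c, c + \tfrac{1}{3\lambda})$ rather than $[0, \tfrac{1}{3\lambda})$; but since $\lambda c \le \tfrac15$ one has $\lambda s \le \tfrac{8}{15} < 1$ throughout the support of $\widetilde w$, so the properties of $\sigma$ in Lemma~\ref{sigma} and the lower bound \eqref{large} (up to at worst a change of the implicit universal constant) remain in force, and the proof of Lemma~\ref{carl1} applies verbatim. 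Second, the matrix $A(x, s - c)$ still obeys the block structure in \eqref{matrix} and the Lipschitz bound \eqref{lip1}, and although its value at the origin $A(0, -c)$ need not equal $\mathbb{I}_{n+1}$, it differs from it by at most $K c = O(1/\lambda)$; on the region where the Gaussian weight is non-negligible, i.e. $|X|^2 \lesssim s \lesssim 1/\lambda$, this deviation is of the same order as the $O(|X| + t)$ errors already present in \eqref{1}--\eqref{2}, hence it is absorbed in exactly the same manner. (Equivalently, one may simply re-run the proof of Lemma~\ref{carl1} with $\sigma_c$, $G_c$ replacing $\sigma$, $G$ throughout.)

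It then remains to perform the substitution $s = t + c$ in every integral of the resulting inequality. Under it $\sigma(s) = \sigma_c(t)$, $G(X, s) = G_c(X, t)$ and $\int_{[c,\iy)}\,ds = \int_{[0,\iy)}\,dt$; the terms $|\widetilde w|^2$ and $|\nabla \widetilde w|^2$ become $|w|^2$ and $|\nabla w|^2$; the bulk right-hand side term $|\widetilde{\mathcal{H}}^{(c)} \widetilde w|^2$ becomes $|\widetilde{\mathcal{H}} w|^2$ by the identity above; the boundary slice $\{s = c\}$ becomes $\{t = 0\}$ with $\widetilde w(X, c) = w(X, 0)$ and $\nabla \widetilde w(X, c) = \nabla w(X, 0)$, so that the boundary contribution $\sigma^{-2\alpha}(c)\big\{- c\int_{t=c}\cdots + \alpha \int_{t=c}\cdots\big\}$ becomes precisely the boundary term displayed in \eqref{car10}; finally the supremum term turns into $\alpha^{c'\alpha} \sup_{t \ge 0}\int_{\mathbb{R}^{n+1}_+} x_{n+1}^a \big(w^2 + (t+c)|\nabla w|^2\big)\,dX$, where the extra $c|\nabla w|^2$ is harmlessly absorbed into the large factor $\alpha^{c'\alpha}$. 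This gives \eqref{car10}. The only genuinely non-routine point is the second one above --- verifying that the mild enlargement of the time support of $\widetilde w$ and the $O(1/\lambda)$ departure of the translated coefficients from the normalization \eqref{matrix} leave the proof of Lemma~\ref{carl1} intact; granting that, everything else is a bookkeeping change of variables.
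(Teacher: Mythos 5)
Your proposal is correct and follows essentially the same route as the paper, whose entire proof of Lemma \ref{carl} is the one-line remark that \eqref{car10} follows from Theorem \ref{carl1} ``by a translation in time'': you have simply carried out that translation explicitly, including the two checks the paper leaves implicit (the translated time support stays in the regime $\ld s \le 8/15 < 1$ where Lemma \ref{sigma} and \eqref{large} remain valid, and the translated coefficients still obey the structural bounds of \eqref{matrix}--\eqref{lip1}, e.g. because $|s-c|\le s$ on the support, so the $O(|X|+t)$ expansions persist). The only loose phrase is the claim that the extra $c|\nabla w|^2$ in the supremum term is ``absorbed into $\alpha^{c'\alpha}$'' --- strictly one obtains the sup of $\int x_{n+1}^a\big(w^2+(t+c)|\nabla w|^2\big)$ --- but since $c\le \tfrac{1}{5\ld}\le 1$ this distinction is immaterial for the way the lemma is used, and the paper glosses over exactly the same point.
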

	
	\subsection{Some basic regularity estimates for the extension problem} We now gather some important qualitative properties of the solution to the  extension problem \eqref{exten prob}. We first note that it follows from Theorem \ref{energy} that the following result holds. 
	
	\begin{lemma}\label{ext1}
		Let $U$ be the solution to the extension problem \eqref{exten prob} corresponding to $u \in \mb{H}^s$. Assume that $\mc{H}^s u=0$ in $B_1 \times (-1,0)$ in the sense of Defintion \ref{wk}.  Then $U$ is a weak solution to
		\begin{equation}\label{wk1}
			\begin{cases}
				\operatorname{div}(x_{n+1}^a A(x) \nabla U)= x_{n+1}^a U_t\ \text{in $\mb{B}_1^+ \times (-1,0)$},
				\\
				\py U=0\ \text{at $\{x_{n+1}=0\}$}.
			\end{cases}
		\end{equation}
	\end{lemma}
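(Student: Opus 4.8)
The statement is essentially a translation of Theorem \ref{energy} into the language of weak solutions to the extension problem, combined with the hypothesis $\mc{H}^s u = 0$ in $B_1\times(-1,0)$. The plan is to take $U$ to be the solution of \eqref{exten prob} constructed in Theorem \ref{energy}, so that the bulk PDE $\operatorname{div}(x_{n+1}^a A(x)\nabla U) = x_{n+1}^a U_t$ holds in all of $\mb{R}^{n+1}\times\mb{R}_+$ (in particular in $\mb{B}_1^+\times(-1,0)$), and then to identify the weighted Neumann trace $\py U = \lim_{x_{n+1}\to 0^+} x_{n+1}^a\partial_{n+1} U$. By part (ii) of Theorem \ref{energy}, this limit exists in $\mb{H}^{-s}(\mb{R}^{n+1})$ and equals (up to the positive normalizing constant $\frac{2^{-a}\Gamma(s)}{\Gamma(1-s)}$) the quantity $\mc{H}^s u$. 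Since $\mc{H}^s u = 0$ in $B_1\times(-1,0)$ by assumption, the Neumann trace vanishes on that set, which is exactly the second line of \eqref{wk1}. The energy bound (iii) of Theorem \ref{energy} guarantees that $U \in \mc{L}^{1,2}(\mb{B}_1^+\times(-1,0); x_{n+1}^a\,dX\,dt)$, so it makes sense as a weak solution.

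First I would fix the notion of weak solution for \eqref{wk1}: $U\in\mc{L}^{1,2}(\mb{B}_1^+\times(-1,0); x_{n+1}^a)$ is a weak solution if
\[
\int_{\mb{B}_1^+\times(-1,0)} x_{n+1}^a\big(\langle A(x)\nabla U,\nabla\varphi\rangle + U_t\,\varphi\big)\,dX\,dt = 0
\]
for all $\varphi\in C_0^\infty(\mb{B}_1^+\times(-1,0))$ and, more generally, for test functions not vanishing at $\{x_{n+1}=0\}$ the boundary term $\int \py U\,\varphi$ appears and must vanish by the Neumann condition. Then I would verify that the $U$ from Theorem \ref{energy} satisfies this: the interior equation is automatic from \eqref{exten prob} (tested against $\varphi$ supported away from $\{x_{n+1}=0\}$, then a density/approximation argument using the energy estimate extends it). For the Neumann part, I would integrate by parts in $x_{n+1}$ against a test function $\varphi$ that does not vanish on $\{x_{n+1}=0\}$ but is supported in $B_1\times(-1,0)$ near the slice, pick up the boundary integral $-\lim_{x_{n+1}\to 0^+}\int x_{n+1}^a \partial_{n+1}U\,\varphi\,dx\,dt$, and invoke part (ii) of Theorem \ref{energy} together with the localization remark $\mc{H}^s u(x,t) = \mc{H}^s(\chi_{(-\infty,T]}u)(x,t)$ for $t\le T$ to conclude this boundary integral is zero because $\mc{H}^s u = 0$ on $B_1\times(-1,0)$.

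The main technical obstacle is reconciling the convergence in (ii), which is stated globally as a limit in the dual space $\mb{H}^{-s}(\mb{R}^{n+1})$, with the \emph{local} statement that $\py U = 0$ pointwise/distributionally on $B_1\times(-1,0)$; this requires that testing the $\mb{H}^{-s}$-limit against functions $\varphi\in C_0^\infty(B_1\times(-1,0))$ (viewed inside $\mb{H}^s(\mb{R}^{n+1})$) recovers exactly the distributional pairing $\langle \py U,\varphi\rangle$, and that $\langle\mc{H}^s u,\varphi\rangle = 0$ for such $\varphi$ given the hypothesis in the sense of Definition \ref{wk}. This is a standard but slightly delicate cutoff/density argument: one uses the energy estimate (iii) to justify that the interior weak formulation passes to the limit $x_{n+1}\to 0^+$, so that the trace is well-defined as an element of the dual of $\widetilde{\mc{H}}^s(B_1\times(-1,0))$, and then matches it with $\mc{H}^s u$ via (ii). Once these identifications are in place, the lemma follows immediately; I would therefore keep the proof short, citing Theorem \ref{energy} for all three ingredients and only spelling out the integration-by-parts in $x_{n+1}$ that produces the Neumann boundary term.
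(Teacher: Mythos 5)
Your proposal is correct and follows essentially the same route as the paper: integrate by parts in $x_{n+1}$ up to the level $\{x_{n+1}=\ep\}$, identify the limiting boundary term with (a constant multiple of) $\mc{H}^s u$ via Theorem \ref{energy}(ii) so that it vanishes on $B_1\times(-1,0)$ by hypothesis, and then extend to the full class of test functions by a density argument using the energy bound (iii). The only differences are cosmetic: the paper phrases the weak formulation through the even reflection $\tilde U$ across $\{x_{n+1}=0\}$, and it spells out the step you call ``slightly delicate'' by writing $\varphi(x,\ep,t)=\varphi(x,0,t)+\ep\,\psi(x,\ep,t)$ and absorbing the second piece with the uniform $\mb{H}^{-s}$ bound on $x_{n+1}^a\partial_{n+1}U$ coming from \eqref{cc}.
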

	
	We refer to Section 4 in \cite{BG} for the precise notion of weak solutions. See also \cite{BGMN}.	
	\begin{proof}
		\emph{Step 1:} Let $\tilde U$ denote the even reflection of $U$ across $\{x_{n+1}=0\}$. See \eqref{refl} below. We claim that
		\begin{equation}\label{ref1}
			\int_{\mb{B}_1 \times (-1, 0)} |x_{n+1}|^a \langle \nabla \tilde U , \nabla \phi \rangle dXdt \int_{\mb{B}_1 \times (-1, 0)} |x_{n+1}|^a \tilde U \phi_t dX dt,
		\end{equation}
		for all $\phi \in C^\infty_0(\mb{B}_1^+ \times (-1, 0))$. 
		We first let 
		\[
		I_\ep= \int_{\mb{B}_1 \times (-1, 0) \cap \{|x_{n+1}| > \ep\}} |x_{n+1}|^a \langle \nabla \tilde U , \nabla \phi \rangle dXdt.
		\]
		Then by using the equation satisfied by $\tilde U$ in $\{|x_{n+1}| > \ep \}$ and divergence theorem, we find 
		\begin{equation}\label{eqk}
			I_\ep= \int_{\mb{B}_1 \times (-1, 0) \cap \{|x_{n+1}| > \ep\}} |x_{n+1}|^a \tilde U \phi_t dX dt  + A_\ep +B_\ep,
		\end{equation} 	
		where 
		\begin{equation}
			\begin{cases}
				A_\ep= - \int_{x_{n+1} =\ep}  x_{n+1}^a \partial_{x_{n+1}} \tilde U(x, \ep, t) \phi(x, \ep, t)dXdt\\  B_\ep=  \int_{x_{n+1} =-\ep}  |x_{n+1}|^a \partial_{x_{n+1}} \tilde U(x, -\ep, t)\phi(x, -\ep, t)dXdt.\end{cases}	\end{equation}
		Using (ii) in Theorem \ref{energy}, we now show that $A_\ep, B_\ep \to 0$ as $\ep \to 0$. We only show it for $A_\ep$ as the arguments for $B_\ep$ is analogous.
		Now $A_\ep$ can be rewritten as
		\[
		A_\ep = - \int_{x_{n+1} =\ep}  x_{n+1}^a \partial_{x_{n+1}} \tilde U(x, \ep, t) \phi(x, 0, t)dXdt	- 	\int_{x_{n+1} =\ep}  x_{n+1}^a \partial_{x_{n+1}} \tilde U(x, \ep, t) (\phi(x, \ep, t)- \phi(x, 0, t))dXdt.	\]
		Using ii) in Theorem \ref{energy}, the fact that $\mc{H}^s u=0$ in $B_1 \times (-1, 0)$ and also that $\phi(\cdot, 0,\cdot)$ is smooth and compactly supported, it follows that as $\ep \to 0$
		\[
		\int_{x_{n+1} =\ep}  x_{n+1}^a \partial_{x_{n+1}} \tilde U(x, \ep, t) \phi(x, 0, t)dXdt \to 0.\]
		Now by using Fundamental theorem of calculus in $x_{n+1}$, we can write
		\[
		(\phi(x, \ep, t)- \phi(x, 0, t)= \ep \psi(x, \ep, t).
		\]
		where $\psi$ is smooth and compactly supported. Thus  using inequality \eqref{cc}, Plancherel theorem and Cauchy-Schwartz inequality, we find that the term 	$\int_{x_{n+1} =\ep}  x_{n+1}^a \partial_{x_{n+1}} \tilde U(x, \ep, t) (\phi(x, \ep, t)- \phi(x, 0, t))dXdt$  can be estimated in the following way
		\begin{align}
			&	\left|\int_{x_{n+1} =\ep}  x_{n+1}^a \partial_{x_{n+1}} \tilde U(x, \ep, t) (\phi(x, \ep, t)- \phi(x, 0, t))dXdt   \right|
			\\
			& \leq  C\ep ||u||_{\mb{H}^s(\mb R^{n+1})} \times \text{sup}_{0 < b<1}||\psi(\cdot, b, \cdot)||_{\mb{H}^s(\mb R^{n+1})} \to 0,
		\end{align}
		as $\ep \to 0$. Thus we find also by using iii) in Theorem \ref{energy} that  $A_\ep$ and likewise $B_\ep \to 0$ as $\ep \to 0$ which establishes the claim in \eqref{ref1}.
		
		\medskip
		
		\emph{ Step 2(Conclusion):} Now given that \eqref{ref1} holds, by a density argument as in \cite[Corollary 1.7]{Ki}  and also by using iii) in Theorem \ref{energy},  it is seen that \eqref{ref1}	  holds for all $\phi$ such that $\nabla \phi, \phi_t \in L^{2}(\mb{B}_1, x_{n+1}^a dX dt)$. 
		
	\end{proof}

	We now state the relevant regularity result for the extension problem which is Lemma 2.2 in \cite{BG1}. We refer to \cite[Chapter 4]{Li} for the relevant notion of parabolic H\"older spaces $H^{k+\alpha}$ that appears below.
	\begin{lemma}\label{reg}
		Let $U$ be a weak solution to \eqref{wk1} in $\mb{B}_1^+ \times (-1, 0]$ where $A$ satisfies \eqref{ellip} and \eqref{assum} or equivalently \eqref{matrix}. Then the extended function $\tilde U$  which is defined as
		\begin{equation}\label{refl}
			\begin{cases}
				\tilde U(x, x_{n+1})= U(x, x_{n+1})\ \text{for $x_{n+1}>0$}
				\\
				\tilde U(x, x_{n+1})= U(x, -x_{n+1})\ \text{for $x_{n+1}<0$}
			\end{cases}
		\end{equation}
		solves
		\begin{equation}\label{po1}
			\operatorname{div}(|x_{n+1}|^a A(x) \nabla \tilde U) - |x_{n+1}|^a \partial_t \tilde U=0
		\end{equation}
		in $\mathbb B_1 \times (-1, 0]$, and moreover $\tilde U \in H^{1+\alpha}(\mathbb B_{1/2} \times (-1/4, 0])$  for all $\alpha>0$.  Moreover, the $H^{1+\alpha}$ norm in $\mb{B}_{1/2} \times (-1/4, 0])$  can be estimated by
		$C \int_{\mathbb{B}_1 \times (-1, 0]} |x_{n+1}|^a \tilde U^2 dX dt$ where $C$ depends on the dimension, the ellipticity  and the Lipschitz character of $A$.
	\end{lemma}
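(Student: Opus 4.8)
The plan is to view \eqref{po1} as a degenerate parabolic equation in divergence form whose weight $|x_{n+1}|^a$, with $a\in(-1,1)$, is a Muckenhoupt $A_2$ weight, and then to bootstrap regularity. First I would check that the even reflection $\tilde U$ is a genuine weak solution of \eqref{po1} across $\{x_{n+1}=0\}$: this is exactly the reflection computation carried out in the proof of Lemma~\ref{ext1}, the point being that the vanishing of the weighted co-normal derivative $\py U$ on $\{x_{n+1}=0\}$ annihilates the boundary terms $A_\epsilon,B_\epsilon$ that arise from integrating by parts in $\{|x_{n+1}|>\epsilon\}$, so that in the limit $\int |x_{n+1}|^a\langle A\nabla\tilde U,\nabla\phi\rangle=\int|x_{n+1}|^a\tilde U\phi_t$ for all test functions $\phi$ on $\mb{B}_1\times(-1,0]$.

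Second, since $A$ is bounded and uniformly elliptic and $|x_{n+1}|^a$ is $A_2$, the De Giorgi--Nash--Moser theory for divergence-form degenerate parabolic operators with Muckenhoupt weights (the parabolic counterpart of the Fabes--Kenig--Serapioni theory, in the form used in \cite{BGMN}, \cite{BG}) yields that $\tilde U$ is locally bounded and parabolically H\"older continuous, with a quantitative bound $\|\tilde U\|_{C^\gamma(\mb{B}_{3/4}\times(-9/16,0])}\le C\big(\int_{\mb{B}_1\times(-1,0]}|x_{n+1}|^a\,\tilde U^2\big)^{1/2}$ for some $\gamma=\gamma(n,a,\Lambda)>0$. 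This already upgrades the interpretation of \eqref{wk1}: the weighted normal derivative $\py U$ is the boundary trace of a genuinely continuous function, so the Neumann condition holds in the classical pointwise sense, which is exactly what is needed in Section~\ref{Carleman estimate and Unique continuation}.

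Third, to reach $H^{1+\alpha}$ I would run a Campanato-type freezing/perturbation iteration. Freezing the coefficient matrix at a reference point and exploiting the normalization \eqref{matrix} (equivalently \eqref{assum}), the frozen operator at an interior point is uniformly parabolic with smooth weight, where classical parabolic Schauder theory applies directly; at a boundary point $(x_0,0)$ it becomes, after a linear change in the $x$-variables, precisely the extension operator $\partial_t-\Delta_x-\partial_{n+1}^2-\tfrac{a}{x_{n+1}}\partial_{n+1}$ associated with the fractional heat semigroup, whose solutions enjoy sharp interior and up-to-$\{x_{n+1}=0\}$ regularity (indeed smoothness in the $x$-variable) through the explicit kernel representations in \cite{CS}, \cite{ST}, \cite{NS}, \cite{BGMN}. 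Writing \eqref{po1} as $\operatorname{div}(|x_{n+1}|^a\nabla\tilde U)-|x_{n+1}|^a\partial_t\tilde U=\operatorname{div}\big(|x_{n+1}|^a(\mathbb I-A(x))\nabla\tilde U\big)$ and using the Lipschitz estimate $|A(x)-A(x_0)|\le K|x-x_0|$, the perturbation term is $O(Kr)$ on parabolic cylinders of radius $r$; iterating the decay estimates of the frozen model against this smallness over dyadic scales produces the Morrey--Campanato characterization $\tilde U,\nabla\tilde U\in C^\alpha$, i.e.\ $\tilde U\in H^{1+\alpha}(\mb{B}_{1/2}\times(-1/4,0])$, with constant depending only on $n$, $\Lambda$ and $K$. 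The stated bound by $C\int_{\mb{B}_1\times(-1,0]}|x_{n+1}|^a\tilde U^2$ then follows by absorbing the intermediate norms via the Caccioppoli inequality and the $C^\gamma$ estimate above.

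The main obstacle is precisely this last step: obtaining genuine \emph{gradient} H\"older regularity up to and across the degeneracy set $\{x_{n+1}=0\}$, where the classical Schauder machinery breaks down. What rescues the argument is the two structural features built into \eqref{matrix}---evenness of $\tilde U$ in $x_{n+1}$ together with $a_{(n+1)i}=\delta_{(n+1)i}$---which guarantee that the relevant model problem is exactly the well-understood extension problem for $(\partial_t-\Delta)^s$; the remaining perturbation analysis is then technical but standard. For a complete proof one may simply invoke \cite[Lemma~2.2]{BG1}.
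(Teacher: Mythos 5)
Your proposal is in substance the same as what the paper does: the paper offers no proof of this lemma at all, but simply quotes it as Lemma 2.2 of \cite{BG1}, which is exactly the citation you fall back on at the end. Your preliminary sketch (even reflection as in Lemma \ref{ext1}, weighted De Giorgi--Nash--Moser theory for the $A_2$ weight $|x_{n+1}|^a$, then a freezing/Campanato iteration against the model extension operator using \eqref{matrix} and the Lipschitz bound \eqref{assum}) is a reasonable outline of how the cited result is established, so there is nothing to correct here.
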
	
	
	Moreover by arguing as in the proof of Lemma 5.5 in \cite{BG}, we have the following result regarding the integrability of the second derivatives.
	
	\begin{lemma}\label{reg2}
		Let $U$ be as in Lemma \ref{reg} above. Then we have that the following estimate holds,
		\begin{equation}\label{po2}
			\int_{\mb{B}_{1/2}^+ \times (-1/4,0])} x_{n+1}^a ( |\nabla U|^2 + |\nabla_{x} \nabla U|^2  + U_t^2) + x_{n+1}^{-a} |\nabla (x_{n+1}^a U_{x_{n+1}})|^2 \leq C \int_{\mathbb{B}_1 \times (-1, 0]} |x_{n+1}|^a \tilde U^2 dX dt,\end{equation}
		where $C$ has a similar dependence as in Lemma \ref{reg} above.
	\end{lemma}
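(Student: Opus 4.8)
The plan is to bound each of the four terms on the left-hand side of \eqref{po2} by a weighted Caccioppoli-type estimate for the reflected equation \eqref{po1}, which by Lemma \ref{reg} holds weakly in the \emph{full} cylinder $\mb B_1\times(-1,0]$. Since $a\in(-1,1)$, the weight $|x_{n+1}|^a$ is a Muckenhoupt $A_2$ weight, so the weighted Sobolev and Poincar\'e inequalities are at our disposal, and the $H^{1+\al}$-regularity already furnished by Lemma \ref{reg} legitimizes the integrations by parts below (alternatively one argues with smooth approximations and passes to the limit). Throughout I would fix cutoffs $\eta\in C_0^\infty$ equal to $1$ on successively smaller cylinders lying between $\mb B_{1/2}^+\times(-1/4,0]$ and $\mb B_1\times(-1,0]$, shrinking a bounded number of times. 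The first step is to test \eqref{po1} against $\eta^2\tilde U$: the principal term gives $\int|x_{n+1}|^a\eta^2\langle A\nabla\tilde U,\nabla\tilde U\rangle\ge\Lambda^{-1}\int x_{n+1}^a\eta^2|\nabla\tilde U|^2$ by \eqref{ellip}, the cross term $\int|x_{n+1}|^a\eta\tilde U\,\langle A\nabla\tilde U,\nabla\eta\rangle$ is absorbed by Cauchy--Schwarz, and the parabolic term $\tfrac12\int|x_{n+1}|^a\eta^2\partial_t(\tilde U^2)$ is integrated by parts in $t$, the contribution at $t=0$ carrying the favorable sign. This controls $\int x_{n+1}^a|\nabla\tilde U|^2$ on a smaller cylinder by $\int_{\mb B_1\times(-1,0]}|x_{n+1}|^a\tilde U^2$.

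Next, for each $k\in\{1,\dots,n\}$ the tangential derivative $V=\partial_{x_k}\tilde U$ solves, weakly,
\[
\operatorname{div}\!\big(|x_{n+1}|^aA\nabla V\big)-|x_{n+1}|^a\partial_tV=-\operatorname{div}\!\big(|x_{n+1}|^a(\partial_{x_k}A)\nabla\tilde U\big),
\]
where $\partial_{x_k}A\in L^\infty$ by \eqref{assum}; to make this rigorous one runs the argument with tangential difference quotients $D_k^h$, $k\le n$ --- a tangential translation preserves both the domain and the evenness of $\tilde U$ across $\{x_{n+1}=0\}$ --- derives an $h$-uniform estimate and lets $h\to0$. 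Applying the first-step energy estimate to $V$, now with an inhomogeneous divergence-form right-hand side whose coefficient $(\partial_{x_k}A)\nabla\tilde U$ lies in $L^2(x_{n+1}^a\,dX\,dt)$ by the first step, and summing over $k$, one bounds $\int x_{n+1}^a|\nabla_x\nabla\tilde U|^2$ by the same quantity.

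To bound $U_t$, I would test \eqref{po1} against $\eta^2\tilde U_t$ (made rigorous via Steklov averaging in $t$). Since $A=A(x)$ is independent of $t$, the principal term equals $\tfrac12\int|x_{n+1}|^a\eta^2\partial_t\langle A\nabla\tilde U,\nabla\tilde U\rangle$; integrating by parts in $t$ (discarding the nonnegative $t=0$ term) this produces $-\tfrac12\int|x_{n+1}|^a\partial_t(\eta^2)\langle A\nabla\tilde U,\nabla\tilde U\rangle$, while the cross term is handled by Young's inequality so as to absorb a small multiple of $\int|x_{n+1}|^a\eta^2\tilde U_t^2$; together with the first step this gives $\int x_{n+1}^aU_t^2\le C\int_{\mb B_1\times(-1,0]}|x_{n+1}|^a\tilde U^2$. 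For the last term, writing $A'=(a_{ij})_{i,j\le n}$ and using that in the normalized coordinates where \eqref{matrix} holds one has $a_{(n+1)i}=\delta_{(n+1)i}$, the equation \eqref{wk1} becomes
\[
\partial_{x_{n+1}}\!\big(x_{n+1}^aU_{x_{n+1}}\big)=x_{n+1}^a\big(U_t-\operatorname{div}_x(A'\nabla_xU)\big),
\]
so that $x_{n+1}^{-a}\big|\partial_{x_{n+1}}(x_{n+1}^aU_{x_{n+1}})\big|^2=x_{n+1}^a\big|U_t-\operatorname{div}_x(A'\nabla_xU)\big|^2$, and $\operatorname{div}_x(A'\nabla_xU)=(\operatorname{div}_xA')\cdot\nabla_xU+\operatorname{tr}(A'\nabla_x^2U)$ is dominated, after weighting, by $x_{n+1}^a(|\nabla U|^2+|\nabla_x\nabla U|^2)$ using \eqref{assum} and the earlier bounds; the tangential part is simply $x_{n+1}^{-a}|\nabla_x(x_{n+1}^aU_{x_{n+1}})|^2=x_{n+1}^a|\nabla_x\partial_{x_{n+1}}U|^2$, already contained in $|\nabla_x\nabla U|^2$. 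Collecting the four bounds over $\mb B_{1/2}^+\times(-1/4,0]$ yields \eqref{po2}, with $C$ depending only on the dimension, $\Lambda$ and $K$.

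The main obstacle is carrying this Caccioppoli scheme through the degenerate–singular weight $|x_{n+1}|^a$: one may differentiate --- or take difference quotients of --- the equation only in the tangential variables $x_1,\dots,x_n$, since an $x_{n+1}$-derivative would generate a non-integrable term of size $x_{n+1}^{a-2}|\nabla_xU|^2$ near $\{x_{n+1}=0\}$. This is precisely why the fourth quantity in \eqref{po2} appears in the conormal form $x_{n+1}^{-a}|\nabla(x_{n+1}^aU_{x_{n+1}})|^2$ rather than $x_{n+1}^a|\nabla\partial_{x_{n+1}}U|^2$, and why the block normalization \eqref{matrix} is invoked in the last step --- it removes the cross-entries $a_{(n+1)j}$, $j\le n$, that would otherwise reintroduce exactly such singular contributions. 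One must also check that the Steklov-averaging and difference-quotient arguments are compatible with the weighted function spaces, which is where the $A_2$ property of $|x_{n+1}|^a$ is used.
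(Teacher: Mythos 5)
Your proposal is correct and is essentially the argument the paper invokes: the paper proves Lemma \ref{reg2} by "arguing as in the proof of Lemma 5.5 in \cite{BG}", which is precisely this weighted Caccioppoli scheme — energy estimate for the reflected equation, tangential difference quotients for $\nabla_x\nabla U$, testing with $\eta^2\tilde U_t$ for $U_t$, and then reading off the conormal term $x_{n+1}^{-a}|\nabla(x_{n+1}^aU_{x_{n+1}})|^2$ from the equation itself, exactly exploiting the block structure \eqref{matrix} to avoid differentiating in $x_{n+1}$. Your write-up fills in the details the paper leaves to the citation, with the variable Lipschitz coefficients handled correctly via $\partial_{x_k}A\in L^\infty$.
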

	
	As previously said, for notational purposes it will be convenient to work with the following backward version of problem \eqref{wk} in the cylinder  $ \mathbb B^+_4 \times (0,16]$
	\begin{equation}\label{exprob}
		\begin{cases}
			x_{n+1}^a \partial_t U + \operatorname{div}(x_{n+1}^a A(x)\nabla U)=0\ \ \ \ \ \ \ \ \ \ \ \ \text{in} \ \mb{B}_4^+ \times [0, 16),
			\\	
			U(x,0, t)= u(x,t)
			\\
			\py U(x, 0,t)= 0\ \ \ \ \ \ \ \ \ \text{in}\ B_4 \times [0,16).
		\end{cases}
	\end{equation}
	We note that the former can be transformed into the latter by changing $t \to -t$.
	
	We now introduce an assumption that will remain in force for the rest of the section up to the proof of Theorem \ref{GUCP}. When we work with a solution $U$ of the problem \eqref{wk1} in $\mb{B}_{4}^+ \times (-16, 0]$, we will always assume that
	\begin{equation}\label{ass}  
		\int_{\mathbb B_1^+} x_{n+1}^a U(X,0)^2 dX >0.
	\end{equation}
	
	As a consequence of such hypothesis the number 
	\begin{equation}\label{theta}
		\theta \overset{def}{=}\frac{\int_{\mb{B}_4^+ \times (-16, 0]} x_{n+1}^a U(X,t)^2 dXdt }{\int_{\mathbb B_1^+} x_{n+1}^aU(X,0)^2 dX}
	\end{equation}
	will be well-defined. In the remainder of this work the symbol $\theta$ will always mean the number defined by \eqref{theta}. 	
	
	We now state and prove the relevant monotonicity in time result  which is analogous to Lemma 3.1 in \cite{ABDG}.
	\begin{lemma}\label{mont} 
		Let $U$ be a solution of \eqref{exprob}. Then there exists a constant $N = N(n,a,A)>2$ such that $N\operatorname{log}(N\theta) \geq 1$, and for which the following inequality holds for $0\leq t \leq 1/{N\operatorname{log}(N\theta)}$
		\begin{align*}
			N\int_{\mathbb B_2^+} x_{n+1}^aU(X,t)^2 dX \geq \int_{\mathbb B_1^+} x_{n+1}^a U(X,0)^2 dX.
		\end{align*}	
	\end{lemma}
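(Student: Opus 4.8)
The statement is a quantitative lower bound showing that the weighted $L^2$-mass of $U$ on a time slice cannot collapse too fast as $t$ increases from $0$, as long as we stay inside a time interval of length comparable to $1/\log(N\theta)$. The natural strategy, following Lemma 3.1 in \cite{ABDG}, is to differentiate the weighted energy $H(t) \overset{def}{=} \int_{\mathbb B_2^+} x_{n+1}^a U(X,t)^2\, dX$ in $t$ and control its logarithmic derivative from below. First I would compute $H'(t) = 2\int_{\mathbb B_2^+} x_{n+1}^a U\, U_t\, dX$; using the equation $x_{n+1}^a U_t = -\operatorname{div}(x_{n+1}^a A(x)\nabla U)$ (the backward form \eqref{exprob}, with the sign appropriate to the interval considered) and integrating by parts, the interior term gives $+\int x_{n+1}^a \langle A\nabla U, \nabla U\rangle\, dX \geq 0$, while the boundary contribution at $\{x_{n+1}=0\}$ vanishes because $\py U = 0$, and the contribution on $\partial \mathbb B_2^+$ is an error term to be absorbed. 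The conormal derivative on the lateral boundary of $\mathbb B_2^+$ is where localization enters: one either uses a cutoff $\eta$ supported in $\mathbb B_4^+$ and equal to $1$ on $\mathbb B_2^+$, so that $H(t)$ is replaced by $\int x_{n+1}^a \eta^2 U^2$, producing commutator terms involving $\nabla\eta$ that are controlled by the full energy on $\mathbb B_4^+ \times (-16,0]$, i.e. by $\theta$ times the normalizing denominator.

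**Main steps.** (i) Set $H_\eta(t) = \int_{\mathbb B_4^+} x_{n+1}^a \eta^2 U(X,t)^2\, dX$ with $\eta$ a fixed smooth cutoff; compute $H_\eta'(t)$ and, after integration by parts and use of $\py U = 0$, obtain $H_\eta'(t) \geq -C\int_{\mathbb B_4^+} x_{n+1}^a |U|\,|\nabla U|\, dX \geq -C\big(H_\eta(t) + \int_{\mathbb B_4^+} x_{n+1}^a|\nabla U|^2\big)$ on the relevant $t$-range. (ii) Invoke the Caccioppoli-type / regularity bounds in Lemma \ref{reg2} (and Lemma \ref{reg}) to estimate $\int x_{n+1}^a |\nabla U|^2$ on a slightly smaller region by $C\int_{\mathbb B_4^+\times(-16,0]} x_{n+1}^a \tilde U^2\, dX\, dt = C\,\theta\cdot\int_{\mathbb B_1^+} x_{n+1}^a U(X,0)^2\, dX$ by definition \eqref{theta}. (iii) Combine: writing $D = \int_{\mathbb B_1^+} x_{n+1}^a U(X,0)^2 dX$ and noting $H_\eta(0) \geq D$ (since $\eta = 1$ on $\mathbb B_1^+$ and $\mathbb B_1^+ \subset \mathbb B_2^+$), we get a differential inequality of the shape $H_\eta'(t) \geq -C H_\eta(t) - C\theta D$. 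Integrating this from $0$ to $t$ gives $H_\eta(t) \geq e^{-Ct}(H_\eta(0) - \text{(something like) } \theta D (e^{Ct}-1))$, so $H_\eta(t) \geq \tfrac12 D$ provided $t \leq 1/(N\log(N\theta))$ with $N$ chosen large enough depending on $C$ (hence on $n,a,A$); here one uses $e^{Ct}-1 \lesssim Ct$ and $Ct \lesssim 1/\log(N\theta)$ to make the error $\theta D \cdot Ct$ smaller than, say, $D/4$ because $t \log(N\theta) \lesssim 1/N$ kills the $\theta$ growth. (iv) Finally $H_\eta(t) \leq \int_{\mathbb B_2^+} x_{n+1}^a U(X,t)^2 dX$ (as $\eta\le 1$ and $\eta$ is supported in $\mathbb B_4^+$, but we only need $\eta\le 1$ and the integrand nonneg., restricting to $\mathbb B_2^+$ requires $\eta=1$ there — so actually $H_\eta(t)\le \int_{\mathbb B_4^+}$; to land on $\mathbb B_2^+$ I would instead take $\eta$ supported in $\mathbb B_2^+$), giving $N\int_{\mathbb B_2^+} x_{n+1}^a U(X,t)^2 \geq D$ after adjusting the constant.

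**Main obstacle.** The delicate point is the bookkeeping that forces the time scale $1/(N\log(N\theta))$: one must show the error term generated by $\int x_{n+1}^a|\nabla U|^2$ — which a priori is only bounded by $\theta$ times the reference quantity and so could be huge — is defeated by the shortness of the time interval. This is exactly where the logarithm appears: $\theta$ enters linearly through the gradient bound, while the interval length enters linearly through Grönwall, so their product is $\theta \cdot t \lesssim \theta/\log(N\theta)$, which is \emph{not} small. The resolution, as in \cite{ABDG}, is to be more careful: instead of crudely bounding $H'_\eta \geq -C\theta D$ one keeps the positive coercive term $+\int x_{n+1}^a\langle A\nabla U,\nabla U\rangle$ and only sacrifices a small multiple of it (via Cauchy–Schwarz on the $U\nabla U$ cross term, $C|U||\nabla U| \le \epsilon|\nabla U|^2 + C_\epsilon U^2$), so that the genuinely dangerous contribution is only the commutator term supported where $\nabla \eta \ne 0$, i.e. on the annulus $\mathbb B_2^+\setminus \mathbb B_1^+$ region, and that term is then absorbed either by choosing $\eta$ with small gradient or by a further use of the regularity estimates localized away from $\{x_{n+1}=0\}$. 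Getting the constants to line up so that the final bound is clean — and in particular verifying $N\log(N\theta)\ge 1$ and the claimed slice inequality simultaneously — is the technical heart of the argument, but it is a direct adaptation of \cite[Lemma 3.1]{ABDG} to the variable-coefficient weighted setting, using Lemmas \ref{reg} and \ref{reg2} in place of the constant-coefficient estimates there.
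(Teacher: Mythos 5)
There is a genuine gap, and it sits exactly where you flagged it. Your quantity $H_\eta(t)=\int x_{n+1}^a\eta^2U^2\,dX$ with a plain spatial cutoff produces, after integration by parts, a commutator term supported on the annulus where $\nabla\eta\neq 0$; the only bound available for it is (via Lemma \ref{reg}, Lemma \ref{reg2} and \eqref{theta}) a constant times $\theta\,D$ per unit time, so Gr\"onwall over an interval of length $t\sim 1/(N\log(N\theta))$ leaves you with an error of size $\theta D/\log(N\theta)$, which is huge, not small. Neither of your proposed remedies closes this: a cutoff transitioning between $\mathbb B_{3/2}$ and $\mathbb B_2$ has $|\nabla\eta|$ bounded below by a fixed constant, and further interior regularity gives no smallness in $\theta$ of the solution on the annulus. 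Keeping part of the coercive term $\int x_{n+1}^a\langle A\nabla f,A\nabla f\rangle$ also does not help, because the dangerous term is not a multiple of that integral restricted to the annulus with a small constant --- it is simply of order $\theta D$.

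The paper's proof supplies the missing mechanism: instead of a flat weight it uses, for each fixed $Y\in\mathbb B_1^+$, the quantity $H(t)=\int_{\mathbb R^{n+1}_+}x_{n+1}^a f(X,t)^2\,\mc{G}(Y,X,t)\,dX$ with $f=\phi U$, $\phi$ a cutoff equal to $1$ on $\mathbb B_{3/2}$ and supported in $\mathbb B_2$, where $\mc{G}$ is the fundamental solution \eqref{fund} of the extension operator. The cutoff errors live in $\mathbb B_2^+\setminus\mathbb B_{3/2}^+$, hence at distance at least $1/2$ from $Y$, and there the kernel satisfies $\mc{G}(Y,X,t)\le e^{-1/(Mt)}$ for $0<t\le 1$ --- this is proved from the Gaussian bounds \eqref{gbd} for $p$ and the Bessel asymptotics \eqref{bessel} for $p_a$. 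This exponential smallness in $1/t$ is exactly what defeats $\theta$: one needs $Ne^{-1/(Nt)}\theta\le\tfrac12$, i.e.\ $t\le 1/(N\log(2N\theta))$, and that is where the logarithmic time scale comes from; no power-of-$t$ or Gr\"onwall-type smallness can produce it. Finally, the slice inequality over $\mathbb B_1^+$ is recovered not by the trivial bound $H_\eta(0)\ge D$ but by letting $t\to 0^+$ in $H$ (approximation to the identity \eqref{st}, giving $U(Y,0)^2$ pointwise) and then integrating the resulting inequality in $Y$ over $\mathbb B_1^+$, exchanging the order of integration and using the stochastic completeness \eqref{stoc1}. So while your overall philosophy (differentiate a weighted energy, exploit the equation and the Neumann condition, absorb cutoff errors, tune $N$) matches the paper, the specific Gaussian-weighted monotonicity with the kernel centered in $\mathbb B_1^+$ is the essential idea your argument lacks, and without it the claimed differential inequality cannot yield the stated time scale.
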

	\begin{proof}
		Let  $f= \phi U,$ where $\phi \in C_0^{\infty}(\mathbb B_2)$ is a spherically symmetric cutoff such that $0\le \phi\le 1$ and $\phi \equiv1$ on $\mathbb B_{3/2}.$ Since $U$ solves \eqref{exprob} and $\phi$ is independent of $t$ and symmetric in $x_{n+1}$, it is easily seen that the function $f$ solves the problem
		\begin{equation}\label{feq}
			\begin{cases}
				x_{n+1}^a f_t + \operatorname{div}(x_{n+1}^a \nabla f) = 2 x_{n+1}^a \langle\nabla U,\nabla \phi\rangle  + \operatorname{div}(x_{n+1}^a \nabla \phi) U\ \ \ \ \ \ \ \text{in} \ \mathbb B_4^+ \times (-16, 0],
				\\	
				f(x,0,t)= u(x,t)\phi(x,0)
				\\
				\py f(x,0, t)= 0\ \ \ \ \ \ \ \ \ \ \ \ \ \ \ \ \ \ \ \ \ \ \ \ \ \ \ \ \ \ \ \ \ \text{in}\ B_4 \times [0,16).
			\end{cases}
		\end{equation}
		Again since $\phi$ is symmetric in $x_{n+1}$, we have $\partial_{n+1} \phi \equiv 0$ on the thin set $\{x_{n+1}=0\}$. This fact and the smoothness of $\phi$ imply that $\frac{\phi_y}{y}$ be bounded up to $\{y=0\}$. Therefore we observe that the following is true 
		\begin{equation}\label{obs1}
			\begin{cases}
				\operatorname{supp} (\nabla \phi) \cap \{x_{n+1}>0\}  \subset \mathbb B_2^+ \setminus \mathbb B_{3/2}^+
				\\
				|\operatorname{div}(x_{n+1}^a \nabla \phi)| \leq C x_{n+1}^a\ \mathbf 1_{\mathbb B_2^+ \setminus \mathbb B_{3/2}^+},
			\end{cases}
		\end{equation}
		where for a set $E$ we have denoted by $\mathbf 1_E$ its indicator function.
		
		We now fix a point $Y \in \mb{R}^{n+1}_+$ and introduce the quantity
		\begin{align*}
			H(t) = \int_{\mb{R}^{n+1}_+} x_{n+1}^a f(X,t)^2 \mc{G}(Y,X,t) dX,
		\end{align*}
		where $\mc{G}$ is as in \eqref{fund}. We note that for $t>0$, $\mc{G} = \mc{G}(Y, \cdot)$ solves
		\begin{equation}\label{eq}
			\operatorname{div}(x_{n+1}^a \nabla \mc{G}) = x_{n+1}^a \partial_t \mc{G}.
		\end{equation}
		Before proceeding further, we remark that in the ensuing computations below,  the formal differentiation under the integral sign and the integration by parts    can be  justified  by an approximation argument by first considering the integrals in the region $\{x_{n+1} >\ep\}$ and then by letting $\ep \to 0$ using the regularity estimates in Lemma \ref{reg} and Lemma \ref{reg2}.
		Thus in view of this, By differentiating $H'$, we observe using \eqref{eq} that the following holds
		\begin{align}\label{hprime}
			H'(t) & = 2 \int_{\mb{R}^{n+1}_+} x_{n+1}^a f f_t \mc{G} + \int x_{n+1}^a f^2 \dd_t\mc{G} \\
			& = 2 \int_{\mb{R}^{n+1}_+} x_{n+1}^a f f_t \mc{G} + \int_{\mb{R}^{n+1}} f^2 \tn{div}\left(x_{n+1}^a A(x,t)\nabla \mc{G} \right)\notag \\
			& = 2 \int f \mc{G} \left( x_{n+1}^a f_t + \tn{div} \left(x_{n+1}^a A(x,t) \cdot \nabla f \right) \right) + 2 \int x_{n+1}^a  \mc{G} \langle \nabla f , A(x,t) \nabla f \rangle.\notag
		\end{align}
		For $Y \in \mathbb B_1^+$, we now claim that the following estimate holds
		\begin{equation}\label{ai1}
			I_1 :=  2 \int f \mc{G} \left( x_{n+1}^a f_t + \tn{div} \left(x_{n+1}^a A(x,t) \cdot \nabla f \right) \right) \geq - N e^{-1/Nt} \int_{\mathbb B_4^+ \times (-16, 0]} x_{n+1}^a U^2 dXdt,
		\end{equation}   
		for some universal $N$.
		We argue as in \cite{ABDG}. In order to establish \eqref{ai1}, we need the following asymptotics of $I_{\frac{a-1}{2}}$ which asserts that there exists  $C(a), c(a) >0$  such that 
		\begin{equation}\label{bessel}
			I_{\frac{a-1}{2}}(z) \leq C(a) z^{\frac{a-1}{2}}  \hspace{6mm} \text{if} \hspace{2mm} 0 < z \le c(a),\ \ \ \ \ I_{\frac{a-1}{2}}(z) \leq C(a) z^{-1/2} e^z \hspace{2mm}\ \  \text{if} \hspace{2mm} z \ge c(a).
		\end{equation}
		See for instance \cite[formulas (5.7.1) and (5.11.8)]{Le}.  We then write the integral on the left hand side  in \eqref{ai1} as   $I_1^1 + I_1^2,$ where $I_1^1$ is integral on the set  $ \mathcal A=\{ X\in \mb{R}^{n+1}_+ \mid x_{n+1}y_{n+1} > 2 t c(a)\}$ and  $I_1^2$ is the integral on the complement $\mathcal A_e$ of $\mathcal A$. We want to bound $I_1$ by appropriately bounding $\mc{G}$ from above in each of the sets $\mathcal A$ and $\mathcal A_e$. In this respect it is important to note that  in view of \eqref{feq} and \eqref{obs1}, the integral in the definition of $I_1$ is actually performed in $X \in \mathbb B^+_2 \setminus  \mathbb B^+_{3/2}$ and on such set we have for every $Y \in \mathbb B^+_1$
		\begin{equation}\label{X}
			\frac{1}{2} \le |X-Y|\le 3.
		\end{equation}
		Our objective is to prove that when $Y \in \mathbb B^+_1$, $X \in \mathbb B^+_2 \setminus  \mathbb B^+_{3/2}$ and $0<t\leq 1$, the following bound holds for some universal $M>0$ 
		\begin{equation}\label{g4}
			\mc{G}(Y, X, t) \leq e^{-\frac{1}{M t}}.
		\end{equation}	   
		To prove that \eqref{g4}  holds when $X \in \mathcal A\cap(\mathbb B^+_2 \setminus  \mathbb B^+_{3/2})$ we argue as follows. Since for $X\in \mathcal A$ we have $\frac{x_{n+1}y_{n+1}}{2t} >  c(a)$, by the second inequality in \eqref{bessel} we have
		\begin{equation}\label{gud}
			I_{\frac{a-1}{2}}\left(\frac{x_{n+1} y_{n+1}}{2t}\right) \le C(a) \left(\frac{x_{n+1} y_{n+1}}{2t}\right)^{-1/2} e^{\frac{x_{n+1} y_{n+1}}{2t}}.
		\end{equation}
		Consider first the case $-1<a \leq 0$. Since for $X\in \mathbb B^+_2$ and $Y\in \mathbb B^+_1$ we trivially have $\frac{x_{n+1}y_{n+1}}{2t} \leq \frac{4}t$, in such case we have $\left(\frac{x_{n+1}y_{n+1}}{2t}\right)^{-a/2} \leq  2^{-a/2}t^{a/2}$. Using this estimate and \eqref{gud} in \eqref{pa},  we obtain
		\begin{align*}
			p_a(y_{n+1},x_{n+1},t) 
			\leq C^\star(a) t^{-1/2} e^{-\frac{( y_{n+1} - x_{n+1})^2}{4t}}.
		\end{align*}
		Combining this bound with \eqref{gbd} we infer that for $Y \in \mathbb B^+_1$ and $X \in \mathcal  A$ 
		\begin{equation}\label{g20}
			\mc{G}(Y, X, t) \leq  C\ t^{-\frac{n+1}{2}} e^{- \frac{|x-y|^2}{N_0t} -\frac{( y_{n+1} - x_{n+1})^2}{4t} }.
		\end{equation}
		
		On the other hand if $a>0$, then we have $\left(\frac{x_{n+1} y_{n+1}}{2t}\right)^{-a/2} \leq	c(a)^{-a/2}$ for $X \in \mathcal A$. Using this estimate and \eqref{gud} in \eqref{pa} we find
		\begin{align*}
			p_a(y_{n+1},x_{n+1},t)  \le C^{\star\star}(a) t^{-\frac{a+1}2} e^{-\frac{(y_{n+1} - x_{n+1})^2}{4t}}.
		\end{align*}
		Combining this bound with \eqref{gbd} we infer that for $Y \in \mathbb B^+_1$ and $X\in  \mathcal A$ 
		\begin{equation}\label{g2}
			\mc{G}(Y, X, t) \leq  C\ t^{-\frac{n+1+a}{2}} e^{- \frac{|x-y|^2}{N_0t} -\frac{( y_{n+1} - x_{n+1})^2}{4t} }.
		\end{equation}
		From \eqref{g20} and \eqref{g2} and \eqref{X} we conclude that when $Y \in \mathbb B^+_1$, $X \in  \mathcal A\cap(\mathbb B^+_2 \setminus  \mathbb B^-_{3/2})$ and $0<t\le 1$, the following bound holds for some universal $C>0$ and for $l= \max\{\frac{n+1}{2},\frac{n+1+a}{2}\}$
		\[
		\mc{G}(Y, X, t) \leq  C\ t^{-l} e^{-\frac{1}{Ct}}.
		\] 
		From this inequality above, \eqref{g4} immediately follows when $X\in \mathcal A\cap(\mathbb B^+_2 \setminus  \mathbb B^+_{3/2})$. If instead $X\in \mathcal A_e \cap(\mathbb B^+_2 \setminus  \mathbb B^+_{3/2})$, keeping in mind that on the set  $\mathcal A_e$ we have $\frac{x_{n+1}y_{n+1}}{2t} \le c(a)$, by the first inequality in \eqref{bessel} we obtain that for all $a \in (-1, 1)$
		\[
		I_{\frac{a-1}{2}}\left(\frac{x_{n+1}y_{n+1}}{2t}\right) \leq C(a) \left(\frac{x_{n+1} y_{n+1}}{2t}\right)^{\frac{a-1}{2}}. 
		\]
		Using this in \eqref{pa} we find
		\[
		p_a(y_{n+1},x_{n+1},t) \le C(a) (2t)^{-\frac{a+1}{2}} e^{-\frac{y_{n+1}^2+x_{n+1}^2}{4t}} \le C^\star(a) t^{-\frac{a+1}{2}} e^{-\frac{(y_{n+1}-x_{n+1})^2}{8t}} .
		\]
		Combining this bound with \eqref{gbd} we again conclude that for $Y\in \mathbb B^+_1$, $0<t\le 1$ and $X\in \mathcal A_e   \cap(\mathbb B^+_2 \setminus  \mathbb B^+_{3/2})$\begin{equation*}
			\mc{G}(Y,X,t) \leq Ct^{-\frac{n+1+a}{2}} e^{-\frac{1}{Ct}}.
		\end{equation*}
		Thus we find that \eqref{g4} holds. Now using \eqref{g4} in the definition of $I_1$ and also by using \eqref{feq} and \eqref{obs1} we finally obtain 
		\[
		|I_1| \le C e^{-\frac{1}{Mt}} \int_{\mathbb B^+_2} x_{n+1}^a \left(|\nabla U| + |U|\right) |U|.
		\]
		We can now appeal to  the  $L^{\infty}$ bounds for $U, \nabla U, U_t$  as in Lemma \ref{reg} to finally conclude that for every $Y\in \mathbb B^+_1$ and $0<t\le 1$ the inequality \eqref{ai1} holds.
		
		Using \eqref{ai1} in \eqref{hprime}, we obtain
		\begin{equation}\label{h1}
			H'(t) \geq  - N e^{-1/Nt} \int_{\mathbb B_4^+ \times (-16, 0]} x_{n+1}^a U^2 dXdt.\end{equation}	   
		
		Now from the approximation to identity property  \eqref{st}    it follows that
		\begin{equation}\label{st1}
			\operatorname{lim}_{t \to 0^+} H(t)= U(Y, 0)^2.
		\end{equation}
		Using \eqref{st1} in \eqref{h1} we obtain
		\begin{equation}\label{st2}
			H(t) \geq U(Y, 0)^2 - N e^{-1/Nt} \int_{\mathbb B_4^+ \times (-16, 0]} x_{n+1}^a U^2 dXdt.
		\end{equation}
		Now by integrating \eqref{st2} with respect to $Y$ in $\mathbb B_1^+	$, exchanging the order of integration and using \eqref{stoc1} we obtain
		\begin{equation}\label{st4}
			\int_{\mathbb B_2^+} x_{n+1}^aU(X, t)^2  dX  \geq \int_{\mathbb B_1^+} x_{n+1}^a U(X, 0)^2 dX - N e^{-1/Nt} \int_{\mathbb B_4^+ \times (-16, 0]} x_{n+1}^a U^2 dXdt.
		\end{equation}
		Note that in \eqref{st4} above, we have renamed the variable $Y$ as $X$. Now from the $L^{\infty}$ bound on $U$ as in Lemma \ref{reg} that the following estimate holds
		\begin{equation}\label{st5}
			\int_{\mathbb B_1^+} x_{n+1}^a U(X, 0)^2 dX \leq C  \int_{\mathbb B_4^+ \times (-16, 0]} x_{n+1}^a U^2 dXdt.
		\end{equation}
		Note that \eqref{st5} in particular implies that $\theta$ as defined in \eqref{theta} is bounded from below away from zero.  Now if we let 
		\begin{equation}\label{t005}
			t \leq \frac{1}{N \operatorname{log}(2N \theta)},\end{equation}
		then we find  from the definition of $\theta$ in \eqref{theta} that
		\begin{equation}\label{s2}
			N e^{-1/Nt} \int_{\mathbb B_4^+ \times (-16, 0]} x_{n+1}^a U^2 dXdt < \frac{1}{2} \int_{\mathbb B_1^+} x_{n+1}^a U(X, 0)^2 dX.\end{equation}
		Using \eqref{s2} in \eqref{st4} we have
		\begin{equation}\label{s4}
			2 \int_{\mathbb B_2^+} x_{n+1}^aU(X, t)^2  dX  \geq \int_{\mathbb B_1^+} x_{n+1}^a U(X, 0)^2 dX,
		\end{equation}
		for all $t$ satisfying \eqref{t005}.
		Thus by letting $2N$ as our new $N$, we find that the conclusion of the lemma follows.
	\end{proof}
	Now given  the Carleman estimate in Lemma \ref{carl} and the monotonicity result  in Lemma \ref{mont}, using Lemma \ref{reg} and the integrability of the second derivatives as in Lemma \ref{reg2},  one can now repeat the arguments  as  in \cite[pages 11- 13]{EFV_2006} ( see also \cite{ABDG}) to  assert that the following conditional doubling inequality holds under the assumption \eqref{ass}.

	\begin{theorem}\label{db1}
		Let $U$ be a solution of \eqref{exprob} in $\mathbb B_4^+ \times [0, 16).$ There exists $N>2$, depending on $n$, $a$  for which $N\log(N\theta) \ge 1$ and such that:
		\begin{itemize}
			\item[(i)] For $r \leq 1/2,$ we have 
			$$\int_{\mathbb B_{2r}^+}x_{n+1}^aU(X,0)^2 dX \leq (N \theta)^N\int_{\mathbb B_{r}}x_{n+1}^aU(X,0)^2 dX.$$
			Moreover for  $r \leq 1/\sqrt{N \operatorname{log}(N \theta)}$  the following two inequalities hold:
			\item[(ii)] $$\int_{\mathbb B_{2r}^+ \times [0, 4r^2)}x_{n+1}^aU(X,t)^2 dXdt \leq \operatorname{exp}(N \operatorname{log}(N \theta) \operatorname{log}(N \operatorname{log}(N \theta)))r^2 \int_{\mathbb B_{r}^+}x_{n+1}^aU^2(X,0)dX.$$
			\item[(iii)]$$\int_{\mathbb B_{2r}^+ \times [0, 4r^2)} x_{n+1}^aU(X,t)^2 dXdt \leq \operatorname{exp}(N \operatorname{log}(N \theta) \operatorname{log}(N \operatorname{log}(N \theta)))\int_{\mathbb B_r^+ \times [0, r^2)}x_{n+1}^aU(X,t)^2dXdt.$$ 
		\end{itemize}
	\end{theorem}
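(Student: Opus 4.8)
The plan is to follow the scheme of Escauriaza--Fern\'andez--Vessella in \cite[pp. 11--13]{EFV_2006}, together with its nonlocal adaptation in \cite{ABDG}, feeding in the Carleman estimate of Lemma \ref{carl} and the time--monotonicity of Lemma \ref{mont} as the two analytic ingredients, and using the regularity Lemmas \ref{reg} and \ref{reg2} throughout to make the cut-off and integration-by-parts manipulations rigorous in the degenerate setting. The standing hypothesis \eqref{ass} is what makes $\theta$ in \eqref{theta} finite and, via \eqref{st5}, bounded below.

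First I would derive a quantitative interpolation (``three-cylinder'') inequality for solutions of \eqref{exprob} out of the Carleman estimate \eqref{car10}. One fixes a smooth cut-off $\eta=\eta(X)$, even in $x_{n+1}$ (so that $\py\eta\equiv 0$ on the thin set), with $\eta\equiv 1$ on $\mathbb B_2^+$ and $\operatorname{supp}\eta\subset\mathbb B_3^+$, together with a cut-off in time localizing to $[0,1/(3\lambda))$, and applies \eqref{car10} to $w=\eta U$. Since $\widetilde{\mathcal H}U=0$, one has $\widetilde{\mathcal H}(\eta U)=[\widetilde{\mathcal H},\eta]U$, a first-order expression supported in the annulus $\mathbb B_3^+\setminus\mathbb B_2^+$ and pointwise bounded there by $Cx_{n+1}^a(|\nabla U|+|U|)$ in view of \eqref{lip1}; the $L^2$-integrability of $\nabla U$, $\partial_t U$ and of $x_{n+1}^a\partial_{n+1}U$ furnished by Lemma \ref{reg2} is exactly what legitimizes the integrations by parts built into \eqref{car10}. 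On the annulus the weight $\sigma^{1-2\alpha}(t)\,x_{n+1}^{-a}\,G$ is $O(\alpha^{C\alpha}e^{-c\alpha})$ because $|X|\ge 2$ there, while on a small cylinder $\mathbb B_r^+\times[0,r^2)$ the weights $\sigma_c^{-2\alpha}(t)G_c$ are bounded below; rearranging \eqref{car10}, absorbing the $O(|X|+t)$ error terms coming from $A-\mathbb I_{n+1}$ as was already arranged in the proof of Theorem \ref{carl1}, and optimizing over large $\alpha$ and over the free parameters $\delta,c$ (all tied through $\lambda=\alpha/\delta^2$) as on \cite[pp. 11--13]{EFV_2006}, one arrives at an estimate of the form
\begin{equation*}
\int_{\mathbb B_r^+\times[0,r^2)}x_{n+1}^a U^2\le C\Big(\int_{\mathbb B_{r/2}^+\times[0,(r/2)^2)}x_{n+1}^a U^2\Big)^{\gamma}\Big(\int_{\mathbb B_4^+\times[0,16)}x_{n+1}^a U^2\Big)^{1-\gamma},
\end{equation*}
valid for dyadic radii down to the scale $r_0\sim 1/\sqrt{N\log(N\theta)}$, with $\gamma=\gamma(n,a)\in(0,1)$.

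Next I would iterate. Because $\theta$ in \eqref{theta} carries the $t=0$ slice in its denominator, \eqref{st5} together with Lemma \ref{mont} supplies a base estimate $\int_{\mathbb B_{1/2}^+\times[0,1/4)}x_{n+1}^a U^2\gtrsim (N\theta)^{-N}\int_{\mathbb B_1^+\times[0,1)}x_{n+1}^a U^2$; applying the interpolation inequality across the $\sim\log(N\log(N\theta))$ dyadic scales separating the unit scale from $r_0$ and compounding the multiplicative loss at each step yields the space-time doubling estimates (ii) and (iii), the constant $\exp(N\log(N\theta)\log(N\log(N\theta)))$ emerging because a per-step loss of size $\sim\exp(N\log(N\theta))$ is compounded over $\sim\log(N\log(N\theta))$ steps. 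Below the scale $r_0$ no further iteration is needed, as Lemma \ref{mont} already controls the short-time behaviour directly. Finally, (i) --- the purely spatial doubling at $t=0$ --- is extracted from (iii) by combining the space-time doubling with the time-monotonicity of Lemma \ref{mont}, which allows one to pass between a time-slice integral and a short space-time cylinder integral (after a harmless parabolic rescaling to unit scale), exactly as in \cite[pp. 11--13]{EFV_2006} and \cite{ABDG}.

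The hard part, as is typical in this circle of ideas, is not conceptual but lies in the bookkeeping forced by the degenerate weight $x_{n+1}^a$ and the conormal (Neumann) condition on $\{x_{n+1}=0\}$. One must check that $w=\eta U$ is genuinely an admissible competitor in \eqref{car10} --- in particular that $\py w\equiv 0$ persists after multiplication by the even cut-off $\eta$ and that the second-order weighted integrability needed for the Rellich-type manipulations in the proof of Theorem \ref{carl1} holds, which is precisely the role of Lemma \ref{reg2}; one must control the commutator term in the \emph{weighted} norm carrying the singular factor $x_{n+1}^{-a}$, using interior regularity of $\tilde U$ away from the inner boundary of the annulus; and one must track precisely how the powers of $\alpha$ and the parameters $\delta,c$ propagate through the iteration so that the doubling constant comes out in exactly the claimed form, with dependence only on $n$ and $a$ (and on the ellipticity and Lipschitz character of $A$ through the constants in Lemmas \ref{carl1}, \ref{reg} and \ref{reg2}). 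Since these are precisely the points handled in \cite{EFV_2006} and \cite{ABDG}, the argument goes through essentially verbatim once the degenerate Carleman estimate and the monotonicity lemma are in place.
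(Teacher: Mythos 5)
Your list of ingredients is the right one, and it coincides with what the paper does at this point: the paper gives no written proof of Theorem \ref{db1}, but simply asserts that with the Carleman estimate of Lemma \ref{carl}, the monotonicity Lemma \ref{mont} and the regularity Lemmas \ref{reg}, \ref{reg2} in hand one can repeat the arguments of \cite[pp.\ 11--13]{EFV_2006} and \cite{ABDG}. Your preparatory steps (cutting off $U$ by an even $\eta$ so that $\py(\eta U)=0$, writing $\widetilde{\mc H}(\eta U)=[\widetilde{\mc H},\eta]U$ supported in an annulus, and using Lemma \ref{reg2} to justify admissibility) are indeed part of that argument. The problem is the mechanism you then propose, which is not the one in \cite{EFV_2006}/\cite{ABDG} and which, as described, does not yield the theorem. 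First, a three-cylinder interpolation inequality with a fixed exponent $\gamma$ iterated over dyadic scales cannot give (ii)--(iii) with a constant independent of $r$: iterating such an inequality from the unit scale down $k$ steps compounds the loss in the \emph{exponent} (one picks up powers of $\theta$ growing like $\gamma^{-k}$), so the constant degrades as $r\to 0$, whereas (ii)--(iii) are asserted with one fixed constant for \emph{all} $r\le 1/\sqrt{N\log(N\theta)}$, in particular for arbitrarily small $r$. Your claim that ``below the scale $r_0$ no further iteration is needed, as Lemma \ref{mont} already controls the short-time behaviour'' is where the argument breaks: Lemma \ref{mont} only bounds $\int_{\mathbb B_2^+}x_{n+1}^aU(\cdot,t)^2$ from below by $\int_{\mathbb B_1^+}x_{n+1}^aU(\cdot,0)^2$ at the \emph{fixed} spatial scale; it carries no information about the small balls $\mathbb B_r^+$, $\mathbb B_{2r}^+$ appearing in (ii)--(iii).

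In the actual EFV/ABDG scheme the uniformity in $r$ comes from a \emph{single} application of the shifted Carleman estimate \eqref{car10} with the time translation $c\simeq r^2$, so that the Gaussian $G(X,t+c)$ localizes at spatial scale $r$; the boundary terms at $t=0$ (the negative gradient term being handled through the Hardy inequality, Lemma \ref{hardy}) produce the slice integrals over $\mathbb B_r^+$, and Lemma \ref{mont} is used only at unit scale, to guarantee the lower bound that lets one absorb the global error term $\alpha^{c'\alpha}\sup_{t}\int x_{n+1}^a(w^2+t|\nabla w|^2)$ once $\alpha$ is chosen of size $N\log(N\theta)$. The doubling constant $\exp\bigl(N\log(N\theta)\log(N\log(N\theta))\bigr)$ is then exactly $\alpha^{C\alpha}$ for that choice of $\alpha$ --- it is not obtained by compounding per-step losses over $\log(N\log(N\theta))$ scales. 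Finally, (i) is stated for every $r\le 1/2$, while (iii) only holds for $r\le 1/\sqrt{N\log(N\theta)}$, so (i) cannot be ``extracted from (iii)'' as you suggest (and a parabolic rescaling to unit scale is not harmless, since it changes $\theta$); in \cite{EFV_2006} and \cite{ABDG} the spatial doubling at the time slice is a separate consequence of the $t=0$ boundary terms in the Carleman inequality combined with Lemma \ref{mont}, not a corollary of the space-time doubling.
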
 	
	\subsection{Proof of Theorem \ref{GUCP}}
	With Theorem \ref{db1} in hand, we now proceed with the proof of Theorem \ref{GUCP} by means of blowup argument  inspired by   that in \cite{ABDG} and \cite{BG}.
	
	\begin{proof}[Proof of Theorem \ref{GUCP}]
		Without loss of generality, we assume that $A(0)=\mathbb I$ and also that  $u \in \mb{H}^s$ solves $\mc{H}^s u=0$ in $B_4 \times (-16, 0]$ and vanishes  in $B_4 \times (-16, 0]$. It suffices to show that    for the solution $U$ to the extension problem \eqref{exprob} ( by changing $t \to -t$), we must have 
		\begin{equation}\label{crucial}
			U(X,0) \equiv 0, \ \ \ \ \ \ \ \text{for every}\ X\in \mathbb B_1^+.
		\end{equation}
		
		Once \eqref{crucial} is proven, 
		we then note that, away from the thin set $\{x_{n+1}=0\}$, $U$ solves a uniformly parabolic PDE with Lipschitz coefficients and vanishes identically in the half-ball $\mathbb B_1^+$.  We can thus appeal to \cite[Theorem 1]{AV} to assert that $U$ vanishes to infinite order both in space and time  at every $(X,0)$ for $X \in \mathbb B_1^+$. At this point, we can  use the strong unique continuation  result  in   \cite[Theorem 1]{EF_2003}  to finally conclude that $U(X,0) \equiv 0$ for $X\in \mathbb R^{n+1}_+$. Letting $x_{n+1}=0$, this implies $u(x,0) = U(x,0,0) \equiv 0$ for $x\in \mathbb R^n$.  Similarly, we can show that $u(\cdot, t) \equiv 0$ for all $t \in (-16, 0)$ and thus  Theorem \ref{GUCP} would follow.

		Therefore we are left with  establishing  the claim  in \eqref{crucial}. We argue by contradiction and assume that \eqref{crucial} is not true. Consequently, \eqref{ass} does hold and therefore we can use the results in Theorem \ref{db1}.  In particular from  (i) in Theorem \ref{db1} it follows that $\int_{\mathbb B_r^+}x_{n+1}^a U(X, 0)^2 dX > 0$ for all $0<r \leq \frac{1}{2}$. From this fact and the continuity of $U$ up to the thin set $\{x_{n+1}=0\}$ we deduce that
		\begin{equation}\label{nzero}
			\int_{\mathbb B_r^+ \times [0, r^2)} x_{n+1}^aU^2  dX dt>0, \end{equation} for all $0< r \leq 1/2$. Moreover, the  inequality (iii) in Theorem \ref{db1} holds, i.e. there exist $r_0$ and $C$ depending on $\theta$ in \eqref{theta} such that for all $r\leq r_0$ one has
		\begin{equation}\label{jb}
			\int_{\mathbb B_r^+ \times [0, r^2)} x_{n+1}^aU^2 dX dt \leq C \int_{\mathbb B_{r/2}^+ \times [0, r^2/4)} x_{n+1}^aU^2  dXdt.
		\end{equation}
		From this doubling estimate we can derive in a standard  manner  the following inequality for all $r \leq \frac{r_0}{2}$
		\begin{equation*}
			\int_{\mathbb B_r^+ \times [0, r^2)} x_{n+1}^aU^2  dX dt  \geq \frac{r^{M}}{C}   \int_{\mathbb B_{r_0}^+ \times [0, r_0^2)} x_{n+1}^aU^2 dX dt,
		\end{equation*}
		where $M= \operatorname{log}_2 C$. Letting $C_0=\frac{1}{C}\int_{\mathbb B_{r_0}^+ \times [0, r_0^2)} U^2 y^a dX dt$, and noting that $C_0>0$ in view of \eqref{nzero}, we can rewrite the latter inequality as
		\begin{equation}\label{fvan1}
			\int_{\mathbb B_r^+ \times [0, r^2)} U^2 y^a dX dt  \geq C_0 r^{M}.
		\end{equation}
		
		Let now $r_j\searrow 0$ be a sequence such that $r_j \leq r_0$ for every $j\in \mathbb N$, and define
		\[
		U_j(X,t) = \frac{U(r_jX, r_j^2 t)}{\bigg(\frac{1}{r_j^{n+3+a}}\int_{\mathbb B_{r_j}^+ \times [0, r_j^2)} x_{n+1}^aU^2  dX dt\bigg)^{1/2}}. 
		\]
		Note that thanks to \eqref{nzero} the functions $U_j$'s are well defined. Furthermore,   by a change of variable, we note 
		\begin{equation}\label{bound}
			\int_{\mathbb B_1^+ \times [0,1)} x_{n+1}^aU_j^2  dX dt=1.\end{equation}
		Again by a change of variable  and by using  the doubling  inequality \eqref{jb}, we have for all $j$
		\begin{equation}\label{nondeg}
			\int_{\mathbb B_{1/2}^+ \times [0, 1/4)} x_{n+1}^aU_j^2 dX dt \geq C^{-1}.
		\end{equation}
		Moreover $U_j$ solves the following problem in $\mathbb B_1^+ \times [0, 1)$
		\begin{equation}\label{expb1}
			\begin{cases}
				\operatorname{div}( x_{n+1}^a  A(r_j x)\nabla U_j) + x_{n+1}^a \partial_t U_j=0,
				\\
				\py U_j (x,0,t) = 0.
			\end{cases}
		\end{equation}
		From \eqref{bound} and the regularity estimates in Lemma \ref{reg}  and Lemma \ref{reg2}  we infer that, possibly passing to a subsequence which we continue to indicate with $U_j$, we have $U_j \to U_0$ in $H^{1+\alpha}(\mathbb B_{3/4}^+ \times [0, 9/16))$ up to $\{x_{n+1}=0\}$. We infer in a standard way by a weak type argument that  the blowup limit $U_0$ solves in $\mathbb B_{3/4}^+ \times [0, 9/16)$ 
		\begin{equation}\label{expb2}
			\begin{cases}
				\operatorname{div}( y^a \nabla U_0) + y^a \partial_t U_0=0,
				\\
				\py U_0 (x,0, t) = 0.
			\end{cases}
		\end{equation}
		Since $U(x, 0, t)$ vanishes identically in $B_4 \times [0, 16)$, it follows on account of uniform convergence of $U_j$'s to $U_0$  that $U_0(x, 0, t) \equiv 0$ in $ B_{1/2} \times [0, 1/4).$
		On the other hand, from the uniform convergence  of $U_j$'s in $\mathbb B_{1/2}^+ \times [0, 1/4)$ and  the non-degeneracy estimate \eqref{nondeg} we also have
		\begin{equation}\label{nondeg1}
			\int_{\mathbb B_{1/2}^+ \times [0, 1/4)} x_{n+1}^aU_0^2  dX dt \geq C^{-1},
		\end{equation}
		and thus $U_0\not\equiv 0$ in $ \mathbb B_{1/2}^+ \times [0,1/4)$.  This violates the weak unique continuation property in Theorem \ref{wkt}.  Therefore \eqref{crucial} must be true.
		Now  in view of  our discussion after \eqref{crucial}, we find that the conclusion of the theorem thus follows.
		
	\end{proof}

 	
 	\section{Applications to Calder\'on inverse problems} \label{Applications in Inverse Problems}
    In this section, we obtain the  unique recovery result of the potential $q$ as in the initial-exterior problem \eqref{ini-ext prob} from the nonlocal DN map \eqref{DN map}. We rigorously define the DN map introduced in \eqref{DN map} and then derive an Alessandrini type identity in this context. This will be followed by the Runge approximation result which will be  a byproduct of  unique continuation result Theorem \ref{GUCP}. This is  similar to that in \cite{GSU} and \cite{CLR}. As previously mentioned in the introduction, such a Runge type approximation argument allows to bypass the method of CGO solutions and this aspect is quite specific to nonlocal problems. In this section, we closely follow the approach in \cite{CLR}.
    We start by defining the abstract trace space as follows
    \begin{align*}
        \mb{X} := \mc{H}^s \left(\mb{R}^{n} \times [-T,T]\right) \setminus \ch. \end{align*}
 The norm in $\mb{X}$ is defined in an analogous way as \eqref{subset}.
  Before moving on to the definition of the DN map, we would like to  stress  the fact that the solution $ u $ in \eqref{ini-ext prob} corresponding to $ f \in \mb{H}^s(\mb{R}^{n+1}) $ depends only on $ f \vert_{Q_e}$  where $Q_e= \Omega_e \times (-T, T)$ which can be seen as a consequence of uniqueness and the weak formulation in Definition \ref{wk}. To emphasize the dependence of the solution on the data, we declare $u_f$ and $ u^*_f $ to be the solutions of \eqref{ini-ext prob} and \eqref{adjoint problem for q} respectively for the exterior value $f$.  The following proposition below constitutes the rigorous definition of the DN map analogous to Proposition 3.5 in \cite{CLR}. This crucially relies on the well-posedness which is accounted by \eqref{asus}.
    
    \begin{proposition}[The DN map for $\mc{H}^s + q$]
    Let $ s \in (0,1), \ T>0 $ and $\Omega$ be a bounded open set in $ \mb{R}^n, n \ge 1 $ and let $Q=\Omega \times (-T, T)$. Further assume $ q \in L^\iy (Q) $ satisfies the eigenvalue condition \eqref{asus}. For $f, g \in \mc{H}^s(\mb{R}^{n} \times [-T,T])$, we define
    \begin{align*}
        \langle \Ld_q [f], [g] \rangle_{\mb{X}^* \times \mb{X}} = \mc{B}_q (u_f,g).  
    \end{align*}
    Then $ \Ld_q: \mb{X} \to \mb{X}^* $ is a bounded operator. 
    \end{proposition}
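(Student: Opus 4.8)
The plan is to verify that the pairing $\langle \Lambda_q[f],[g]\rangle := \mathcal{B}_q(u_f, g)$ is \emph{well-defined} on the quotient space $\mathbf{X} = \mathcal{H}^s(\mathbb{R}^n\times[-T,T])/\widetilde{\mathbf{H}}^s_{\overline{Q}}$ and then \emph{bounded}. First I would note that by Theorem \ref{well-posedness theorem for q} (with $F=0$, $\lambda = 0$, which is permissible since \eqref{asus} says $0\notin\Sigma$), for every $f\in\mathcal{H}^s(\mathbb{R}^n\times[-T,T])$ there is a unique $u_f\in\mathbf{H}^s(\mathbb{R}^{n+1})$ with $(u_f-\tilde f)_T\in\ch$ solving \eqref{ini-ext prob}, where $\tilde f$ is any $\mathbf H^s$-extension of $f$; and it satisfies the estimate $\|u_f\|_{\mathbf{H}^s(\mathbb{R}^{n+1})}\preceq \|f\|_{\mathcal{H}^s(Q_e)}\le \|f\|_{\mathcal{H}^s(\mathbb{R}^n\times[-T,T])}$. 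As remarked just above the statement, $u_f$ depends only on $f|_{Q_e}$ (equivalently only on the class $[f]\in\mathbf X$), by uniqueness in Definition \ref{wk}; this is what makes the left slot of the pairing descend to $\mathbf X$.

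Next I would check the right slot descends to $\mathbf X$. Since $u_f$ is a weak solution, $\mathcal{B}_q(u_f,\phi)=0$ for all $\phi\in\ch=\widetilde{\mathbf H}^s_{\overline Q}$. Hence if $g_1,g_2\in\mathcal H^s(\mathbb R^n\times[-T,T])$ have $[g_1]=[g_2]$ in $\mathbf X$, i.e.\ $g_1-g_2$ (extended) differs by an element of $\widetilde{\mathbf H}^s_{\overline Q}$, then $\mathcal B_q(u_f,g_1)-\mathcal B_q(u_f,g_2)=\mathcal B_q(u_f,g_1-g_2)=0$. So the value $\langle\Lambda_q[f],[g]\rangle$ is independent of the representative $g$, and by the previous paragraph also independent of the representative $f$; thus $\Lambda_q:\mathbf X\to\mathbf X^*$ is well-defined as a map of quotient spaces. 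Here I would also make explicit that for a representative $g$ one may, without changing the pairing, replace $g$ by $g_T=g\chi_{[-T,T]}$, which lies in $\mathbf H^s(\mathbb R^{n+1})$ because $\chi_{[-T,T]}$ is a multiplier on $H^\gamma(\mathbb R)$ for $|\gamma|\le \tfrac12$ (cited in the paper), so that the bilinear form $\mathcal B_q$ is legitimately evaluated.

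For boundedness I would simply invoke the continuity estimate \eqref{continuity}, $|\mathcal B_q(u_f,g)|\preceq \|u_f\|_{\mathbf H^s(\mathbb R^{n+1})}\|g\|_{\mathbf H^s(\mathbb R^{n+1})}$, combine it with the well-posedness bound $\|u_f\|_{\mathbf H^s(\mathbb R^{n+1})}\preceq\|f\|_{\mathcal H^s(\mathbb R^n\times[-T,T])}$, and then pass to the infimum over representatives: choosing the representative $g$ of $[g]$ (extended to $\mathbb R^{n+1}$) with $\|g\|_{\mathbf H^s(\mathbb R^{n+1})}$ within $\varepsilon$ of $\|[g]\|_{\mathbf X}$, and similarly noting $\|f\|_{\mathcal H^s}$ can be taken close to $\|[f]\|_{\mathbf X}$, yields $|\langle\Lambda_q[f],[g]\rangle|\preceq \|[f]\|_{\mathbf X}\|[g]\|_{\mathbf X}$. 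This gives $\|\Lambda_q[f]\|_{\mathbf X^*}\preceq\|[f]\|_{\mathbf X}$, i.e.\ $\Lambda_q$ is bounded.

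The only genuinely delicate point — the ``main obstacle'' — is the well-definedness in the $f$-slot: one must be careful that the quotient by $\widetilde{\mathbf H}^s_{\overline Q}$ is exactly compatible with the statement ``$u_f$ depends only on $f|_{Q_e}$''. Concretely, if $f_1,f_2$ are two extensions agreeing on $Q_e$, then $f_1-f_2\in\widetilde{\mathbf H}^s_{\overline Q}$, and one checks $u_{f_1}-u_{f_2}$ is the (unique, by \eqref{asus}) solution with zero exterior data and zero source, hence $u_{f_1}=u_{f_2}$; this is where \eqref{asus}/$0\notin\Sigma$ is essential and is the step I would write out with care. The rest is bookkeeping with the quotient norms and the already-established bilinear estimate \eqref{continuity}.
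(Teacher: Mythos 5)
Your proposal is correct and follows essentially the same route as the paper: well-definedness in the $g$-slot from $\mc{B}_q(u_f,\psi)=0$ for $\psi\in\ch$ (the weak formulation), in the $f$-slot from the fact that $u_f$ depends only on $f\vert_{Q_e}$ (uniqueness guaranteed by \eqref{asus}), and boundedness from \eqref{continuity} combined with the stability estimate of Theorem \ref{well-posedness theorem for q}, followed by taking the infimum over representatives. You merely spell out a few details the paper leaves implicit (the time-truncation of $g$ via the multiplier property and the explicit role of \eqref{asus}), so no further comparison is needed.
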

    \begin{proof} 
       First we need to justify well-definedness of $\Ld_q$. For that, we consider $f' \in [f]$ and $g' \in [g]$ or, in other words $ f' = f + \phi $ and $ g' = g + \psi $ for some $ \phi, \psi \in \ch$ and wish to show $\langle \Ld_q [f], [g] \rangle_{\mb{X}^* \times \mb{X}} = \langle \Ld_q [f'], [g'] \rangle_{\mb{X}^* \times \mb{X}}$. In this regard, we notice  
       \begin{align*}
           \mc{B}_q (u_{f+\phi},{g+\psi}) = \mc{B}_q (u_{f+\phi},g) = \mc{B}_q (u_f,g)
       \end{align*}
       where the above implications follow directly from the weak formulation of \eqref{ini-ext prob} and the fact that $u_{f}$ depends only on $f\vert_{Q_e}$. Moreover  from \eqref{continuity} we find
       \begin{align*}
           \lvert \langle \Ld_q [f], [g] \rangle_{\mb{X}^* \times \mb{X}} \rvert \le C \  \|f+\phi\|_{\mc{H}^s(\mb{R}^{n} \times [-T,T])} \ \|g+\psi\|_{\mc{H}^s(\mb{R}^{n} \times [-T,T])}
       \end{align*}
       where the constant $C>0$ is independent of the choices $\phi,\psi \in \ch$. This implies $ \Ld_q [f] \in \mb{X}^*$ with $\|\Ld_q\|_{\mb{X} \to \mb{X}^*} \le C$. 
    \end{proof}

    Following the natural pairing, we define the DN map for the adjoint problem \eqref{adjoint problem for q} as 
    \begin{align}\label{adjoint DN map for q}
       \langle [f], \Ld^*_q [g] \rangle_{\mb{X} \times \mb{X}^*} = \langle \Ld_q [f], [g]\rangle_{\mb{X}^* \times \mb{X}} .
    \end{align}
    We also note that if $u^*_g \in H^s(\mb{R}^{n+1})$ solves \eqref{adjoint problem for q} corresponding to the exterior data $g \in \mc{H}^s(Q_e)$ then we have 
    \begin{align}\label{ch of adjoint DN map for q}
        \langle [f], \Ld^*_q [g] \rangle_{\mb{X} \times \mb{X}^*} = \mc{B}_{q} (f,u^*_g)
    \end{align}
    which follows from the variational formulation of the problems \eqref{ini-ext prob} and \eqref{adjoint problem for q}.

    We now state and prove an Alessandrini type identity in our context which plays an essential role in proving the uniqueness result.

    \begin{lemma}[Integral identity for $\mc{H}^s + q$] \label{int identity for q}
    Let $ s \in (0,1), T>0 $ and $\Omega$ be a bounded  open set in $ \mb{R}^n$. Furthermore, let  $ q_1, q_2 \in L^\iy (Q) $ be such that the eigenvalue condition \eqref{asus} holds. Then for $ f, g \in \mc{H}^s(\mb{R}^{n} \times [-T,T]) $, we have
    \begin{align*}
        \langle \left( \Ld_{q_1} - \Ld_{q_2} \right)[f], [g] \rangle_{\mb{X}^* \times \mb{X}} = \int_{Q} (q_1 - q_2) u_f u^*_g.  
    \end{align*}
    where $u_f$ solves the problem \eqref{ini-ext prob} for $q=q_1$ associated to the exterior data $f$ and $u^*_g$ is a solution to \eqref{adjoint problem for q} when $q=q_2$ corresponding to  exterior data  $g$. 
    \end{lemma}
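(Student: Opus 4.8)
The plan is to compute the pairing $\langle(\Lambda_{q_1}-\Lambda_{q_2})[f],[g]\rangle$ by unwinding the two bilinear-form definitions of the DN maps and then exploiting the fact that the leading (nonlocal) parts coincide. Concretely, write $u_f$ for the solution of \eqref{ini-ext prob} with potential $q_1$ and exterior data $f$, and $u_g^*$ for the solution of the adjoint problem \eqref{adjoint problem for q} with potential $q_2$ and exterior data $g$. By definition of the DN map, $\langle\Lambda_{q_1}[f],[g]\rangle = \mathcal B_{q_1}(u_f,g)$, and by the adjoint relation \eqref{adjoint DN map for q} together with its characterization \eqref{ch of adjoint DN map for q}, $\langle\Lambda_{q_2}[f],[g]\rangle = \langle[f],\Lambda_{q_2}^*[g]\rangle = \mathcal B_{q_2}(f,u_g^*)$ where $u_g^*$ solves the $q_2$-adjoint problem. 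Hence
\begin{align*}
\langle(\Lambda_{q_1}-\Lambda_{q_2})[f],[g]\rangle = \mathcal B_{q_1}(u_f,g) - \mathcal B_{q_2}(f,u_g^*).
\end{align*}

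The next step is to insert the right test functions into the two weak formulations. Since $u_g^*$ is admissible as a test function for the $q_1$-problem solved by $u_f$ — one must first check that $(u_g^*)_T$ or the appropriate truncation lies in $\mathbf H^s_{\overline Q}$, using that $u_g^* - g$ is supported in $\overline Q$ and that $\chi_{[-T,T]}$ is a Sobolev multiplier as noted after \eqref{tct} — the weak formulation of \eqref{ini-ext prob} gives $\mathcal B_{q_1}(u_f,\phi)=0$ for all $\phi\in\mathbf H^s_{\overline Q}$, and symmetrically $\mathcal B_{q_2}^{\mathrm{adj}}(u_g^*,\psi)=0$. The key algebraic point is that the sesquilinear part $\langle\mathcal H^{s/2}\cdot,\mathcal H_*^{s/2}\cdot\rangle$ is the same for $\mathcal B_{q_1}$ and $\mathcal B_{q_2}$, so
\begin{align*}
\mathcal B_{q_1}(u_f,g) &= \mathcal B_{q_1}(u_f,g) - \mathcal B_{q_1}(u_f, u_g^*) + \langle\mathcal H^{s/2}u_f,\mathcal H_*^{s/2}u_g^*\rangle + \int_Q q_1 u_f u_g^*,\\
\mathcal B_{q_2}(f,u_g^*) &= \mathcal B_{q_2}(f,u_g^*) - \mathcal B_{q_2}(u_f,u_g^*) + \langle\mathcal H^{s/2}u_f,\mathcal H_*^{s/2}u_g^*\rangle + \int_Q q_2 u_f u_g^*,
\end{align*}
where we used $\mathcal B_{q_1}(u_f,u_g^*)=\mathcal B_{q_1}(u_f,g)$ (because $u_g^*-g\in\mathbf H^s_{\overline Q}$ and $u_f$ is a $q_1$-solution) and similarly $\mathcal B_{q_2}^{\mathrm{adj}}(u_g^*,u_f)=\mathcal B_{q_2}^{\mathrm{adj}}(u_g^*,f)$. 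Subtracting, the identical nonlocal terms cancel and one is left with $\int_Q(q_1-q_2)u_f u_g^*$.

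The main obstacle — really the only delicate point — is the bookkeeping around the time truncation: the bilinear forms are genuinely defined on the truncated functions $(\cdot)_T$, the adjoint operator $\mathcal H_*^s$ interacts with $\chi_{[-T,T]}$ via the reflection identity \eqref{k}, and one must verify that plugging $u_g^*$ into the $q_1$-weak formulation (and $u_f$ into the $q_2$-adjoint weak formulation) is legitimate, i.e.\ that the relevant truncated differences lie in $\mathbf H^s_{\overline Q}$ and that the distributional pairings $\langle(\mathcal H^s+q_1)f,\cdot\rangle$ match up. This is handled exactly as in \cite[Lemma 3.8]{CLR}: use that $u_g^*$ vanishes for $t\ge T$ and $u_f$ vanishes for $t\le -T$, so the product $u_f u_g^*$ and all pairings localize to $Q$, and the support/multiplier facts recorded after \eqref{tct} make every step rigorous. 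Once this is in place the cancellation is purely formal and the identity follows.
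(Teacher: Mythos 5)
Your proposal is correct and follows essentially the same route as the paper: express the pairing via $\Ld_{q_1}$ and the adjoint characterization \eqref{adjoint DN map for q}--\eqref{ch of adjoint DN map for q}, use the weak formulations (equivalently, well-definedness on the classes $[f],[g]$, since $u_f\in[f]$ and $u^*_g\in[g]$) to evaluate both pairings as $\mc{B}_{q_1}(u_f,u^*_g)$ and $\mc{B}_{q_2}(u_f,u^*_g)$, and let the common term $\langle \mc{H}^{s/2}u_f,\mc{H}_*^{s/2}u^*_g\rangle$ cancel. The paper's proof is just a terser version of this same computation, with the truncation/support bookkeeping you flag left implicit.
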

    \begin{proof}
     From the adjoint property  \eqref{adjoint DN map for q}, its characterization in \eqref{ch of adjoint DN map for q} and  also by using the  fact that $u_f \in [f], u^*_g \in [g]$, we find
     \begin{align*}
         \langle \left( \Ld_{q_1} - \Ld_{q_2} \right)[f], [g] \rangle_{\mb{X}^* \times \mb{X}} & = \langle \Ld_{q_1} [f], [g] \rangle_{\mb{X}^* \times \mb{X}} - \langle  [f], \Ld_q^* [g] \rangle_{\mb{X} \times \mb{X}^*} \\
         & = \mc{B}_{q_1} (u_f, u^*_g) - \mc{B}_{q_2} (u_f, u^*_g) \\
         & = \int_Q (q_1-q_2)  u_f u^*_g.
     \end{align*}
    \end{proof}
    
    The final step in the uniqueness proof is  the following density result where we crucially use the weak unique continuation result Theorem \ref{GUCP}. We remark here that the unique determination result \eqref{determining q} only requires the density with respect to $L^2$ norm. But we  eventually need the approximation result  in $\ch$ for recovering both
 	the first and zeroth order perturbations in Theorem \ref{determining b,q}. With an intent to omit a similar discussion for the $(b,q)$ case, we present a general approximation result (in $\ch$ topology) for the case when $b=0$.
    
    \begin{theorem}[Runge approximation for $\mc{H}^s + q$] \label{Runge for q}
     Let $ s \in (0,1), T>0 $ and $\Omega$ be a bounded  open set in $ \mb{R}^n $. Consider ${W}$ to be a bounded open set in $ \mb{R}^{n} $, such that $ \overline{\Omega} \cap \overline{{W}} = \emptyset $. Then the set 
     \begin{align*}
         \mc{D}_{q} ({W}) = \{ u_f - f ; \qd f \in C_0^{\iy} \left({W} \times (-T,T)\right)\}
     \end{align*}
     is dense in $ \ch $ where $Q=\Omega \times (-T, T)$ and $u_f$ is the solution to \eqref{ini-ext prob} corresponding to $f$.
    \end{theorem}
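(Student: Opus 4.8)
The plan is to prove this density result by the standard Hahn--Banach plus unique continuation argument, following the approach of \cite{GSU} and \cite{CLR}. Since $\ch$ is a Hilbert space (being a closed subspace of $\mb{H}^s(\mb{R}^{n+1})$), it suffices to show that any $v \in (\ch)^*$ that annihilates $\mc{D}_q(W)$ must be zero. So first I would fix such a functional $v$, identified with an element of $\chd$ supported in $\overline{Q}$, satisfying $\langle v, u_f - f \rangle = 0$ for all $f \in C_0^\infty(W \times (-T,T))$.

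The key step is to introduce the adjoint solution. For the functional $v \in \chd$, let $w \in \mb{H}^s(\mb{R}^{n+1})$ be the solution of the adjoint (future-exterior) problem $(\mc{H}^s_* + q) w = v$ in $Q$, with $w = 0$ in $Q_e$ and $w = 0$ for $t \ge T$; this is well-posed by the analogue of Theorem \ref{well-posedness theorem for q} for the adjoint equation, together with \eqref{asus1}. Then I would compute, using the weak formulations of both problems and the fact that $u_f$ solves \eqref{ini-ext prob} with exterior data $f$,
\begin{align*}
0 = \langle v, u_f - f \rangle = \mc{B}_q(u_f - f, w)\ \text{suitably interpreted} = -\langle (\mc{H}^s + q) f, w\rangle = -\langle f, (\mc{H}^s_* + q) w \rangle_{Q_e} - \langle \mc{H}^s_* w, f\rangle_{Q_e},
\end{align*}
and since $f$ is supported in $W \times (-T,T) \subset Q_e$ where $w$ vanishes and $(\mc{H}^s_* + q)w = 0$, the bulk terms drop and one is left with $\langle \mc{H}^s_* w, f \rangle = 0$ for all $f \in C_0^\infty(W \times (-T,T))$. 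Hence $\mc{H}^s_* w = 0$ in $W \times (-T,T)$. Combined with $w = 0$ in $W \times (-T,T)$ (since $W \subset \Omega_e$), this gives both $w = 0$ and $\mc{H}^s_* w = 0$ on an exterior cylinder.

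The final step applies the global weak unique continuation property. Using the reflection identity \eqref{k} — namely that $\tilde w(x,t) := w(x,-t)$ satisfies $\mc{H}^s \tilde w(x,t) = (\mc{H}^s_* w)(x,-t)$ — the vanishing of $w$ and $\mc{H}^s_* w$ on $W \times (-T,T)$ translates into $\tilde w = \mc{H}^s \tilde w = 0$ on $W \times (-T,T)$, so Theorem \ref{GUCP} forces $\tilde w \equiv 0$ on $\mb{R}^n \times (-T,T)$, i.e. $w \equiv 0$ on $\mb{R}^n \times (-T,T)$. Then $v = (\mc{H}^s_* + q) w = 0$ in $Q$, and since $v$ is supported in $\overline{Q}$ this means $v = 0$ as an element of $\chd$. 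By Hahn--Banach, $\mc{D}_q(W)$ is dense in $\ch$. The main obstacle is the bookkeeping in the integration-by-parts / duality step: one must carefully justify the pairing manipulations for functions only in $\mb{H}^s$ (not Schwartz), using the self-adjointness relations $\langle \mc{H}^s f, g\rangle = \langle \mc{H}^{s/2} f, \mc{H}^{s/2}_* g\rangle = \langle f, \mc{H}^s_* g\rangle$ from \eqref{mapping}, the time-localization via $u_T$ and the multiplier property of $\chi_{[-T,T]}$, and the fact (noted after \eqref{tct}) that $\mc{H}^s u(x,t) = \mc{H}^s(\chi_{(-\infty,T]}u)(x,t)$ for $t \le T$; the unique continuation input itself is then immediate from Theorem \ref{GUCP}.
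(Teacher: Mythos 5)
Your proposal is correct and follows essentially the same route as the paper's proof: Hahn--Banach reduction, solving the adjoint future-exterior problem with the annihilating functional as source, using the weak formulations to conclude that both the adjoint solution and $\mc{H}^s_*$ of it vanish on $W \times (-T,T)$, and then invoking Theorem \ref{GUCP} via the time-reversal identity \eqref{k}. The only cosmetic difference is that your intermediate duality computation is written a bit loosely, but the paper's argument (via $\mc{B}_q(u_f,\phi)=0$ and then $\mc{B}_q(f,\phi)=0$) is exactly the clean version of what you describe.
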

    \begin{proof}
    The proof  is similar to that in \cite{GSU} given the validity of Theorem \ref{GUCP}.  Invoking  the Hahn-Banach theorem, it suffices to show that  there is no non-trivial $ F\in \chd$ which satisfies 
    \begin{align}\label{runge}
        \langle F, u_f - f \rangle = 0, \qd \fa f \in C_0^{\iy} \left({W} \times (-T,T)\right).
    \end{align}
    In order to establish \eqref{runge}, we first construct $\phi\in \ch$ solving the adjoint problem
    \begin{align*} 
 	\begin{cases}
 	   \left( \mc{H}^s_* + q(x,t) \right) \phi = F, & \tn{ in } Q \\
 	   \phi(x,t) = 0, & \tn{ in } Q_e, \tn{ and for}\ t \ge T.
 	\end{cases}
 	\end{align*}
 	Then the weak formulation \eqref{adjoint problem for q}  together with  \eqref{runge} implies
 	\begin{align}\label{approximation}
 	    0 = \langle F, u_f - f \rangle= \mc{B}_q \left(  u_f - f, \phi \right).
 	\end{align}
	Now using the weak formulation for $u_f$, we find that
	\begin{equation}\label{ao1}
	\mc{B}_q\left( u_f, \phi \right)=0,
	\end{equation}
	since $\phi \equiv 0$ in $Q_e$.
	Therefore it follows that
	\begin{equation}\label{ao2}
	\mc{B}_q \left(f, \phi\right)=0
	\end{equation}
	for all $f \in C_{0}^{\infty}(W \times (-T, T))$.
 	Since $f$ is supported in $W \times (-T, T)$, thus from \eqref{ao2} we deduce that 
 	\begin{align}\label{applying GUCP}
 	    \mc{H}^s_* \phi = 0, \ \phi = 0, \qd \tn{ in } W \times (-T,T).
 	\end{align}
 	Now in view of the change of variable in \eqref{k}, we can  invoke Theorem \ref{GUCP} to conclude that  $\phi = 0$ in $ \mb{R}^{n+1} $ which then implies that $ F=0$. This finishes the proof of the Theorem.
    \end{proof}
    
    With the Runge type approximation result as in Theorem \ref{Runge for q} in hand, we now proceed with the proof of Theorem \ref{determining q}.
    
   \begin{proof}[Proof of Theorem \ref{determining q}] Let us fix some $\phi \in C_0^\iy(Q)$ and also let $W_1$ and $W_2$ be as in Theorem \ref{determining q}. We also let $\psi \in C_0^{\infty}(Q)$ such that $\psi \equiv 1$ on $\text{supp}(\phi)$. By virtue of Theorem \ref{Runge for q},  for $k=1,2$ and $j \in \mb{N}$,  there exists exterior values (for both the forward and adjoint problems) $f_{j,k} \in C^\iy_0(W_k \times (-T,T))$ for which
    \begin{align*}
       & \left( \mc{H}^s + q_1\right)u_{j,1} = \left( \mc{H}_{*}^s + q_2\right)u^{*}_{j,2} = 0, \qd \tn{ in } Q, \\
       & u_{j,1} (x,t) = f_{j,1}(x,t), \tn{ and } u^{*}_{j,2} (x,t) = f_{j,2}(x,t), \tn{ in } Q_e, \\
       & u_{j,1}\vert_{t\leq-T} = 0 , \tn{ and } u^*_{j,2}\vert_{t \geq T} = 0,\\
       &u_{j,1} - f_{j,1} = \phi + r_{j,1},\  u^{*}_{j,2} - f_{j,2} = \psi + r_{j,2},    \end{align*}
  such that \begin{equation}\label{zero} \|r_{j,k}\|_{\ch} \to 0\ \text{as $j \to \iy$ for $k = 1,2$}.\end{equation} Plugging these solutions into the Alessandrini type identity in  Lemma \ref{int identity for q}  and by using \\ $\left. \Lambda_{q_1}([f_{j,1}])\right|_{W_2 \times (-T, T)} = \left. \Lambda_{q_2} ([f_{j,1}]) \right|_{W_2 \times (-T,T)}$ and also that $f_{j,2} \in C_0^{\infty}(W_2 \times (-T, T))$, we obtain 
    \begin{align}\label{applying int-ineq for q}
        \int_Q (q_1-q_2) u_{j,1} u^{*}_{j,2} \ \d{x} \d{t} = 0, \qd \tn{ for } j\in\mb{N}. 
    \end{align}
    Now since $f_{j,k} \equiv 0$ in $Q$, by  letting $j \to \iy$ and by  using \eqref{zero}, we find that   \eqref{applying int-ineq for q} reduces to 
    \begin{align*}
        \int_Q (q_1-q_2) \phi \ \d{x} \d{t} = 0.
    \end{align*}
    Since this is valid for any $\phi \in C^\iy_0(Q)$, we deduce that  $ q_1 = q_2 $ in $Q$.  This finishes the proof of the theorem. \end{proof}

   Now we prove Theorem \ref{determining b,q}. The rigorous definition of the DN map and derivation of related integral identity along with Runge approximation result are exactly similar to the ones discussed for the $q$ case. For this reason, we choose to skip the details and merely mention the statements in this setting. The only part different from the previous discussion is the determination of $b$ and $q$ simultaneously. To accomplish that, we follow the strategy in \cite{CLR} by determining  $q$ first and then use it to recover the drift term $b$.    Throughout we assume that \eqref{kju2} holds.
    
    \begin{proposition}[The DN map for $\mc{H}^s + \langle b, \nabla_{x} \rangle + q$]
    Let $ s \in \left( \f{1}{2}, 1 \right), T>0 $ and $\Omega$ be a bounded Lipschitz open set in $ \mb{R}^n$. Further assume that $ b \in L^\iy \left((-T,T);W^{1-s,\iy}(\Omega)\right), \ q \in L^\iy (Q) $. For $f, g \in \mb{H}^s(\mb{R}^{n} \times [-T,T])$, we define
    \begin{align*}
        \langle \Ld_{b,q} [f], [g] \rangle_{\mb{X}^* \times \mb{X}} = \mc{B}_{b,q} (u_f,g).  
    \end{align*}
    Then $ \Ld_{b,q}: \mb{X} \to \mb{X}^* $ is well defined and is  a bounded operator. 
    \end{proposition}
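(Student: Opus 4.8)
The plan is to transcribe, essentially line by line, the argument used for the earlier Proposition ``The DN map for $\mc{H}^s+q$'', replacing $\mc{B}_q$ by $\mc{B}_{b,q}$ and the well-posedness input by Theorem \ref{well-posedness theorem for b,q}. Two things have to be checked: that the pairing $\langle \Ld_{b,q}[f],[g]\rangle_{\mb{X}^*\times\mb{X}}=\mc{B}_{b,q}(u_f,g)$ does not depend on the representatives chosen in the classes $[f],[g]\in\mb{X}$, and that the resulting linear map $\mb{X}\to\mb{X}^*$ is bounded.

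For well-definedness I would argue as follows. Let $f'=f+\phi$ and $g'=g+\psi$ with $\phi,\psi\in\ch$. Since $\overline{Q}\cap Q_e=\emptyset$, functions in $\ch$ vanish on $Q_e$, so $f'|_{Q_e}=f|_{Q_e}$; because, by uniqueness in Theorem \ref{well-posedness theorem for b,q} together with the weak formulation, the solution of \eqref{ini-ext prob for b,q} depends only on the exterior value, we get $u_{f'}=u_f$. Moreover the weak formulation for $u_f$ (with vanishing source) gives $\mc{B}_{b,q}(u_f,\psi)=0$ for every $\psi\in\ch$. Hence $\mc{B}_{b,q}(u_{f'},g')=\mc{B}_{b,q}(u_f,g)+\mc{B}_{b,q}(u_f,\psi)=\mc{B}_{b,q}(u_f,g)$, which is precisely what is needed.

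For boundedness the key input is the continuity estimate for $\mc{B}_{b,q}$ already established inside the proof of Theorem \ref{well-posedness theorem for b,q}, namely $|\mc{B}_{b,q}(u,v)|\preceq\|u\|_{\mb{H}^s(\mb{R}^{n+1})}\|v\|_{\mb{H}^s(\mb{R}^{n+1})}$, whose only nontrivial ingredient is the Kato--Ponce type bound \eqref{Kato-Ponce} for the drift contribution $\int_Q u\,\langle b,\nabla_x v\rangle$ --- this is where $s>\f{1}{2}$, $\Omega$ Lipschitz, and $b\in L^\iy((-T,T);W^{1-s,\iy}(\Omega))$ enter, via the choice of an extension $B$ of $b$ and estimate \eqref{bdo}. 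Combining this with the a priori bound $\|u_f\|_{\mb{H}^s(\mb{R}^{n+1})}\preceq\|f\|_{\mc{H}^s(Q_e)}$ of Theorem \ref{well-posedness theorem for b,q}, and with the elementary inequality $\|f\|_{\mc{H}^s(Q_e)}\preceq\|[f]\|_{\mb{X}}$ (valid because restriction to $Q_e$ annihilates $\ch$ and hence factors through the quotient $\mb{X}$), one obtains, for an arbitrary representative $g'\in[g]$, $|\mc{B}_{b,q}(u_f,g')|\le C\|[f]\|_{\mb{X}}\|g'\|_{\mc{H}^s(\mb{R}^{n}\times[-T,T])}$ with $C$ independent of $g'$; taking the infimum over $g'\in[g]$ yields $|\langle\Ld_{b,q}[f],[g]\rangle_{\mb{X}^*\times\mb{X}}|\le C\|[f]\|_{\mb{X}}\|[g]\|_{\mb{X}}$, so $\Ld_{b,q}[f]\in\mb{X}^*$ and $\|\Ld_{b,q}\|_{\mb{X}\to\mb{X}^*}\le C$.

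I do not expect a genuine obstacle here: the only feature absent from the $q$-case is the first-order term, and its mapping behaviour is already packaged in \eqref{bdo}--\eqref{Kato-Ponce}. The one point to be mildly careful about is the bookkeeping of the two quotient norms (on $\mb{X}$ and on the exterior trace space $\mc{H}^s(Q_e)$) and the verification of their compatibility, exactly as in the Proposition for $\mc{H}^s+q$.
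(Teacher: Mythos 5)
Your proposal is correct and matches the paper's intent exactly: the paper explicitly omits the proof, stating it is "exactly similar" to the $q$-case proposition, and your argument is precisely that adaptation, with well-definedness via the weak formulation and dependence of $u_f$ on $f\vert_{Q_e}$ only, and boundedness via the continuity of $\mc{B}_{b,q}$ (the Kato--Ponce estimate \eqref{bdo}--\eqref{Kato-Ponce}, where $s>\f{1}{2}$ and the regularity of $b$ enter) together with the a priori bound from Theorem \ref{well-posedness theorem for b,q}. No gaps to report.
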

    \begin{lemma}[Integral identity for $\mc{H}^s + \langle b, \nabla_{x}  \rangle + q$] \label{int identity for b and q}
    Let $ s \in \left(\f{1}{2}, 1\right), T>0 $ and $\Omega$ be a bounded Lipschitz open set in $ \mb{R}^n$. For $ q_1, q_2 \in L^\iy (Q)$,  $ b_1, b_2 \in L^\iy \left((-T,T);W^{1-s,\iy}(\Omega)\right)$  and $ f, g \in \mb H^s(\mb{R}^{n} \times [-T,T]) $, we have
    \begin{align*}
        \langle \left( \Ld_{b_1,q_1} - \Ld_{b_2,q_2} \right)[f], [g] \rangle_{\mb{X}^* \times \mb{X}} = \int_{Q} \langle (b_1 - b_2), \nabla_{x}u_{f} \rangle \ u^*_g + \int_{Q} (q_1 - q_2) u_f u^*_g.  
    \end{align*}
    \end{lemma}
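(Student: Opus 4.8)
The plan is to follow the proof of Lemma \ref{int identity for q} essentially verbatim, the only genuinely new point being the bookkeeping attached to the first order term and to the time localization. As a preliminary step one introduces the adjoint DN map $\Ld_{b,q}^{*}$ associated to the adjoint initial-exterior problem
\[
\mc{H}_{*}^{s} u^{*} - \nabla_{x}\cdot(b\,u^{*}) + q\,u^{*} = 0 \ \tn{ in } Q, \qquad u^{*} = g \ \tn{ in } Q_{e}, \qquad u^{*} = 0 \ \tn{ for } t \ge T,
\]
by the duality relation $\langle [f], \Ld_{b,q}^{*}[g]\rangle_{\mb{X}\times\mb{X}^{*}} = \langle \Ld_{b,q}[f],[g]\rangle_{\mb{X}^{*}\times\mb{X}}$, exactly as in \eqref{adjoint DN map for q}. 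One then records its characterization $\langle [f], \Ld_{b,q}^{*}[g]\rangle = \mc{B}_{b,q}(f,u_{g}^{*})$ for $u_{g}^{*}$ the adjoint solution with exterior data $g$, which follows from unwinding definitions and using that $u_{g}^{*}$ is a weak solution of the adjoint problem, i.e. $\mc{B}_{b,q}(\phi,u_{g}^{*}) = 0$ for all $\phi\in\ch$; here one uses $\int_{Q}\phi\,(-\nabla_{x}\cdot(b\,u^{*})) = \int_{Q}\langle b,\nabla_{x}\phi\rangle u^{*}$, which is legitimate because $b$ is supported in $\Omega$ and $\phi$ is an admissible test function. Well-definedness of $\Ld_{b,q}$ and $\Ld_{b,q}^{*}$, together with their boundedness $\mb{X}\to\mb{X}^{*}$, is proven as in the $q$-only case, with \eqref{Kato-Ponce} (hence the standing hypothesis $s>\tfrac12$ and the regularity $b\in L^{\iy}((-T,T);W^{1-s,\iy}(\Omega))$) replacing \eqref{continuity}.

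Granting this, the computation is immediate. Let $u_{f}$ solve \eqref{ini-ext prob for b,q} with coefficients $(b_{1},q_{1})$ and exterior data $f$, and let $u_{g}^{*}$ solve the adjoint problem with coefficients $(b_{2},q_{2})$ and exterior data $g$. Since $u_{f}\in[f]$ and $u_{g}^{*}\in[g]$ in $\mb{X}$, and since $\mc{B}_{b_{1},q_{1}}(u_{f},\cdot)$ annihilates $\ch$ (because $u_{f}$ solves the forward problem) while $\mc{B}_{b_{2},q_{2}}(\cdot,u_{g}^{*})$ annihilates $\ch$ (because $u_{g}^{*}$ solves the adjoint problem), we obtain
\begin{align*}
\langle \Ld_{b_{1},q_{1}}[f],[g]\rangle_{\mb{X}^{*}\times\mb{X}} &= \mc{B}_{b_{1},q_{1}}(u_{f},u_{g}^{*}), \\
\langle [f], \Ld_{b_{2},q_{2}}^{*}[g]\rangle_{\mb{X}\times\mb{X}^{*}} &= \mc{B}_{b_{2},q_{2}}(u_{f},u_{g}^{*}).
\end{align*}
Subtracting and invoking the adjoint relation $\langle \Ld_{b_{2},q_{2}}[f],[g]\rangle = \langle [f],\Ld_{b_{2},q_{2}}^{*}[g]\rangle$, the leading terms $\langle\mc{H}^{s/2}u_{f},\mc{H}_{*}^{s/2}u_{g}^{*}\rangle$ cancel and what survives is
\[
\langle(\Ld_{b_{1},q_{1}}-\Ld_{b_{2},q_{2}})[f],[g]\rangle_{\mb{X}^{*}\times\mb{X}} = \int_{Q}\langle (b_{1}-b_{2}),\nabla_{x}u_{f}\rangle\,u_{g}^{*} + \int_{Q}(q_{1}-q_{2})\,u_{f}\,u_{g}^{*},
\]
which is the asserted identity.

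The step requiring the most care — and the one inherited from \cite{CLR} rather than created by the drift term — is the time localization implicit in the weak formulation: the bilinear identities above are really statements about the truncations $(u_{f})_{T}$ and $(u_{g}^{*})_{T}$, since $\mc{H}^{s}u(x,t)$ depends only on $u|_{\{t\le T\}}$ and $\mc{H}_{*}^{s}u(x,t)$ only on $u|_{\{t\ge -T\}}$ in the relevant range. One therefore uses the time-reversal identity \eqref{k} to pass between the forward and adjoint problems and the fact that $\chi_{[-T,T]}$ is a multiplier on $H^{\gamma}(\mb{R})$ for $|\gamma|\le\tfrac12$ to ensure the truncated functions remain in $\mb{H}^{s}(\mb{R}^{n+1})$ and that pairing against elements of $\ch$ is unaffected by truncation. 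Beyond this, the argument is the verbatim adaptation of the $b=0$ case; the extra term $\langle b,\nabla_{x}\cdot\rangle$ introduces no new analytic obstacle once \eqref{Kato-Ponce} is available.
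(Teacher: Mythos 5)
Your proposal is correct and follows exactly the route the paper intends: the paper itself omits the proof, stating that the definition of $\Ld_{b,q}$, its adjoint, and the integral identity are "exactly similar" to the $q$-only case, and your argument is precisely that adaptation — boundedness of $\mc{B}_{b,q}$ via the Kato--Ponce estimate \eqref{Kato-Ponce} (using $s>\tfrac12$), the adjoint DN map defined by duality with characterization through $u^{*}_{g}$, and then the subtraction $\mc{B}_{b_1,q_1}(u_f,u^*_g)-\mc{B}_{b_2,q_2}(u_f,u^*_g)$ in which the principal terms cancel, leaving the drift and potential differences. Your remarks on the time truncation and on the weak formulation of the adjoint drift term are consistent with the paper's setup and introduce no gap.
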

    \begin{theorem}[Runge approximation for $\mc{H}^s + \langle b,\nabla_{x} \rangle + q$]\label{Runge for b and q}
     Let $ s \in \left(\f{1}{2}, 1\right), T>0 $ and $\Omega$ be a bounded Lipschitz open set in $ \mb{R}^n$ and  $W$  be a bounded open set in $ \mb{R}^{n} $, such that $ \overline{\Omega} \cap \overline{{W}} = \emptyset.$ Then the set 
     \begin{align*}
         \mc{D}_{b,q} ({W}) = \{ u_f - f ; \ f \in C_0^{\iy} \left({W} \times (-T,T)\right)\}
     \end{align*}
     is dense in $ \ch $.
    \end{theorem}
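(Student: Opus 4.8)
The plan is to mimic, essentially verbatim, the proof of Theorem~\ref{Runge for q}, but now working with the adjoint of the first-order operator $\mc{H}^s + \langle b, \nabla_x\rangle + q$ and invoking the mapping/boundedness facts established in Theorem~\ref{well-posedness theorem for b,q} and in the estimate \eqref{bdo}. Since the only genuinely new feature compared with the $q$-case is the presence of the drift term $\langle b, \nabla_x\rangle$, and since the weak unique continuation result Theorem~\ref{GUCP} concerns only $\mc{H}^s$ (and is insensitive to the lower order terms), the structure of the argument carries over without change.

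\begin{proof}[Proof of Theorem \ref{Runge for b and q}]
By the Hahn--Banach theorem it suffices to show that if $F \in \chd$ satisfies
\begin{align*}
\langle F, u_f - f\rangle = 0, \qquad \forall\, f \in C_0^\iy\left(W \times (-T,T)\right),
\end{align*}
then $F = 0$. As in the proof of Theorem~\ref{Runge for q}, using the well-posedness result Theorem~\ref{well-posedness theorem for b,q} applied to the adjoint equation (with the eigenvalue condition \eqref{kju2}, which by the Fredholm alternative also holds for the adjoint problem), we construct $\phi \in \ch$ solving the adjoint future-exterior problem
\begin{align*}
\begin{cases}
\mc{H}_*^s \phi - \nabla_x \cdot (b\phi) + q(x,t)\phi = F, & \tn{ in } Q,\\
\phi(x,t) = 0, & \tn{ in } Q_e \tn{ and for } t \ge T.
\end{cases}
\end{align*}
Here the boundedness of the term $\int_Q \langle b, \nabla_x \cdot\rangle$ on $\mb{H}^s \times \mb{H}^s$ established in \eqref{Kato-Ponce} (using the Kato--Ponce inequality and $s > \tfrac12$) guarantees that the associated bilinear form $\mc{B}_{b,q}$ is bounded and, after adding $\mu\int_Q |w|^2$ for $\mu$ large, coercive, so the construction goes through.
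Then the weak formulations of the forward problem for $u_f$ and of the adjoint problem for $\phi$ give
\begin{align*}
0 = \langle F, u_f - f\rangle = \mc{B}_{b,q}\left(u_f - f, \phi\right) = -\mc{B}_{b,q}(f, \phi),
\end{align*}
where we used $\mc{B}_{b,q}(u_f,\phi) = 0$ since $\phi \equiv 0$ in $Q_e$. Hence $\mc{B}_{b,q}(f,\phi) = 0$ for all $f \in C_0^\iy(W \times (-T,T))$, and since $\text{supp}(f) \subset W \times (-T,T)$ with $\overline{\Omega} \cap \overline{W} = \emptyset$, the drift and potential terms do not see $f$, so we deduce
\begin{align*}
\mc{H}_*^s \phi = 0, \quad \phi = 0, \qquad \tn{ in } W \times (-T,T).
\end{align*}
Finally, using the change of variable \eqref{k} which turns $\mc{H}_*^s$ into $\mc{H}^s$ up to time reversal, we invoke Theorem~\ref{GUCP} to conclude $\phi = 0$ in $\mb{R}^{n+1}$, whence $F = 0$. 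This proves the density of $\mc{D}_{b,q}(W)$ in $\ch$.
\end{proof}

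The one point that deserves a little care—and which I expect to be the main (mild) obstacle—is verifying that the adjoint problem above is well-posed in exactly the sense required: Theorem~\ref{well-posedness theorem for b,q} is stated for the forward operator with a source $F \in \chd$, so one needs to observe that the adjoint operator $\mc{H}_*^s - \nabla_x\cdot(b\,\cdot) + q$ satisfies the same boundedness and Gårding-type coercivity as $\mc{B}_{b,q}$ (this follows by essentially the same computation, using \eqref{bdo} and Young's inequality to absorb the drift term, plus \eqref{k} to handle $\mc{H}_*^s$), and that the Fredholm alternative then transfers the non-eigenvalue condition \eqref{kju2} to the adjoint problem, exactly as \eqref{asus1} does in the $q$-only case. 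Once this is in place, the rest is a line-by-line transcription of the proof of Theorem~\ref{Runge for q}, with the key input—that lower order terms supported in $\overline{\Omega}$ are invisible to test functions supported in $\overline{W}$, together with the unique continuation statement Theorem~\ref{GUCP}—being entirely unchanged.
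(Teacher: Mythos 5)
Your proposal is correct and matches the paper's intended argument: the paper omits the proof precisely because it is, as you carried out, a line-by-line transcription of the proof of Theorem \ref{Runge for q} (Hahn--Banach, solving the adjoint drift problem via the well-posedness theory with the Fredholm alternative transferring \eqref{kju2} to the adjoint, noting that the drift and potential terms vanish against test functions supported in $W\times(-T,T)$, and concluding with the time reversal \eqref{k} and Theorem \ref{GUCP}). Your side remark on verifying boundedness and coercivity for the adjoint bilinear form is exactly the point the paper handles implicitly in the sentence following Theorem \ref{well-posedness theorem for b,q}, so no gap remains.
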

    \begin{proof}[Proof of Theorem \ref{determining b,q}]
   Let $\phi, \psi \in C_0^{\infty}(Q)$ be such that $\psi \equiv 1$ in $\text{supp}(\phi)$. Thanks to Theorem \ref{Runge for b and q}, for $k=1,2$ and $j \in \mathbb N$,  we can choose the exterior values $f_{j,k} \in C^\iy_0(W_k \times (-T,T))$ for the forward and adjoint problems in a way so that 
    \begin{align*}
        & \left( \mc{H}^s +\langle b_1, \nabla_{x} \rangle + q_1\right)u_{j,1} = \left( \mc{H}_{*}^s + \langle b_2, \nabla_{x} \rangle + q_2\right)u^*_{j,2} = 0, \qd \tn{ in } Q, \\
       & u_{j,1} (x,t) = f_{j,1}(x,t), \tn{ and } u^*_{j,2} (x,t) = f_{j,2}(x,t), \tn{ in } Q_e, \\
       & u_{j,1}\vert_{t\leq-T} = 0 \tn{ and } u^*_{j,2}\vert_{t\geq T} = 0,
    \end{align*}
    where 
    \begin{align}\label{uy}
     &u_{j,1} - f_{j,1} = \psi + r_{j,1}\ \text{and}\  u^*_{j,2} - f_{j,2} = \phi + r_{j,2},\end{align} with $r^j_k \in \ch$ and \begin{equation}\label{r00}\text{$ \|r_{j,k}\|_{\ch} \to 0 $ as $j \to \iy$ for $k = 1,2$.}\end{equation} Proceeding as before, using that $\left. \Lambda_{b_1,q_1}[f_{ji}]\right|_{W_2 \times (-T,T)} = \left. \Lambda_{b_2,q_2}[f_{j1}]\right|_{W_2 \times (-T,T)}$, we find from the integral identity Lemma \ref{int identity for b and q} that the following holds
    \begin{align}\label{int-ineq for determining b,q}
        \int_{Q} \langle(b_1 - b_2), \nabla_{x} u_{j,1} \rangle \ u^*_{j,2} \ \d{x} \d{t} +  \int_{Q}  (q_1 - q_2) u_{j,1} u^*_{j,2} \ \d{x} \d{t} = 0.
    \end{align}
     Now we analyze the term $ \int_{Q}\langle (b_1 - b_2),\nabla_{x} u_{j,1} \rangle \ u^*_{j,2} \ \d{x} \d{t} $. Using \eqref{uy} and the fact that $\nabla \psi \equiv 0$ on the support of $\phi$, we find
    \begin{align}\label{applying int-ineq for b and q}
      \ \int_{Q} \langle (b_1 - b_2), \nabla_{x} u_{j,1} \rangle \ u^*_{j,2} & = \int_{Q} \langle (b_1 - b_2),\nabla_{x} r_{j,1} \rangle \ r_{j,2} + \int_{Q} \langle (b_1 - b_2),\nabla_{x} r_{j,1} \rangle\ \phi.
    \end{align}
    We show that both the integrals in \eqref{applying int-ineq for b and q} converges to zero as $j \to \iy$. Arguing as in \eqref{bdo}, we observe 
    \begin{align*}
        \left\vert \int_{Q} \langle (b_1 - b_2),\nabla_{x} r_{j,1} \rangle \ r_{j,2} \right\vert 
        & \preceq \|r_{j,1}\|_{\ch} \|r_{j,2}\|_{\ch}
    \end{align*}
    and similarly 
    \begin{align*}
       \left\vert  \int_{Q} \langle (b_1 - b_2),\nabla_{x} r_{j,1} \rangle\ \phi\right\vert \preceq \|r_{j,2}\|_{\ch} \|\phi\|_{\ch}. 
    \end{align*}
    Therefore using \eqref{r00} we can conclude that \begin{equation}\label{lim1} \lim\limits_{j \to \iy} \int_{Q} \langle (b_1 - b_2), \nabla_{x} u_{j,1} \rangle \ u^*_{j,2} = 0. \end{equation} Using \eqref{lim1} in \eqref{int-ineq for determining b,q} we deduce 
     \begin{equation}\label{lim2} \lim\limits_{j \to \iy} \int_{Q} (q_1 - q_2) u_{j,1} u^*_{j,2} \ \d{x} \d{t} = 0.\end{equation}
       By following the proof of Theorem \ref{determining q}, we get that \eqref{lim2}  reduces to  
    \begin{align*}
        \int_{Q} (q_1 - q_2) \phi \ \d{x} \d{t} = 0.
    \end{align*}
    Since $\phi \in C_0^\iy(Q)$ is arbitrary, we thus have  $q_1 = q_2$ in $Q$. \\
    
    We now uniquely  determine the drift term.  We first choose some $\phi \in C_0^{\infty}(Q)$ and  then for $i=1,2$ and $k \in \{1,2,...,n\}$,  we  choose  $ \phi_{i,k} \in C_0^\infty (Q)$ in such a way that \begin{equation}\label{t0}\text{$\phi_{2,k} = \phi$ and $\phi_{1,k} = x_k$ on $\tn{supp}(\phi_{2,k})$.}\end{equation} Thanks to Theorem \ref{Runge for b and q}, we can find  $f^k_{j,l} \in C_0^\iy(W_l \times (-T,T))$ for $ k \in\{1,2,..,n\}, j \in \mb{N}$, $l = 1,2$ and solutions $\{u^k_{j, 1}, u^{*,k}_{j,2} \}$  to the forward and adjoint problems  associated to the exterior data $f^k_{j,l}$  such that
    \begin{align*}
        u^k_{j,1} - f^k_{j,1} = \phi_{1,k} + r^k_{j,1}, \qd \tn{ and } u^{*,k}_{j,2} - f^k_{j,2} = \phi_{2,k} + r^k_{j,2},
    \end{align*}
     where $\{r^{k}_{jl}\}$  satisfies the limiting condition in \eqref{r00} as $j \to \infty$.
   Now given $k \in \{1,2,...,n\}$,  using the identity in  Lemma \ref{int identity for b and q} along with the fact that $ q_1 = q_2 $ in $Q$, we have
    \begin{align*}
        0 = \int_Q \langle (b_1 - b_2), \nabla_{x} u^k_{1,j} \rangle \ u^{*,k}_{2,j} & = \int_Q \langle (b_1 - b_2),\nabla_{x} \phi_{1,k} \rangle \ \phi_{2,k} + \int_Q \langle (b_1 - b_2), \nabla_{x} \phi_{1,k} \rangle\ r^k_{j,2} \\
        & + \int_Q \langle (b_1 - b_2), \nabla_{x} r^k_{j,1} \rangle \ \phi_{2,k}+  \int_Q \langle (b_1 - b_2), \nabla_{x} r^k_{j,1}\rangle \ r^k_{j,2}.
    \end{align*}
    Using $\lim\limits_{j \to \iy} \|r^k_{j,l}\|_{\ch} = 0$ for $ l= 1,2$ and $k\in\{1,2,..,n\}$, we notice as before that
    \begin{align*}
        \lim_{j\to\iy} \int_Q \langle (b_1 - b_2), \nabla_{x} \phi_{1,k} \rangle \ r^k_{j,2} = \lim_{j\to\iy} \int_Q \langle (b_1 - b_2), \nabla_{x} r^k_{j,1} \rangle \ \phi_{2,k} = \lim_{j\to\iy} \int_Q \langle(b_1 - b_2),\nabla_{x} r^k_{j,1} \rangle\ r^k_{j,2}= 0
    \end{align*}
   which then implies
    \begin{equation}\label{t01}\int_Q \langle (b_1 - b_2), \nabla_{x} \phi_{1,k}  \rangle\ \phi_{2,k} \ \d x\d t  = 0.\end{equation}
    Now using \eqref{t0} in \eqref{t01}  we find
    \begin{align*}
        \int_Q (b_1 - b_2)_k(x,t) \phi(x,t) \ \d x\d t = 0, \ \tn{ for } k\in\{1,2,..,n\}.
    \end{align*}
Since $\phi \in C_0^\iy(Q)$ is arbitrary, we can thus  infer that  $b_1 = b_2$ in $Q$.  \end{proof}


\begin{thebibliography}{10}


\bibitem{Ar}
D.~G. Aronson.
\newblock Bounds for the fundamental solution of a parabolic equation.
\newblock {\em Bull. Amer. Math. Soc.}, 73:890--896, 1967.



	

\bibitem{ABDG}
Vedansh Arya, Agnid Banerjee, Donatella Danielli, and Nicola Garofalo.
\newblock Space-like strong unique continuation for some fractional parabolic
  equations, { J. Funct. Anal.}, {284} (2023), no. 1, Paper No. 109723, pp. 38.

\bibitem{AP}
Kari Astala and Lassi P\"{a}iv\"{a}rinta.
\newblock Calder\'{o}n's inverse conductivity problem in the plane.
\newblock {\em Ann. of Math. (2)}, 163(1):265--299, 2006.

\bibitem{AEN}
Pascal Auscher, Moritz Egert, and Kaj Nystr\"{o}m.
\newblock {$\rm L^2$} well-posedness of boundary value problems for parabolic
  systems with measurable coefficients.
\newblock {\em J. Eur. Math. Soc. (JEMS)}, 22(9):2943--3058, 2020.

\bibitem{AHLMT}
Pascal Auscher, Steve Hofmann, Michael Lacey, Alan McIntosh and Ph.
  Tchamitchian.
\newblock The solution of the {K}ato square root problem for second order
  elliptic operators on {${\Bbb R}^n$}.
\newblock {\em Ann. of Math. (2)}, 156(2):633--654, 2002.
\bibitem{AHLT}
Pascal Auscher, Steve Hofmann, John Lewis and  Ph. Tchamitchian.
\newblock Extrapolation of Carleson measures and the analyticity of Kato's square-root operators.
\newblock {\em Acta Math. (187)}, 161-190, 2001.

\bibitem{Avdonin_Seidman}
Sergei Avdonin and Thomas~I. Seidman.
\newblock Identification of {$q(x)$} in {$u_t=\Delta u-qu$} from boundary
  observations.
\newblock {\em SIAM J. Control Optim.}, 33(4):1247--1255, 1995.


\bibitem{AT}
A. Audrito \& S. Terracini, \emph{On the nodal set of solutions to a class of nonlocal parabolic reaction-diffusion equations},  arXiv:1807.10135.

\bibitem{AV}
Giovanni Alessandrini and Sergio Vessella.
\newblock Remark on the strong unique continuation property for parabolic
  operators.
\newblock {\em Proc. Amer. Math. Soc.}, 132(2):499--501, 2004.

\bibitem{BDGP}
Agnid Banerjee, Donatella Danielli, Nicola Garofalo, and Arshak Petrosyan.
\newblock The structure of the singular set in the thin obstacle problem for
  degenerate parabolic equations.
\newblock {\em Calc. Var. Partial Differential Equations}, 60(3):Paper No. 91,
  52, 2021.

\bibitem{BG}
Agnid Banerjee and Nicola Garofalo.
\newblock Monotonicity of generalized frequencies and the strong unique
  continuation property for fractional parabolic equations.
\newblock {\em Adv. Math.}, 336:149--241, 2018.

\bibitem{BG1}
Agnid Banerjee and Nicola Garofalo.
\newblock On the space-like analyticity in the extension problem for nonlocal
  parabolic equations, { Proc. Amer. Math. Soc. } {151}~ (2023), no. 3, 1235--1246.

\bibitem{BGMN}
Agnid Banerjee, Nicola Garofalo, Isidro~H. Munive, and Duy-Minh Nhieu.
\newblock The {H}arnack inequality for a class of nonlocal parabolic equations.
\newblock {\em Commun. Contemp. Math.}, 23(6):Paper No. 2050050, 23, 2021.

\bibitem{BGU}
S.~Bhattacharyya, T.~Ghosh, and G.~Uhlmann.
\newblock Inverse problems for the fractional-{L}aplacian with lower order
  non-local perturbations.
\newblock {\em Trans. Amer. Math. Soc.}, 374(5):3053--3075, 2021.

\bibitem{BS}
Animesh Biswas and Pablo~Ra\'{u}l Stinga.
\newblock Regularity estimates for nonlocal space-time master equations in
  bounded domains.
\newblock {\em J. Evol. Equ.}, 21(1):503--565, 2021.

\bibitem{CS}
Luis Caffarelli and Luis Silvestre.
\newblock An extension problem related to the fractional {L}aplacian.
\newblock {\em Comm. Partial Differential Equations}, 32(7-9):1245--1260, 2007.

\bibitem{Calderon_Paper}
Alberto~P. Calder\'{o}n.
\newblock On an inverse boundary value problem.
\newblock {\em Comput. Appl. Math.}, 25(2-3):133--138, 2006.

\bibitem{CLR}
Mihajlo Ceki\'{c}, Yi-Hsuan Lin, and Angkana R\"{u}land.
\newblock The {C}alder\'{o}n problem for the fractional {S}chr\"{o}dinger
  equation with drift.
\newblock {\em Calc. Var. Partial Differential Equations}, 59(3):Paper No. 91,
  46, 2020.

\bibitem{Ch}
Sagun Chanillo.
\newblock A problem in electrical prospection and an {$n$}-dimensional
  {B}org-{L}evinson theorem.
\newblock {\em Proc. Amer. Math. Soc.}, 108(3):761--767, 1990.

\bibitem{DSV}
Serena Dipierro, Ovidiu Savin, and Enrico Valdinoci.
\newblock All functions are locally {$s$}-harmonic up to a small error.
\newblock {\em J. Eur. Math. Soc. (JEMS)}, 19(4):957--966, 2017.

\bibitem{DSV1}
Serena Dipierro, Ovidiu Savin, and Enrico Valdinoci.
\newblock Local approximation of arbitrary functions by solutions of nonlocal
  equations.
\newblock {\em J. Geom. Anal.}, 29(2):1428--1455, 2019.

\bibitem{EFV_2006}
L.~Escauriaza, F.~J. Fern\'{a}ndez, and S.~Vessella.
\newblock Doubling properties of caloric functions.
\newblock {\em Appl. Anal.}, 85(1-3):205--223, 2006.

\bibitem{EF_2003}
Luis Escauriaza and Francisco~Javier Fern\'{a}ndez.
\newblock Unique continuation for parabolic operators.
\newblock {\em Ark. Mat.}, 41(1):35--60, 2003.

\bibitem{FF}
Mouhamed~Moustapha Fall and Veronica Felli.
\newblock Unique continuation property and local asymptotics of solutions to
  fractional elliptic equations.
\newblock {\em Comm. Partial Differential Equations}, 39(2):354--397, 2014.

\bibitem{Gcm}
Nicola Garofalo.
\newblock Two classical properties of the {B}essel quotient {$I_{\nu+1}/I_\nu$}
  and their implications in pde's.
\newblock In {\em Advances in harmonic analysis and partial differential
  equations}, volume 748 of {\em Contemp. Math.}, pages 57--97. Amer. Math.
  Soc., [Providence], RI, [2020] \copyright 2020.

\bibitem{GL1}
Nicola Garofalo and Fang-Hua Lin.
\newblock Monotonicity properties of variational integrals, {$A_p$} weights and
  unique continuation.
\newblock {\em Indiana Univ. Math. J.}, 35(2):245--268, 1986.

\bibitem{GL2}
Nicola Garofalo and Fang-Hua Lin.
\newblock Unique continuation for elliptic operators: a geometric-variational
  approach.
\newblock {\em Comm. Pure Appl. Math.}, 40(3):347--366, 1987.

\bibitem{GLX}
Tuhin Ghosh, Yi-Hsuan Lin, and Jingni Xiao.
\newblock The {C}alder\'{o}n problem for variable coefficients nonlocal
  elliptic operators.
\newblock {\em Comm. Partial Differential Equations}, 42(12):1923--1961, 2017.

\bibitem{GSU}
Tuhin Ghosh, Mikko Salo, and Gunther Uhlmann.
\newblock The {C}alder\'{o}n problem for the fractional {S}chr\"{o}dinger
  equation.
\newblock {\em Anal. PDE}, 13(2):455--475, 2020.

\bibitem{GR}
I.~S. Gradshteyn and I.~M. Ryzhik.
\newblock {\em Table of integrals, series, and products}.
\newblock Elsevier/Academic Press, Amsterdam, seventh edition, 2007.
\newblock Translated from the Russian, Translation edited and with a preface by
  Alan Jeffrey and Daniel Zwillinger, With one CD-ROM (Windows, Macintosh and
  UNIX).

\bibitem{GO_2014}
Loukas Grafakos and Seungly Oh.
\newblock The {K}ato-{P}once inequality.
\newblock {\em Comm. Partial Differential Equations}, 39(6):1128--1157, 2014.


\bibitem{Isakov}
Victor Isakov.
\newblock Completeness of products of solutions and some inverse problems for
  {PDE}.
\newblock {\em J. Differential Equations}, 92(2):305--316, 1991.

\bibitem{Jr1}
B.~Frank Jones, Jr.
\newblock Lipschitz spaces and the heat equation.
\newblock {\em J. Math. Mech.}, 18:379--409, 1968/69.

\bibitem{KEN}
Carlos~E. Kenig, Johannes Sj\"{o}strand, and Gunther Uhlmann.
\newblock The {C}alder\'{o}n problem with partial data.
\newblock {\em Ann. of Math. (2)}, 165(2):567--591, 2007.

\bibitem{Ki}
T.~Kilpel\"{a}inen.
\newblock Smooth approximation in weighted {S}obolev spaces.
\newblock {\em Comment. Math. Univ. Carolin.}, 38(1):29--35, 1997.

\bibitem{LLR}
Ru-Yu Lai, Yi-Hsuan Lin, and Angkana R\"{u}land.
\newblock The {C}alder\'{o}n problem for a space-time fractional parabolic
  equation.
\newblock {\em SIAM J. Math. Anal.}, 52(3):2655--2688, 2020.

\bibitem{Le}
N.~N. Lebedev.
\newblock {\em Special functions and their applications}.
\newblock Dover Publications, Inc., New York, 1972.
\newblock Revised edition, translated from the Russian and edited by Richard A.
  Silverman, Unabridged and corrected republication.

\bibitem{LL}
Li~Li.
\newblock A fractional parabolic inverse problem involving a time-dependent
  magnetic potential.
\newblock {\em SIAM J. Math. Anal.}, 53(1):435--452, 2021.

\bibitem{Li}
Gary~M. Lieberman.
\newblock {\em Second order parabolic differential equations}.
\newblock World Scientific Publishing Co., Inc., River Edge, NJ, 1996.

\bibitem{LM}
J.-L. Lions and E.~Magenes.
\newblock {\em Non-homogeneous boundary value problems and applications. {V}ol.
  {I}}.
\newblock Die Grundlehren der mathematischen Wissenschaften, Band 181.
  Springer-Verlag, New York-Heidelberg, 1972.
\newblock Translated from the French by P. Kenneth.

\bibitem{LN}
M.~Litsgård and K.~Nyström.
\newblock On local regularity estimates for fractional powers of parabolic operators with time-dependent measurable coefficients,  J. Evol. Equ. 23, 3 (2023).

\bibitem{Nachman_annals}
Adrian~I. Nachman.
\newblock Global uniqueness for a two-dimensional inverse boundary value
  problem.
\newblock {\em Ann. of Math. (2)}, 143(1):71--96, 1996.

\bibitem{NSU}
Gen Nakamura, Zi~Qi Sun, and Gunther Uhlmann.
\newblock Global identifiability for an inverse problem for the
  {S}chr\"{o}dinger equation in a magnetic field.
\newblock {\em Math. Ann.}, 303(3):377--388, 1995.

\bibitem{NS}
K.~Nystr\"{o}m and O.~Sande.
\newblock Extension properties and boundary estimates for a fractional heat
  operator.
\newblock {\em Nonlinear Anal.}, 140:29--37, 2016.

\bibitem{Po}
Chi-Cheung Poon.
\newblock Unique continuation for parabolic equations.
\newblock {\em Comm. Partial Differential Equations}, 21(3-4):521--539, 1996.

\bibitem{Ru}
A.~R\"uland.
\newblock {On some rigidity properties in PDEs}, 2013.
\newblock {Dissertation, Rheinischen Friedrich-Wilhelms-Universit\"at Bonn}.

\bibitem{Ru2}
Angkana R\"{u}land.
\newblock Unique continuation for fractional {S}chr\"{o}dinger equations with
  rough potentials.
\newblock {\em Comm. Partial Differential Equations}, 40(1):77--114, 2015.

\bibitem{RS}
Angkana R\"{u}land and Mikko Salo.
\newblock Quantitative approximation properties for the fractional heat
  equation.
\newblock {\em Math. Control Relat. Fields}, 10(1):1--26, 2020.

\bibitem{Samko}
Stefan~G. Samko.
\newblock {\em Hypersingular integrals and their applications}, volume~5 of
  {\em Analytical Methods and Special Functions}.
\newblock Taylor \& Francis Group, London, 2002.

\bibitem{ST}
Pablo~Ra\'{u}l Stinga and Jos\'{e}~L. Torrea.
\newblock Regularity theory and extension problem for fractional nonlocal
  parabolic equations and the master equation.
\newblock {\em SIAM J. Math. Anal.}, 49(5):3893--3924, 2017.

\bibitem{Sun_Paper}
Zi~Qi Sun.
\newblock An inverse boundary value problem for {S}chr\"{o}dinger operators
  with vector potentials.
\newblock {\em Trans. Amer. Math. Soc.}, 338(2):953--969, 1993.

\bibitem{SYL}
John Sylvester and Gunther Uhlmann.
\newblock A global uniqueness theorem for an inverse boundary value problem.
\newblock {\em Ann. of Math. (2)}, 125(1):153--169, 1987.

\end{thebibliography}
\end{document}